\theoremstyle{plain}
\newtheorem{Lemma}{Lemma}   
\newtheorem{prop}{Proposition}   
\theoremstyle{definition}
\theoremstyle{newremark}
\newtheorem{remark}{Remark}
\newtheorem{conjecture}{Conjecture}
\newcommand{\s}{\sigma}
\newcommand{\x}{\mathbf{x}}
\renewcommand{\a}{\alpha}
\newcommand{\e}{\epsilon}
\renewcommand{\d}{\delta}
\newcommand{\slim}{\sum\limits_{k=1}^\infty}
\newcommand{\slimj}{\sum\limits_{j=i}^\infty}
\newcommand{\slimo}{\sum\limits_{k=0}^\infty}
\title{Linear-Quadratic Stochastic Differential Games on  Directed Chain Networks}
\author{
\and  Yichen Feng\thanks{Department of Statistics and Applied Probability, South Hall, University of California, Santa Barbara, CA 93106, USA (E-mail: \href{mailto:feng@pstat.ucsb.edu}{feng@pstat.ucsb.edu}).} 
  \and Jean-Pierre Fouque\thanks{Department of Statistics and Applied Probability, South Hall, University of California, Santa Barbara, CA 93106, USA (E-mail: \href{mailto:fouque@pstat.ucsb.edu}{fouque@pstat.ucsb.edu}). Work supported by NSF grant DMS-1814091.} 
 \and Tomoyuki Ichiba\thanks{Department of Statistics and Applied Probability, South Hall, University of California, Santa Barbara, CA 93106, USA (E-mail: \href{mailto:ichiba@pstat.ucsb.edu}{ichiba@pstat.ucsb.edu}). Work supported by NSF grant DMS-1615229 and DMS-2008427.} 
  }
\date{\vspace{-5ex}}
\begin{document}

\maketitle

\pagenumbering{arabic}

\begin{abstract}
 We study  linear-quadratic stochastic differential games on  directed chains inspired by the directed chain stochastic differential equations introduced by Detering, Fouque \& Ichiba \cite{Nils-JP-Ichiba2018DirectedChain}. We solve explicitly for Nash equilibria with a finite number of players and we study  more general finite-player games with a mixture of both directed chain interaction and mean field interaction. We investigate and compare the corresponding games in the limit when the number of players tends to infinity. 
 The limit is characterized by Catalan functions and the dynamics under equilibrium is an infinite-dimensional Gaussian process described by 
 a Catalan Markov chain, with or without the presence of mean field interaction.
\end{abstract}

\noindent{{{\it Key Words and Phrases:} Linear-quadratic stochastic games, directed chain network, Nash equilibrium, Catalan functions, Catalan Markov chain, mean field games.}

\noindent{\it AMS 2010 Subject Classifications:} 91A15, 60H30

\section{Introduction}
Stochastic differential games on networks is a broad area. There are two extreme situations. On one hand, we can consider a fully connected network with interaction of  mean-field  type. When the number of players goes to infinity, this kind of game can be approximated by a mean field game. The mean field convergence problem has been discussed widely, for instance in Lacker \cite{ConvNashtoMFGlimit}. Other networks and games have been proposed and studied. For example, Delarue \cite{delarue:hal-01457409} investigates an example of a game with a large number of players in mean-field interaction when the graph connection between them is of Erdos-R\'enyi type, and Lacker,  Ramanan \& Wu \cite{LargeNetworkof_InteractingDiffusions} study the limit of an interacting diffusive particle system on a large sparse interaction graph with finite average degree.
On the other hand, we can consider a very structured network such as a one-dimensional directed chain which has been studied in Detering, Fouque \& Ichiba \cite{Nils-JP-Ichiba2018DirectedChain} without the game aspect. It is a complete opposite to mean field games since, on a directed chain network, each player interacts with its neighbor in a given direction. In this paper, we introduce a game aspect of the directed chain and identify  Nash equilibria. We also consider the limit when the number of players goes to infinity.

Interestingly, the equilibrium dynamics on the network discussed in this paper turns out to be different from the dynamics suggested in \cite{Nils-JP-Ichiba2018DirectedChain}, in particular, with long time variance behavior. The equilibrium dynamics for the infinite-player game is described by a Catalan Markov chain introduced in this paper.

Our first goal is to consider a  game on a directed chain network and to find its Nash equilibrium. We focus on open-loop Nash equilibria. We want to understand how the structure of the network affects this Nash equilibrium. We propose three directed chain networks shown in Figures \ref{fig:chain} and \ref{fig:ring}. Starting from a finite directed chain, we  also discuss a periodic directed chain in a ring structure and we  compare with the game on a infinite directed chain network. 

The paper is organized as follows. In section \ref{section 2}, we propose a finite-player game model on a directed chain and construct an open-loop Nash equilibrium. We discuss  general boundary conditions as well as two special cases to illustrate that the boundary condition  actually affects weakly the Nash equilibrium. We also observe that for this type of games open-loop and closed-loop Nash equilibria coincide. Section \ref{section 3} is devoted to the analysis of an infinite-player stochastic differential game on a directed chain. We try to find an open-loop Nash equilibrium and get a similar Riccati system to that of the finite-player game. The solutions are called Catalan functions and we use them to build a Catalan Markov chain, discussed in section \ref{section 4}. We find that its long-time asymptotic variance and covariance are finite.  In sections \ref{section 5} and \ref{section 6}, we discuss the finite-player and infinite-player games for a mixed system including both a directed chain interaction and a mean-field interaction. We can adjust the model to be a purely mean field game (studied in \cite{CarmonaFouqueSunSystemicRisk}), or a purely directed chain game, or a mixture of the two by introducing a tuning parameter $u\in [0,1]$. We repeat the same steps as sections \ref{section 2},  \ref{section 3}, and \ref{section 4} to find the Nash equilibria and we construct a generalized Catalan Markov chain describing the two effects. We find that the long-time asymptotic variance of the process with the purely directed chain interaction is finite, which is different from the case with mean-field interaction as shown in Table 1 in \cite{Nils-JP-Ichiba2018DirectedChain}. In section \ref{section 7},  we propose a finite-player periodic directed chain game and we construct an open-loop Nash equilibrium. We conjecture that its infinite-player limit is the same as the one found for other boundary condition. This conjecture is supported by numerical results. In Section \ref{section-tree-model}, we extend our results to tree structures with fixed finite number of descendants. Section \ref{section 8} gives a conclusion and  open problems. Appendix \ref{Appendix} includes some technical proofs and discussions.

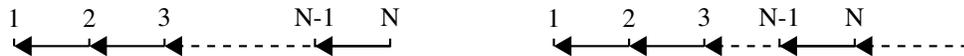
\begin{figure}[!h]
	\centering
	\begin{minipage}[t]{4cm}
		\centering
		\begin{tikzpicture}[scale=1,mydashed/.style={dashed,dash phase=3pt},>={triangle 60}]
\draw[thick] (0,0.1) -- (0,0) node[label=above:1] {};
\draw[thick] (1,0.1) -- (1,0) node[label=above:2] {};
\draw[thick] (2,0.1) -- (2,0) node[label=above:3] {};
\draw[thick] (4,0.1) -- (4,0)--(5,0) -- (5,0.1);
\draw[thick,<-] (0,0) -- (1,0);
\draw[thick,<-] (1,0) -- (2,0);
\draw[thick,mydashed,<-] (2,0) -- (4,0) node[label=above:{N-1}] {};
\draw[thick,<-] (4,0) -- (5,0) node[label=above:N] {};
\end{tikzpicture}
	\end{minipage}
	\hspace{3cm}
	\begin{minipage}[t]{4cm}
		\centering
		\begin{tikzpicture}[mydashed/.style={dashed,dash phase=3pt},>={triangle 60}]
\draw[thick] (0,0.1) -- (0,0) node[label=above:1] {};
\draw[thick] (1,0.1) -- (1,0) node[label=above:2] {};
\draw[thick] (2,0.1) -- (2,0) node[label=above:3] {};
\draw[thick] (3,0.1) -- (3,0)--(4,0) -- (4,0.1);
\draw[thick,<-] (0,0) -- (1,0);
\draw[thick,<-] (1,0) -- (2,0);
\draw[thick,mydashed,<-] (2,0) -- (3,0) node[label=above:{N-1}] {};
\draw[thick,<-] (3,0) -- (4,0) node[label=above:N] {};
\draw[thick,mydashed,<-] (4,0) -- (5.5,0);
\end{tikzpicture}
	\end{minipage}
	\caption{Finite Directed Chain (Left) and Infinite Directed Chain (Right)}
	\label{fig:chain}
\end{figure}

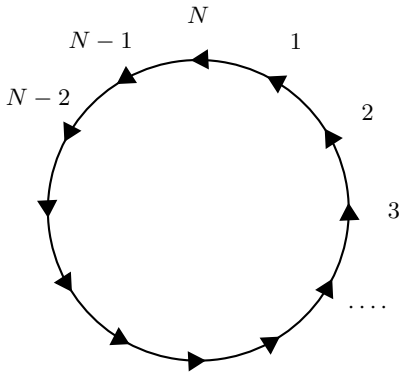
\begin{figure}[!h]
\centering
 \begin{tikzpicture}[scale=2,cap=round,>={triangle 60}]
\draw[thick, decoration={markings, mark=between positions 0.008 and 1 step {1/12} with {\arrow{>}}},               
        postaction={decorate}
        ]
        (0,0) circle (1cm);
          \foreach \x in {0,30,...,360} {
                \filldraw[black] (\x:1cm) circle(0.3pt);
        }
    
        \foreach \x/\xtext in {
            0/3,
            30/2,
            60/1,
            90/N, 
            145/{N-2},
            120/{N-1},
            330/{\cdots\cdot}}
                \draw (\x:1.30cm) node[fill=white] {\small $\xtext$};

\end{tikzpicture}
\caption{Periodic Directed Chain}
\label{fig:ring}
\end{figure}

\section{N-Player Directed Chain Game}\label{section 2}
\subsection{Setup and Assumptions}
We consider a stochastic game in continuous time, involving $N$ players indexed from $1$ to $N$. Each player $i$ is controlling its own, real-valued private state $X_t^i$  by taking a real-valued action $\alpha_t^i$ at time $t\in [0,T]$. The dynamics of the states of the $N$ individual players are given by $N$ stochastic differential equations of the form:
 \begin{equation}\label{eq:1}
    dX_t^i=\a_t^i {\mathrm d}t+\s {\mathrm d}W_t^i,\quad  i=1,\cdots,N,
\end{equation}
where $0\leq t\leq T$ and $ (W_t^i)_{0\leq t\leq T},\, i = 1, \cdots, N$ are independent standard Brownian motions. Here and throughout the paper, the argument in the superscript represents index or label but not the power. For simplicity, we assume that the diffusion is one-dimensional and the diffusion coefficients are constant and identical denoted by $\sigma>0$. The drift coefficients $\alpha^i$'s are adapted to the filtration of the Brownian motions and satisfy $\mathbbm{E}[\int_0^T |\alpha_t^i|^2 dt]<\infty$ for $i = 1, \ldots , N$. 
The system starts at time $t = 0$ from $i.i.d.$ square-integrable random variables $X_0^i = \xi_i$, independent of the Brownian motions and, without loss of generality,  we assume ${\mathbbm E}(\xi_i) = 0$ for $i = 1, \ldots , N$. 

In this model, among the first $N-1$ players, each player $i$ chooses its own strategy $\alpha^i$, in order to minimize its objective function given by:
\begin{equation} \label{eq: objfunc1}
1\leq   i\leq N-1:\quad\quad J^i(\a^1,\cdots,\a^N)=\mathbbm{E} \left\{\int_0^T \left(\frac{1}{2}(\a^i_t)^2+\frac{\e}{2}(X_t^{i+1}-X_t^{i})^2\right){\mathrm d}t+\frac{c}{2}(X_T^{i+1}-X_T^{i})^2\right\},
\end{equation}
for some constants $\e>0$ and $c\geq 0$. The running cost and the terminal cost functions are defined by 
\begin{equation} \label{eq: fg1}
f^i(x,\a^i)=\frac{1}{2}(\a^i)^2+\frac{\e}{2}(x^{i+1}-x^{i})^2, \quad \text{ and } \quad g^i(x)=\frac{c}{2}(x^{i+1}-x^{i})^2, 
\end{equation} 
respectively for $x = (x^{1}, \ldots , x^{N}) \in \mathbb R^{N}$ and $\alpha^{i} \in \mathbb R$, $i = 1, \ldots , N$. This is a \emph{Linear-Quadratic} differential game on a directed chain network, since the state $X^{i}$ of each player $i$ interacts only with $X^{i+1}$ through the quadratic cost functions for $i =1, \ldots , N-1$. The system is completed by describing the behavior of player $N$ which will be done in the following section, when we discuss the boundary condition of the system.

\subsection{Open-Loop Nash Equilibrium} 

In this section, we search for an open-loop Nash equilibrium of the system of $N$ players  
among the admissible strategies $\{\alpha_t^i,i=1,\cdots,N, t \in [0, T] \}$ and we study the effect of boundary conditions induced by the behavior of player $N$. We will discuss a general boundary condition for the game first in Section \ref{sec: 2.2.1} and then show two particular choices in Section \ref{BC-attractedto0} 
and \ref{BC-notattractedto0}. 
We construct the equilibrium by the Pontryagin stochastic maximum principle.

\subsubsection{General Boundary Condition} \label{sec: 2.2.1}

We consider a setup with a general boundary condition for the directed chain where the last player $N$ does not depend on the other players.
The expected cost functional for player $N$ is defined by:
\begin{equation} \label{eq: JNalphaN} 
J^N(\alpha^N)=\mathbbm{E} \left\{\int_0^T \left(\frac{1}{2}(\alpha^N_t)^2+q_2(X_t^N)\right){\mathrm d}t+Q_2(X_T^N)\right\}, 
\end{equation}
\begin{equation}  \label{eq: JNalphaN2}
\text{ where }  \quad q_2(x)=\frac{a_1}{2}(x-m)^2+a_2, \quad \text{ and } \quad  Q_2(x)=\frac{c_1}{2}(x-m)^2+c_2, \quad x \in \mathbb R 
\end{equation} 
are non-degenerate convex quadratic functions in $x$, where $a_1,a_2,m,c_1,c_2$ are some constants with $a_1>0$ and $c_1>0$. The running cost function is defined by $f^N(x, \alpha^N)=\frac{1}{2}(\alpha^N)^2+q_2(x)$ and the terminal cost function is defined by $g^N(x)=Q_2(x)$. This can be seen as a control problem for the player $N$ and we assume its state is attracted to some constant level $m \in \mathbb R$.

The Hamiltonian for player $i\leq N-1$ is given by:
\[
    H^i(x^1,\cdots,x^N,y^{i,1},\cdots,y^{i,N},\alpha^1,\cdots,\alpha^N)=\sum\limits_ {k=1}^N \alpha^k y^{i,k}+\frac{1}{2}(\alpha^i)^2+\frac{\epsilon}{2}(x^{i+1}-x^i)^2,
\]
while the Hamiltonian for player $N$ is:
\[
H^N(x^1,\cdots,x^N,y^{i,1},\cdots,y^{i,N},\alpha^1,\cdots,\alpha^N)=\sum\limits_ {k=1}^N \alpha^k y^{i,k}+\frac{1}{2}(\alpha^N)^2+\frac{a_1}{2}(x^N-m)^2+a_2
\]
for $x^{k}, y^{i,k}, \alpha^{k} \in \mathbb R$, $i,k =1, \ldots , N$. 
For $i = 1, \ldots , N$ the value of $\alpha^i$ minimizing the Hamiltonian $H^{i}(\cdot)$ with respect to $\alpha^i$, when all the other variables including $\alpha^j$ for $j\neq i$ are fixed, is given by the first order condition 
\[
    \partial_{\alpha^i}H^i=y^{i,i}+\alpha^i=0 \quad \text{leading to the choice:} \quad  \Hat{\alpha}^i=-y^{i,i}.
\]

The adjoint processes $Y_t^i=(Y_t^{i,j};j=1,\cdots,N)$ and $Z_t^i=(Z_t^{i,j,k};j=1,\cdots,N,k=1,\cdots,N)$ for $i=1,\cdots,N$ are defined as the solutions of the system of  backward stochastic differential equations (BSDEs): for $j = 1, \ldots , N$
\begin{equation} 
\begin{array}{ll}
  i\leq N-1:&  \left\{
  \begin{array}{ll}
    {\mathrm d}Y_t^{i,j}
    &=-\partial_{x^j}H^i(X_t,Y_t^i,\alpha_t){\mathrm d}t+\sum\limits_ {k=1}^N Z_t^{i,j,k}{\mathrm d}W_t^k\\
    &=-\epsilon (X_t^{i+1}-X_t^i)(\delta_{i+1,j}-\delta_{i,j}){\mathrm d}t+\sum\limits_ {k=1}^N Z_t^{i,j,k}{\mathrm d}W_t^k, \quad 0 \le t \le T, \\
    Y_T^{i,j}&=\partial_{x^j}g_i(X_T)=c(X_T^{i+1}-X_T^i)(\delta_{i+1,j}-\delta_{i,j});
      \end{array}
    \right.\\
    
  i=N: & \left\{
  \begin{array}{ll}
    {\mathrm d}Y_t^{N,j} &=-a_1 (X_t^N-m) \delta_{N,j}{\mathrm d}t+\sum\limits_ {k=1}^N Z_t^{N,j,k}{\mathrm d}W_t^k, \quad  0 \le t \le T , \\
    Y_T^{N,j}&=c_1 (X_T^N-m) \delta_{N,j}.
    \end{array}
    \right.
  \end{array}
\end{equation}
for $0 \le t \le T$. Particularly, for $j=i$, $j = i+1$, it becomes:
\begin{equation} \label{BSDE-general}
\left\{
  \begin{array}{lll}
    & {\mathrm d} Y_t^{i,i} =\epsilon (X_t^{i+1}-X_t^i){\mathrm d}t+\sum\limits_ {k=1}^N Z_t^{i,i,k}{\mathrm d}W_t^k,\quad &Y_T^{i,i}=-c(X_T^{i+1}-X_T^i) ,\quad i\leq N-1, \\
    & {\mathrm d} Y_{t}^{i,i+1} \, =\,  - \epsilon (X_t^{i+1}-X_t^i){\mathrm d}t+\sum\limits_ {k=1}^N Z_t^{i,i+1,k}{\mathrm d}W_t^k,\quad &Y_T^{i,i+1}= c(X_T^{i+1}-X_T^i) ,\quad i\leq N-1, \\
    & {\mathrm d}Y_t^{N,N} =-a_1 (X_t^N-m){\mathrm d}t+\sum\limits_ {k=1}^N Z_t^{N,N,k}{\mathrm d}W_t^k,\quad &Y_T^{N,N}=c_1(X_T^{N}-m).
 \end{array}
\right.
\end{equation}
Thus, because of $\, Y_{T}^{i,i} \, =\,  - Y_{T}^{i,i+1}\,$ and of the form of dynamics, it is reduced to  
\begin{equation}
Y_{t}^{i, i} = - Y_{t}^{i,i+1} , \quad Z_{t}^{i,i,k} = - Z_{t}^{i,i+1,k} \, 
\end{equation}
for $ i \le N-1, \, k \le N, \, 0 \le t \le T  $. For $j \neq i , i+1$, $i \le N-1$, it becomes: 
${\mathrm d}Y_{t}^{i,j} \, =\,  \sum_{k=1}^{N} Z_{t}^{i,j,k} {\mathrm d} W_{t}^{k} $, $Y_{T}^{i,j} \, =\,  0 $, 
and hence, the solution is 
\begin{equation} \label{eq: YijZijk = 0}
\, Y_{t}^{i,j} \equiv 0 \,, \quad \, Z_{t}^{i,j,k} \equiv 0 , \quad \, 0 \le t \le T\,. 
\end{equation}

Considering the BSDE system (\ref{BSDE-general}) and its terminal condition, we  make the ansatz: 
\begin{equation}\label{ansatz-general}
    Y_t^{i,i}=\sum\limits_{j=i}^{N-1} \phi_t^{N,i,j}X_t^j+ \underbrace{(\phi_t^{N,i,N}X_t^N+\psi_t^{N,i})}_\text{affine in $X^N$, depending on B.C.}=\sum\limits_{j=i}^N \phi_t^{N,i,j}X_t^j+\psi_t^{N,i},
\end{equation}
for some deterministic scalar functions $\phi_t$ (depending on $N$) satisfying the terminal conditions: for $1\leq i\leq N-1$, $\phi_T^{N,i,i}=c,\phi_T^{N,i,i+1}=-c,\phi_T^{N,i,j}=0 $ for $ j\geq i+2$, $\psi_T^{N,i}=0$; and $\phi_T^{N,N,N}=c_1$, $\psi_T^{N,N}=-c_1m$.
With this ansatz, the optimal strategy $ \hat{\alpha}_{\cdot} $ and the controlled forward equation for $X_{\cdot}$ in (\ref{eq:1}) become 
\begin{equation} \label{eq: OL-Nash1}
    \left\{
  \begin{array}{ll}
    &\hat{\alpha}_t^i=-Y_t^{i,i}=-\big(\sum\limits_{j=i}^N \phi_t^{N,i,j}X_t^j+\psi_t^{N,i}\big),\\
    & {\mathrm d}X_t^j=-\big(\sum\limits_{k=j}^N \phi_t^{N,j,k}X_t^k+\psi_t^{N,j}\big) {\mathrm d}t+\sigma {\mathrm d}W_t^j.
  \end{array}
\right.
\end{equation}
Differentiating the ansatz (\ref{ansatz-general}) and substituting (\ref{eq: OL-Nash1}) leads to:
\begin{equation}\label{ito-general}
  \begin{array}{ll}
    {\mathrm d}Y_t^{i,i}&=\sum\limits_{j=i}^N[X_t^j\Dot{\phi}_t^{N,i,j} {\mathrm d}t+\phi_t^{N,i,j}dX_t^j]+\dot{\psi}_t^{N,i} {\mathrm d}t\\
   &=\big\{ \sum\limits_{k=i}^N \big(\Dot{\phi}_t^{N,i,k}- \sum\limits_{j=i}^k \phi_t^{N,i,j}\phi_t^{N,j,k} \big) X_t^k+\big[ \dot{\psi}_t^{N,i} -\sum\limits_{j=i}^N \psi_t^{N,j}\phi_t^{N,i,j} \big] \big\}{\mathrm d}t
   +\sigma\sum\limits_{k=i}^N \phi_t^{N,i,k}{\mathrm d}W_t^k.
  \end{array}
\end{equation}
Here $\dot{\phi}_{t}$ represents the time derivative of $\phi_{t}$. Comparing the martingale parts and drifts of two It\^o's decompositions  (\ref{BSDE-general}) and (\ref{ito-general}) of $Y_t^{i,i}$, 
the martingale terms give the deterministic (and therefore adapted) processes 
$Z_t^{i,i,k}$:
\begin{equation}\label{martingale-general}
    Z_t^{i,i,k}=0 \quad \text{ for } \quad k<i, \quad \text{ and }\ \quad Z_t^{i,i,k}=\sigma\phi_t^{N,i,k} \quad \text{ for } \quad k\geq i.
\end{equation}

Moreover, the drift terms show that the functions $\phi_t^{N,\cdot,\cdot}$ and $\psi_t^{N,\cdot}$ must satisfy the system of Riccati equations :\\
for $i\leq N-1$,
\begin{equation}\label{finite-chain-riccati}
\begin{array}{rll}
     \Dot{\phi}_t^{N,i,i}&=\phi_t^{N,i,i}\cdot \phi_t^{N,i,i}-\epsilon, &\phi_T^{N,i,i}=c, \\
      \Dot{\phi}_t^{N,i,i+1}&=\phi_t^{N,i,i}\cdot \phi_t^{N,i,i+1}+\phi_t^{N,i,i+1}\cdot \phi_t^{N,i+1,i+1}+\epsilon, &\phi_T^{N,i,i+1}=-c,\\
     &\vdots &\\
     \Dot{\phi}_t^{N,i,\ell}
     
     &=\phi_t^{N,i,i}\cdot \phi_t^{N,i,\ell}+\phi_t^{N,i,i+1}\cdot\phi_t^{N,i+1,\ell} \\
     & \qquad {} +\cdots+\phi_t^{N,i,\ell-1}\cdot\phi_t^{N,\ell-1,\ell}+\phi_t^{N,i,\ell}\cdot\phi_t^{N,\ell,\ell}, & \phi_T^{N,i,\ell}=0,\\
     &\vdots&\\
      \Dot{\phi}_t^{N,i,N-1}
     &=\phi_t^{N,i,i}\cdot \phi_t^{N,i,N-1}+\cdots+\phi_t^{N,i,N-1}\cdot\phi_t^{N,N-1,N-1},& \phi_T^{N,i,N-1}=0,\\
     \Dot{\phi}_t^{N,i,N}
     &=\phi_t^{N,i,i}\phi_t^{N,i,N}+\cdots+\phi_t^{N,i,N-1}\phi_t^{N,N-1,N}+\phi_t^{N,i,N}\phi_t^{N,N,N},& \phi_T^{N,i,N}=0;
\end{array}
\end{equation}
for $i=N$,
\[
\begin{array}{rll}
\Dot{\phi}_t^{N,N,N}&=\phi_t^{N,N,N}\cdot \phi_t^{N,N,N}-a_1, & \phi_T^{N,N,N}=c_1; 
\end{array}
\]
and $\psi_{\cdot}^{N,j}$, $j \le N$ are determined by 
\begin{equation} \label{finite-chain-riccati2}
\left\{
\begin{array}{lll}
\dot{\psi}_t^{N,i} &=\sum\limits_{j=i}^N \psi_t^{N,j}\phi_t^{N,i,j}, & \psi_T^{N,i}=0,\\
&\vdots\\
\dot{\psi}_t^{N,N-1} &= \psi_t^{N,N-1}\phi_t^{N,N-1,N-1}+\psi_t^{N,N}\phi_t^{N,N-1,N}, & \psi_T^{N,N-1}=0,\\
\dot{\psi}_t^{N,N} &= \psi_t^{N,N}\phi_t^{N,N,N}+a_1 m, & \psi_T^{N,N}=-c_1m.
\end{array}
\right.
\end{equation}
From the equations above, 
the functions $\phi_t^{N,i,i}$ for all $i=1,\cdots,N-1$ are identical; the functions $\phi_t^{N,i,i+1}$ for all $i=1,\cdots,N-2$ are identical ;$\cdots$; and the functions $\phi_t^{N,i,N-2}=\phi_t^{N,i+1,N-1}$. The functions $\phi_t^{N,i,N}$ for all $i$ depend on $\phi_t^{N,N,N}$ of the last player which is determined by the boundary condition. However,
the functions $\phi_t^{N,i,i},\cdots,\phi_t^{N,i,N-1}$ are independent of $\phi_t^{N,i,N}$ and the boundary condition. The functions $\psi^{N,\cdot}$ depend on the $\phi$ functions and have no effect on $\phi^{N,i,j}$ ($j<N$) as well. 

In conclusion, these $\phi^{N,i,j}\,(j<N)$ functions are solvable, identical and independent of the boundary condition as long as the boundary condition defines the last player as a self-controlled problem. The preceding argument is summarized as the following proposition. 

\begin{prop} An open-loop Nash equilibrium for the linear quadratic stochastic game with cost functionals (\ref{eq: objfunc1})-(\ref{eq: fg1}) for the first $N-1$ players and (\ref{eq: JNalphaN})-(\ref{eq: JNalphaN2}) for the $N$th player is given by (\ref{eq: OL-Nash1}), where $\phi^{N,i,j}_{\cdot}$ and $\psi^{N,j}$ are uniquely determined by the system  (\ref{finite-chain-riccati})-(\ref{finite-chain-riccati2}) of Riccati equations.  
\end{prop}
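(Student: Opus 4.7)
The plan is to verify, via the Pontryagin stochastic maximum principle, that the candidate profile $(\hat\alpha^1,\ldots,\hat\alpha^N)$ defined in (\ref{eq: OL-Nash1}) is indeed an open-loop Nash equilibrium. Because each player's running and terminal costs are convex in $(x,\alpha)$ (strictly convex in $\alpha^i$ and convex quadratic in the state), Pontryagin's principle gives a sufficient condition: it suffices to exhibit adapted processes $(X_t, Y_t^i, Z_t^i, \hat\alpha_t)$ that jointly satisfy the forward SDE, the adjoint BSDE system, and the pointwise Hamiltonian minimization in each coordinate $\alpha^i$. The first-order condition $\partial_{\alpha^i}H^i = \alpha^i + Y^{i,i} = 0$ gives $\hat\alpha^i = -Y^{i,i}$, so the construction reduces to solving the coupled forward-backward system.

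Next, I would reduce the BSDE system. Inspecting the right-hand sides of (\ref{BSDE-general}), one observes that for $j\notin\{i,i+1\}$ (and $j\neq N$ for $i=N$) the BSDE for $Y_{t}^{i,j}$ has zero driver and zero terminal condition, so by uniqueness $Y_t^{i,j}\equiv 0$ and $Z_t^{i,j,k}\equiv 0$, justifying (\ref{eq: YijZijk = 0}). Moreover, from the terminal conditions $Y^{i,i}_T = -Y^{i,i+1}_T$ and the opposite signs of their drivers, another uniqueness argument yields $Y_t^{i,i}=-Y_t^{i,i+1}$ throughout $[0,T]$. This collapses the vector BSDE down to a scalar BSDE in $Y^{i,i}$ for each $i$, plus the self-contained BSDE for $Y^{N,N}$.

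I then confirm the affine ansatz (\ref{ansatz-general}) by substitution. Differentiating the ansatz, plugging in the forward dynamics (\ref{eq: OL-Nash1}) and matching drift and diffusion terms against the BSDE (\ref{BSDE-general}) gives exactly the Riccati ODE system (\ref{finite-chain-riccati})-(\ref{finite-chain-riccati2}) together with the identification (\ref{martingale-general}) of the $Z$-components. Conversely, any solution of these ODEs furnishes deterministic coefficients $(\phi^{N,i,j},\psi^{N,i})$ from which the ansatz defines adapted $Y_t^{i,i}$, and Itô's formula reverses the computation to show that $(Y,Z)$ solves the BSDE. So the existence of a Nash equilibrium of the stated form is equivalent to the solvability of the ODE system.

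The main technical point, and the step I expect to be the chief obstacle, is establishing global existence on $[0,T]$ and uniqueness for the Riccati system. The system is upper-triangular: the diagonal equation for $\phi^{N,i,i}$ is a scalar Riccati equation $\dot\phi = \phi^2 - \epsilon$ with terminal value $c\ge 0$, which is explicitly solvable in closed form in terms of hyperbolic functions and stays bounded on $[0,T]$; the same holds for $\phi^{N,N,N}$. Given these, each off-diagonal equation for $\phi^{N,i,\ell}$ with $\ell>i$ is \emph{linear} in $\phi^{N,i,\ell}$ with coefficients that are already known bounded functions of $t$, so it admits a unique global solution by the variation-of-constants formula; the same observation applies to the linear backward system (\ref{finite-chain-riccati2}) for $\psi^{N,\cdot}$. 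Having established global solvability, I conclude by invoking the sufficient Pontryagin principle: since $H^i$ is strictly convex in $\alpha^i$ and convex in $(x,\alpha^i)$ jointly for each fixed opponent profile, the stationary point $\hat\alpha^i=-Y^{i,i}$ is the unique minimizer of $J^i$ given $\hat\alpha^{-i}$, and hence $(\hat\alpha^1,\ldots,\hat\alpha^N)$ is an open-loop Nash equilibrium. Uniqueness of the equilibrium of this affine form then follows from the uniqueness of the Riccati solution.
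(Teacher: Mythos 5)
Your proposal is correct and follows essentially the same route as the paper: Pontryagin's sufficient condition via convexity, reduction of the adjoint BSDEs to the scalar equations for $Y^{i,i}$, the affine ansatz, and matching of drift and martingale terms to obtain the Riccati system. The one point you develop further than the paper is the global solvability of the Riccati system — the paper merely asserts that the $\phi^{N,i,j}$ are ``solvable,'' whereas your observation that the system is triangular (explicit scalar Riccati equations on the diagonal, then linear ODEs with bounded coefficients for each successive off-diagonal band and for the $\psi^{N,j}$) supplies the existence and uniqueness argument the statement implicitly relies on.
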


As the number of players goes to infinity, we can get rid of the boundary condition and get a sequence of functions $\{\phi_t^j,j=1,2,\cdots\}$, defined by $
\phi_t^0=\phi_t^{N,i,i}
$, 
$\phi_t^1=\phi_t^{N,i,i+1}$, $\cdots$, 
$
\phi_t^{j}=\phi_t^{N,i,i+j}
$ for large $N$ and so on. 
It indicates that the Nash equilibrium converges to a limit independent of the boundary condition. Therefore, it is natural to study a similar game with infinite players and we conjecture that the limit of the Nash equilibrium of the finite-player game gives us the Nash equilibrium of the infinite-player game. And the sequence of functions $\{\phi_t^j,i\in \mathbb{N}\}$ is the solution to the Riccati equation system of the infinite-player game. This will be discussed in Section \ref{section 3}. Next, two particular examples are discussed to better illustrate the effect of the special boundary condition.

\subsubsection{Boundary Condition 1: $X^N $ is attracted to 0}\label{BC-attractedto0}
Here, we  discuss the case when $X^N$ is attracted to $0$ which is also the common mean $\mathbbm E [ \xi_{i}] \, =\,  0 $ of the initial condition. It is equivalent to the general boundary condition (\ref{eq: JNalphaN})-(\ref{eq: JNalphaN2}) with $m=0$. Without loss of generality, we  can take constants: $a_1=\epsilon$, $c_1=c$ and $a_2=c_2=0$. Then the cost functional for player $N$ is given by: 
\[
J^N(\alpha^N)=\mathbbm{E} \left\{\int_0^T \left(\frac{1}{2}(\alpha^N_t)^2+\frac{\epsilon}{2}(X_t^N)^2\right) {\mathrm d}t+\frac{c}{2}(X_T^N)^2\right\}.
\]
The running cost function is defined by $f^N(x, \alpha^N)=\frac{1}{2}(\alpha^N)^2+\frac{\epsilon}{2}x^2$ and the terminal cost function is defined by $g^N(x)=\frac{c}{2}c^2$. Then, $X^N$ is independent of the other players and is the solution of a self-controlled problem. 
We then make the same ansatz as (\ref{ansatz-general}) with $\psi_t^{N,i}=0$ for all $i$, $0 \le t \le T$. As a result, the martingale terms give the same processes 
$Z_t^{i,i,k}$ as (\ref{martingale-general}).
And from the drift terms, we obtain the system of Riccati equations:\\
for $i\leq N-1$, $0 \le t \le T$,
\[
\begin{array}{rll}
     \Dot{\phi}_t^{N,i,i}&=\phi_t^{N,i,i}\cdot \phi_t^{N,i,i}-\epsilon, &\phi_T^{N,i,i}=c, \\
     \Dot{\phi}_t^{N,i,i+1}&=\phi_t^{N,i,i}\cdot \phi_t^{N,i,i+1}+\phi_t^{N,i,i+1}\cdot \phi_t^{N,i+1,i+1}+\epsilon, &\phi_T^{N,i,i+1}=-c,\\
     &\vdots &\\
     \Dot{\phi}_t^{N,i,l}
     &=\phi_t^{N,i,i}\cdot \phi_t^{N,i,l}+\phi_t^{N,i,i+1}\cdot\phi_t^{N,i+1,l}+\cdots+\phi_t^{N,i,l-1}\cdot\phi_t^{N,l-1,l}+\phi_t^{N,i,l}\cdot\phi_t^{N,l,l}, & \phi_T^{N,i,l}=0,\\
    & \vdots&\\
     \Dot{\phi}_t^{N,i,N}
     &=\phi_t^{N,i,i}\phi_t^{N,i,N}+\phi_t^{N,i,i+1}\phi_t^{N,i+1,N}+\cdots+\phi_t^{N,i,N-1}\phi_t^{N,N-1,N}+\phi_t^{N,i,N}\phi_t^{N,N,N},& \phi_T^{N,i,N}=0;
\end{array}
\]
for $i=N$, $0 \le t \le T$,
\[
\Dot{\phi}_t^{N,N,N}=\phi_t^{N,N,N}\cdot \phi_t^{N,N,N}-\epsilon, \quad \phi_T^{N,N,N}=c, 
\]
From above, we have the same conclusion: the functions $\phi_t^{N,i,i+k}=\phi_t^{N,j,j+k} $ for all $i,j\geq 1,k\geq 1$ and $i+k<N,j+k<N$; and functions $\phi_t^{N,i,j} $($j<N$) are independent of the boundary condition. 

\begin{remark}\label{rem: dependence j-i}
Notice that in this case $\phi_t^{N,N,N}$ has the same solution as $\phi_t^{N,i,i}$ ($i<N$). Thus, in the ansatz (\ref{ansatz-general}), we can actually assume the solution 
$\phi^{N,i,j}_{\cdot}$ depends only on the difference $j-i$ for $j \ge i$. 
\end{remark} 


\subsubsection{Boundary Condition 2: $\alpha^N=0$} \label{BC-notattractedto0}
We  study the case when there is no control for the last player $X^N$, i.e. the dynamics of the state is given by: 
\[
    {\mathrm d}X_t^N=\sigma {\mathrm d}W_t^N, \quad 0 \le t \le T \,;\quad\quad X_0^N=\xi_N,\quad \mathbbm E(\xi_N)=0.
\]
Player $i$ chooses the strategy $\alpha_t^i$ ($i<N$) to minimize $J^i$ given in (\ref{eq: objfunc1}) and the last player does not control, i.e., $\alpha_\cdot^N \equiv 0$.
We make the same ansatz as in (\ref{ansatz-general}) with $\psi_t^{N,i}=0$ for all $i$. Then the  martingale terms give the same processes 
$Z_t^{i,i,k}$ as in (\ref{martingale-general}) for $i \le N$, $k \le N$, $0 \le t \le T$.\\
From the drift terms, we get the system of Riccati equations :\\
for $i\leq N-1$,
\[
\begin{array}{rll}
     \Dot{\phi}_t^{N,i,i}&=\phi_t^{N,i,i}\cdot \phi_t^{N,i,i}-\epsilon, &\phi_T^{N,i,i}=c, \\
     \Dot{\phi}_t^{N,i,i+1}&=\phi_t^{N,i,i}\cdot \phi_t^{N,i,i+1}+\phi_t^{N,i,i+1}\cdot \phi_t^{N,i+1,i+1}+\epsilon, &\phi_T^{N,i,i+1}=-c,\\
     &\vdots &\\
     \Dot{\phi}_t^{N,i,l}
     &=\phi_t^{N,i,i}\cdot \phi_t^{N,i,l}+\phi_t^{N,i,i+1}\cdot\phi_t^{N,i+1,l}+\cdots+\phi_t^{N,i,l-1}\cdot\phi_t^{N,l-1,l}+\phi_t^{N,i,l}\cdot\phi_t^{N,l,l}, & \phi_T^{N,i,l}=0,\\
     &\vdots&\\
     \Dot{\phi}_t^{N,i,N-1}
     &=\phi_t^{N,i,i}\phi_t^{N,i,N-1}+\phi_t^{N,i,i+1}\phi_t^{N,i+1,N-1}+\cdots+\phi_t^{N,i,N-1}\phi_t^{N,N-1,N-1}, & \phi_T^{N,i,N-1}=0,\\
     \Dot{\phi}_t^{N,i,N}
     &=\sum\limits_{j=i}^{N-1} \phi_t^{N,i,j}\phi_t^{N,j,N}\\
     &=\phi_t^{N,i,i}\phi_t^{N,i,N}+\phi_t^{N,i,i+1}\phi_t^{N,i+1,N}+\cdots+\phi_t^{N,i,N-1}\phi_t^{N,N-1,N}, & \phi_T^{N,i,N}=0;
\end{array}
\]
for $i=N$,
\[
\Dot{\phi}_t^{N,N,N}=-\epsilon, \quad \phi_T^{N,N,N}=c, 
\]
From above, it is demonstrated again that the boundary condition does not affect the solutions $\phi_\cdot^{N,i,j}$ ($j<N$), however, the functions $\phi_\cdot^{N,i,N}$ for all $i$ are different from those in Section \ref{BC-attractedto0}, which are dependent on the boundary condition.

\subsection{Closed-loop Nash Equilibrium}
In search for closed-loop Nash equilibria, the controls are of the form $\alpha^k(t,x)$. When computing $\partial_{x^j}H^i$ in the derivation of the BSDE for $Y^{i,j}$, one needs to pay attention in taking derivatives with respect to $x^j$ in  $\hat\alpha^k$ for $k\neq i$, using  $\hat\alpha^k=-y^{k,k}$ and the ansatz \eqref{eq15}. This is a tedious but straightforward computation which leads to the fact that the obtained closed-loop equilibrium  coincides with the open-loop equilibrium identified  before. We omit the details here as well as repeating this remark in the following sections. The only place where closed-loop and open-loop equilibria will be different is in Section \ref{section 5} when we will look at a mixture of directed chain and mean field interactions for  finite player games, as it is already the case for pure mean field interaction studied in \cite{CarmonaFouqueSunSystemicRisk}. However, they will coincide again for the infinite-player games in Section \ref{section 6}.

\section{Infinite-Player Game Model} \label{section 3}
Motivated by the limit of the finite-player game discussed in Section \ref{section 2}, we define the game with infinite players on a directed chain structure as shown in Figure \ref{fig:chain}. In Remark \ref{finiteYs} in Section \ref{infinite-game-nash}, we will see that the Hamiltonian only depends on finite players, which will make it well-defined.
We assume that the state dynamics of all players are given by the stochastic differential equations of the form: for $i \ge 1$, 
\begin{equation} \label{eq: inf player game states}
{\mathrm d}X_t^i=\alpha^i_t {\mathrm d}t+\sigma {\mathrm d}W_t^i, \quad 0\leq t\leq T , 
\end{equation}
where  $(W_t^i)_{0\leq t\leq T}$, $i\geq 1$ are one-dimensional, independent Brownian motions. Similar to the setup for the finite-player games in Section \ref{section 2}, we assume that the drift coefficients $\alpha^i$ are adapted to the filtration of the Brownian motions and satisfy $\mathbbm{E}[\int_0^T |\alpha_t^i|^2 {\mathrm d}t]<\infty$. We also assume that the diffusion coefficients are constant and identically denoted by $\s>0$. The system starts at time $t = 0$ from $i.i.d.$ square-integrable random variables $X_0^i = \xi_i$ with $ {\mathbbm E}(\xi_i) = 0$, independent of the Brownian motions. 
In this model, player $i$ chooses its own strategy $\alpha^i$ in order to minimize its expected cost function of the form: 
\begin{equation} \label{eq: costInfPlayGame}
J^i(\boldsymbol{\alpha})=\mathbbm E\Big[ \int_0^T f^i(X_s,\alpha_s^{i}) {\mathrm d}s+g^i(X_T) \Big],
\end{equation}
where the running and terminal cost functions $f^{i}(x, \alpha^{i}) $, $g^{i}(x)$ are the same as in  (\ref{eq: fg1}).  

\subsection{Open-Loop Nash Equilibrium}\label{infinite-game-nash}

We search for an open-loop Nash equilibrium of the infinite system (\ref{eq: inf player game states}) among admissible strategies $\{\alpha_t^i, i=1,2,\cdots, 0 \le t \le T\}$.
First, we define the Hamiltonian $H^{i}$ of the form:
\begin{equation} \label{eq: Hamiltonian inf game}
    H^i(x^1,x^2,\cdots,y^{i,1},\cdots, y^{i,n_i},\a^1,\a^2,\cdots)=\sum\limits_ {k=1}^{n_i} \a^k y^{i,k}+\frac{1}{2}(\a^i)^2+\frac{\e}{2}(x^{i+1}-x^i)^2,
\end{equation}
assuming it is defined on real numbers $x^{i}$, $y^{i,k}$, $\alpha^{i}$, $i \ge 1$, $k \ge 1$, where only finitely many $y^{i,k}$ are non-zero for every given $i$. 
Here, $n_i$ is a finite number depending on $i$ with $n_i>i$. This assumption is checked in Remark \ref{finiteYs} below. Thus, the Hamiltonian $H^i$ is well defined for $i \ge 1$.

The adjoint processes $Y_t^i=(Y_t^{i,j};j=1,\cdots,n_i)$ and $Z_t^i=(Z_t^{i,j,k};1\leq j\leq n_i,k\geq 1)$ for $i\ge 1$ are the solutions of the system of backward stochastic differential equations (BSDEs): for $0 \le t \le T$, $i \ge 1$, $1 \le j \le n_{i}$, 
\begin{equation}\label{eq13}
    \left\{
  \begin{array}{ll}
    dY_t^{i,j}&= -\partial_{x^j}H^i(X_t,Y_t^i,\a_t){\mathrm d}t+ \displaystyle \sum_ {k=1}^\infty Z_t^{i,j,k}{\mathrm d}W_t^k\\
    &=-\e (X_t^{i+1}-X_t^i)(\d_{i+1,j}-\d_{i,j}){\mathrm d}t+ \displaystyle \sum_ {k=1}^\infty Z_t^{i,j,k}{\mathrm d}W_t^k,\\
    Y_T^{i,j}&=\partial_{x^j}g_i(X_T)=c(X_T^{i+1}-X_T^i)(\d_{i+1,j}-\d_{i,j}).
  \end{array}
\right.
\end{equation}

\begin{remark}\label{finiteYs}
For every $j\neq i$ or $i+1$, $dY_t^{i,j}=\sum_ {k=1}^\infty Z_t^{i,j,k}dW_t^k$ and $Y_T^{i,j}=0$ implies $Z_t^{i,j,k}=0$ for all $k$. This observation is consistent with \eqref{eq: YijZijk = 0} in the finite player game case. Note also that $Y^{i,i+1}=Y^{i,i}$. There must be finitely many non-zero $Y^{i,j}$'s for every $i$. Hence, the Hamiltonian $H^{i}$ in \eqref{eq: Hamiltonian inf game} can be rewritten as 
\begin{equation*}
    H^i(x^1,x^2,\cdots,y^{i,i},y^{i,i+1},\a^1,\a^2,\cdots)= \a^i y^{i,i}+\a^{i+1} y^{i,i+1}+\frac{1}{2}(\a^i)^2+\frac{\e}{2}(x^{i+1}-x^i)^2.
\end{equation*} 
\end{remark}

\bigskip
By minimizing the Hamiltonian with respect to $\a^i$, we may obtain  
$\Hat{\a}^i=-y^{i,i}$ for all $i$. 
Inspired by the conclusion from the finite-player game (see also Remark \ref{rem: dependence j-i}), we then make the ansatz of the form:
\begin{equation}\label{eq15}
    Y_t^{i,i}=\slimj \phi_t^{j-i}X_t^j, \quad 0 \le t \le T 
\end{equation}
for some deterministic scalar functions $\phi_t^{i}$ satisfying the terminal conditions: $\phi_T^0=c,\phi_T^1=-c,\phi_T^i=0$ for $i\geq 2$.
Substituting the ansatz (\ref{eq15}), the optimal strategy $\hat{\alpha}^{i}$  and the forward equation for $X_{\cdot}^{i}$ in (\ref{eq: inf player game states}) are 
\begin{equation}\label{eq16}
    \Hat{\a}_t^i=-Y_t^{i,i}=-\slimj \phi_t^{j-i}X_t^j, 
    \quad {\mathrm d} X_t^i=-\slimj \phi_t^{j-i}X_t^j {\mathrm d}t+\s {\mathrm d}W_t^i 
\end{equation}
for $i \ge 1$, $0 \le t \le T $. Differentiating the ansatz (\ref{eq15}), we obtain 
\begin{equation}\label{eq17}
  \begin{array}{ll}
    {\mathrm d}Y_t^{i,i}&=\slimj [X_t^j\Dot{\phi}_t^{j-i} {\mathrm d}t+\phi_t^{j-i}{\mathrm d}X_t^j]\\
    &= \sum\limits_{\ell=0}^{\infty} \Dot{\phi}_t^{\ell}X_t^{i+\ell}{\mathrm d}t-\sum\limits_{\ell=0}^{\infty} \Big(\sum\limits_{j=0}^\ell \phi_t^j\phi_t^{\ell-j}\Big) X_t^{i+\ell}{\mathrm d}t+\s \sum\limits_{\ell=i}^\infty \phi_t^{\ell-i}{\mathrm d}W_t^\ell.
  \end{array}
\end{equation}
\\
Now we compare the two It\^o's decompositions (\ref{eq17}) and (\ref{eq13}) of $Y_t^{i,i}$. The martingale terms give the processes 
$Z_t^{i,j,k}$:
\begin{equation*}
    Z_t^{i,i,k}=0 \quad \text{ for } \quad k<i \quad \text{ and } \quad \ Z_t^{i,i,k}=\s\phi_t^{k-i} \quad \text{ for } \quad k\geq i.
\end{equation*}

And from the drift terms, we get the system of Riccati equations: for $0 \le t \le T$
\begin{equation} \label{eq18}
\begin{array}{rll}
     \text{for}\ j=0:&\Dot{\phi}_t^0=\phi_t^0\cdot \phi_t^0-\e , &\phi_T^0=c, \\
     \text{for}\ j=1:& \Dot{\phi}_t^1=2\phi_t^0\cdot \phi_t^1+\e , &\phi_T^1=-c,\\
     \text{for}\ j\geq 2:&\Dot{\phi}_t^j=\phi_t^0\cdot \phi_t^j+\phi_t^1\cdot\phi_t^{j-1}+\cdots+\phi_t^{j-1}\cdot\phi_t^1+\phi_t^j \cdot\phi_t^0 , & \phi_T^j=0.
\end{array}
\end{equation}

The solutions to this Riccati system coincide with the limit of the solutions to the ODE system (\ref{finite-chain-riccati}) of the N-player directed chain game in Section \ref{section 2}, i.e., $\phi^i_{\cdot}=\lim\limits_{N\to\infty} \phi^{N,i,i+j}_{\cdot}$ in the supremum norm. The Riccati system (\ref{eq18}) is solvable.  

\begin{Lemma}\label{inf_sumo}
With the positive constants $c > 0 $, $\varepsilon > 0 $, the solution to the Riccati system (\ref{eq18})  satisfies 
\begin{equation}
\sum\limits_{j=0}^{\infty} \phi_t^{j}=0,\quad\quad
\phi_{t}^{0}=\dfrac{(-\e-c\sqrt{\e})e^{2\sqrt{\e}(T-t)}+\e-c\sqrt{\e}}{(-\sqrt{\e}-c)e^{2\sqrt{\e}(T-t)}-\sqrt{\e}+c} > 0 ,
\end{equation}
for $0\le t \le T$. Moreover, the functions $\phi^k$'s are obtained by a series expansion  of the generating function $S_t(z) = \sum_{k=0}^{\infty} z^{k} \phi^{k}$, $ z \le 1$ of the sequence $\{\phi^{\ell}\}$ given by $S_{t} (1) \equiv 0 $, and if $z < 1 $, 
\begin{equation}
S_{t}(z) = \dfrac{\big(-\e(1-z)-c\sqrt{\e(1-z)}(1-z)\big)\, e^{2\sqrt{\e(1-z)}(T-t)}+\e(1-z)-c\sqrt{\e(1-z)}(1-z)}{\big(-\sqrt{\e(1-z)}-c(1-z)\big)\, e^{2\sqrt{\e(1-z)}(T-t)}-\sqrt{\e(1-z)}+c(1-z)}
\end{equation}
for every $0 \le t \le T$. 
\end{Lemma}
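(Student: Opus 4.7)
The plan is to collapse the infinite Riccati system (\ref{eq18}) into a single scalar Riccati equation for the generating function $S_t(z) := \sum_{k=0}^{\infty} z^k \phi_t^k$, solve that ODE in closed form by the standard linearization trick, and then read off all three claims by specializing to $z=0$ and $z=1$.

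Step one: multiply the $j$-th equation of (\ref{eq18}) by $z^j$ and sum over $j \ge 0$. The convolution $\sum_{k=0}^{j} \phi_t^k \phi_t^{j-k}$ collapses to $S_t(z)^2$, the two inhomogeneous terms combine into $-\epsilon + \epsilon z = -\epsilon(1-z)$, and the terminal data telescopes to $S_T(z) = c(1-z)$. Formally,
\begin{equation*}
\dot S_t(z) \,=\, S_t(z)^2 - \epsilon(1-z), \qquad S_T(z) \,=\, c(1-z) .
\end{equation*}
For each fixed $z \le 1$ this is a scalar Riccati equation with constant coefficients. Setting $\mu := \sqrt{\epsilon(1-z)}$ and writing $S = -\dot u/u$ reduces it to $\ddot u = \mu^2 u$, whose solutions are combinations of $e^{\pm \mu t}$. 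Imposing the terminal condition and simplifying gives the closed form stated in the lemma. Specializing $z = 0$ recovers the explicit formula for $\phi_t^0$; specializing $z = 1$ (so $\mu = 0$ and the terminal value vanishes) produces the unique solution $S_t(1) \equiv 0$, which translates to $\sum_{j \ge 0} \phi_t^j = 0$. Positivity of $\phi_t^0$ is then a direct sign check: writing $E := e^{2\sqrt{\epsilon}(T-t)} \ge 1$, the numerator equals $-\epsilon(E-1) - c\sqrt{\epsilon}(E+1) < 0$ and the denominator equals $-\sqrt{\epsilon}(E+1) - c(E-1) < 0$, so the ratio is strictly positive.

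Step two is to make the formal generating-function calculation rigorous. Here I would exploit the triangular structure of (\ref{eq18}): once $\phi_t^0$ has been solved in closed form (and is bounded on $[0,T]$), each subsequent equation is a linear ODE in $\phi^j$ with coefficient $2\phi_t^0$ and a forcing term built from $\phi^1, \dots, \phi^{j-1}$. A Gronwall-type induction on $j$ then delivers a geometric majorant $|\phi_t^j| \le C K^j$ uniformly in $t \in [0,T]$, which legitimizes convergence of $S_t(z)$ for $|z| < 1/K$, termwise time differentiation, and the Cauchy product identity $\sum_j z^j \sum_{k=0}^{j} \phi^k \phi^{j-k} = S(z)^2$. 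Uniqueness for the scalar Riccati Cauchy problem then identifies the resulting power series with the explicit expression, and the boundary value $S_t(1) = 0$ is recovered by taking $z \uparrow 1$ in the explicit formula (the leading $\sqrt{1-z}$ factors in numerator and denominator cancel to give $c(1-z) \to 0$) together with Abel's theorem.

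The main obstacle I expect is precisely this last justification: extending the domain of convergence of the power series far enough to cover $z = 1$ as a boundary point and ensuring the $\phi^j$ decay fast enough for the series identities to hold on all of $[0,T]$. Once this technical step is secured, everything else reduces to solving a standard constant-coefficient scalar Riccati equation and to elementary sign analysis.
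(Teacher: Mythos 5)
Your proposal follows essentially the same route as the paper's proof in Appendix A.1: collapse the infinite system into the scalar Riccati equation $\dot S_t(z)=S_t(z)^2-\epsilon(1-z)$, $S_T(z)=c(1-z)$ via the generating function, solve it in closed form, and read off $\phi^0_t$ at $z=0$ and $\sum_j\phi^j_t=0$ at $z=1$ (the paper likewise handles $z=1$ by a limiting sequence $z_n\uparrow 1$, which you flesh out with a geometric majorant and Abel's theorem). The argument is correct, and your added sign check for $\phi^0_t>0$ and the convergence justification are sound refinements of what the paper leaves implicit.
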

\begin{proof} Given in Appendix \ref{appendix1}. \end{proof}
\begin{remark}
It follows from Lemma \ref{inf_sumo} that the forward dynamics (\ref{eq16}) can be formally written as:
\begin{equation} \label{eq: OLNashInf1}
\begin{split}
      {\mathrm d}X_t^i&=- \sum_{j=0}^\infty\phi_t^{j}X_t^{i+j} {\mathrm d}t+\s {\mathrm d}W_t^i 
     =\phi_t^{0}\cdot \Big( \sum_{j=1}^\infty \frac{-\phi_t^{j}}{\phi_t^{0}}X_t^{i+j} -X_t^{i}\Big) {\mathrm d}t +\s {\mathrm d}W_t^i 
 \end{split} 
\end{equation}
for $i \ge 1$, $0 \le t \le T$. This is a mean-reverting type process, since $\phi_t^0>0$.
We also see that this system is invariant under the shift of indices of individuals. In particular, the law of $X^i$ is the same as the law of $X^1$ for every $i$ and also $X^i$ is independent of $(W^1,\cdots,W^{i-1})$.
\end{remark}

We end with a summary of this section on the infinite player game.  

\begin{prop} An open-loop Nash equilibrium for the infinite-player stochastic game with cost functionals (\ref{eq: costInfPlayGame}) with  (\ref{eq: fg1}) is determined by (\ref{eq: OLNashInf1}), where $\{ \phi^{j}, j \ge 0 \} $ are the unique solution to the infinite system  (\ref{eq18}) of Riccati equations.  
\end{prop}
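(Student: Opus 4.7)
The plan is to execute the sufficient Pontryagin principle for the candidate profile $\hat\alpha^i_t = -\sum_{j\geq i}\phi^{j-i}_t X_t^j$, taking the Riccati coefficients $\{\phi^j\}_{j\geq 0}$ as provided by Lemma \ref{inf_sumo}. The derivation preceding the statement (ansatz, It\^o expansion, identification of drift and martingale parts) was purely formal; the actual content of the proposition is that the resulting candidate is admissible, that the process $Y^{i,i}_t=\sum_{j\geq i}\phi^{j-i}_t X_t^j$ really solves the BSDE (\ref{eq13}), and that no player can strictly improve via a unilateral deviation.

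First I would establish well-posedness of the infinite forward system (\ref{eq: OLNashInf1}). From the explicit form of the generating function $S_t(z)$ in Lemma \ref{inf_sumo}, $z\mapsto S_t(z)$ is analytic on a disk of radius strictly larger than $1$ (uniformly in $t\in[0,T]$), yielding a geometric decay estimate of the form $|\phi^j_t|\leq C\rho^j$ for some $\rho\in(0,1)$. Because the coefficients $\phi^{j-i}_t$ are shift-invariant under $i\mapsto i+1$, I would construct the strong solution of the infinite system by Picard iteration on a weighted $\ell^2$ sequence space tailored to this decay, or equivalently by solving the finite chain of length $N$, showing tightness, and identifying the limit; shift-invariance forces $X^i\stackrel{d}{=}X^1$ and in particular $\sup_{i,t\le T}\E[(X_t^i)^2]<\infty$. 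Admissibility $\E\int_0^T|\hat\alpha^i_t|^2\,{\mathrm d}t<\infty$ then follows by Cauchy--Schwarz together with the decay of $\phi^j$, and a direct differentiation of the ansatz against the forward dynamics shows that $Y^{i,i}$ with $Z^{i,i,k}=\sigma\phi^{k-i}_t\mathbbm{1}_{\{k\ge i\}}$ does solve (\ref{eq13}).

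With well-posedness in hand I would verify the Nash property. By Remark \ref{finiteYs}, only $Y^{i,i}$ and $Y^{i,i+1}=-Y^{i,i}$ enter $H^i$, so the Hamiltonian reduces to a finite-dimensional expression jointly convex in $(x^i,x^{i+1},\alpha^i)$; the terminal cost $g^i$ is also convex. For any admissible deviation $\beta^i$ of player $i$ with associated state $X^{\beta,i}$ (other players keeping $\hat\alpha^j$), the classical convexity calculation gives
\begin{equation*}
J^i(\beta^i,\hat\alpha^{-i})-J^i(\hat\alpha) \,\ge\, \E\int_0^T \partial_{\alpha^i}H^i(\hat X_t,Y^i_t,\hat\alpha_t)\,(\beta^i_t-\hat\alpha^i_t)\,{\mathrm d}t \,=\, 0,
\end{equation*}
the inequality by joint convexity of $H^i$ and $g^i$, the equality by the first-order condition $\hat\alpha^i_t=-Y^{i,i}_t$. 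Uniqueness of the solution $\{\phi^j\}$ of (\ref{eq18}) in the class of bounded continuous sequences of functions is obtained inductively on $j$: $\phi^0$ solves a scalar Riccati uniquely on $[0,T]$ (Lemma \ref{inf_sumo}); given $\phi^0,\ldots,\phi^{j-1}$, the equation for $\phi^j$ is linear in $\phi^j$ with coefficient $2\phi^0$ and an explicit source term, hence uniquely solvable.

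The genuinely delicate step in this program is the well-posedness and admissibility for the infinite forward SDE: the drift of $X^i$ is an infinite series involving all downstream players, and controlling this series requires the decay bound on $\phi^j$ extracted from the analyticity of $S_t(z)$ in Lemma \ref{inf_sumo}, together with the uniform $L^2$-bound obtained from shift-invariance. Once these ingredients are in place, the verification is standard.
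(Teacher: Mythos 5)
Your proposal follows the same route as the paper: Pontryagin's principle, the shift-invariant ansatz $Y^{i,i}_t=\sum_{j\ge i}\phi^{j-i}_tX^j_t$, identification of the Riccati system \eqref{eq18}, and Lemma \ref{inf_sumo} for its solution. The paper's ``proof'' of this proposition is exactly the formal derivation that precedes it (plus Lemma \ref{inf_sumo}); it does not separately verify admissibility, well-posedness of the infinite forward system, or the sufficiency (convexity) half of the maximum principle. What you add -- the convexity verification of the Nash property using Remark \ref{finiteYs} to reduce $H^i$ to a finite-dimensional convex function, and the inductive uniqueness argument for \eqref{eq18} (scalar Riccati for $\phi^0$, then linear first-order equations for $\phi^j$, $j\ge 1$) -- is genuine verification content that the paper leaves implicit, and both pieces are sound.

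The one step of your plan that needs an actual argument rather than an assertion is the geometric decay $|\phi^j_t|\le C\rho^j$ via analyticity of $S_t(z)$ on a disk of radius strictly larger than $1$. This is more delicate than it looks: $S_t(z)$ is built from $\sqrt{\epsilon(1-z)}$, so $z=1$ is a priori a branch point, and indeed in the stationary limit $S(z)=\sqrt{\epsilon(1-z)}$ the coefficients decay only polynomially, $\phi^j\sim -C\,j^{-5/2}$ by \eqref{eq: phi01j}, so no geometric bound can hold uniformly in $T$. For finite $T$ the claim can be rescued: writing $w=\sqrt{\epsilon(1-z)}$ one checks that $S_t$ is an \emph{even} function of $w$, hence genuinely analytic in $z$ near $z=1$, and the nearest singularities are the zeros of the denominator of $S_t$, which for $\tau=T-t\le T$ lie at real $z=1+y^2/\epsilon$ with $y$ bounded away from $0$ uniformly on $[0,T]$. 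You should either carry out this computation or weaken the requirement: summable (e.g.\ $O(j^{-3/2})$) decay of $\phi^j_t$, which is what Lemma \ref{inf_sumo} implicitly provides through $S_t(1)=0$, already suffices for the Picard construction on a weighted sequence space, for the $L^2$ admissibility of $\hat\alpha^i$, and for the convexity estimate. With that adjustment the verification goes through as you describe.
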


\section{Catalan Markov Chain}\label{section 4}

In order to simplify our analysis, we look at the stationary solution $\{ \phi^{j} , j \ge 0\}$ of the Riccati system (\ref{eq18}) in Section \ref{section 3}, as $T \to \infty$. 
Without loss of generality, we assume $\epsilon=1$. By taking $T\to \infty$,  we obtain the stationary long-time behavior satisfying $\Dot{\phi}_\cdot^{j}=0$ for all $j$.
Then,  (\ref{eq18}) gives the recurrence relation for the stationary solution $\{ \phi^{j}, j \ge 0 \} $: 
\begin{equation}
\phi^0=1 \quad \text{ and } \quad \sum\limits_{j=0}^n\phi^j\phi^{n-j}=0; \quad n \ge 0 . 
\end{equation} 
This is closely related to the recurrence relation of {\it Catalan} numbers. By using a moment generating function method as in Appendix  \ref{appendix1}, we  get the stationary solutions 
\begin{equation} \label{eq: phi01j}
 \phi^0=1, \quad \phi^1=-\frac{1}{2}, \quad  \phi^j=-\dfrac{(2j-3)!}{(j-2)!\, j!\, 2^{2j-2}} \, \quad \text{ for } \, \, j\geq 2. 
\end{equation}


We consider the continuous-time Markov chain $M(\cdot)$ with state space $\, \mathbb N^\star \,$ and generator matrix 
\begin{equation} \label{eq: Q:sec4}
\mathbf{Q} \, =\,  (q_{i,j}) \,=\left( \begin{array}{ccccc} 
-1 & p_1 &  p_2 & p_3 & \cdots \\
0 & -1 & p_1 &  p_2 & \ddots \\
0 &0 & -1 & p_1 &\ddots\\
& \ddots & \ddots &\ddots  & \ddots\\
\end{array} 
\right) , 
\end{equation} 
where $(i,j)$ element $q_{i,j}$ of $\mathbf Q$ is given by $q_{i,j} = p_{j-i} \cdot {\bf 1}_{\{j \ge i \}}$ with $p_{k} = - \phi^{k} $,  $k \ge 0$, $i, j \ge 1$. Note that the transition probabilities of the continuous-time Markov chain $M(\cdot)$, called a Catalan Markov chain, are  $p_{i,j}(t)=\, \mathbb P (M(t)=j|M(0)=i)=(e^{t\mathbf{Q}})_{i,j},\, i,j \ge 1, \, t\geq 0$. Then with replacement of $\phi^{j}_{t}$, $t\ge 0 $ by the stationary solution $\phi^{j}$ in (\ref{eq16}), the infinite particle system $ (X_{\cdot}^{i}$, $i \ge 1 )$ can be represented formally as a linear stochastic evolution equation:
\begin{equation}  \label{eq22}
{\mathrm d}\mathbf{X}_t=\mathbf{Q\,X}_t {\mathrm d}t+ {\mathrm d}\mathbf{W}_t; \quad t \ge 0 , 
\end{equation}
where $\mathbf{X_.}=(X_{.}^{i}, i \ge 1 )$ with $\mathbf{X}_0=\mathbf{x}_0$ and $\mathbf{W_.}=(W_{.}^{k} , k \ge 1 )$. Its solution is:
\begin{equation}
\mathbf{X}_t=e^{t\mathbf{Q}}\mathbf{x_0}+ \int_0^t e^{(t-s)\mathbf{Q}}{\mathrm d}\mathbf{W}_s ;\quad t\geq 0.
\end{equation}

Without loss of generality, let us assume $\mathbf{X}_0=\mathbf{0}$. Then, 
\begin{equation}
\begin{split} 
{X}_t^{i}&=\mathop{\mathlarger{\int^t_0}}\,  \sum_{j=i}^\infty p_{i,j}(t-s) {\mathrm d} W_{s}^{j} 
 =\mathop{\mathlarger{\int^t_0}}\, \sum_{j=i}^\infty\, \mathbb P (M(t-s)=j|M(0)=i){\mathrm d} W_{s}^{j}\\
 &=\mathbbm{E}^M \Big[ \mathop{\mathlarger{\int^t_0}}\,\sum_{j=i}^\infty\, \mathbf{1}_ {(M(t-s)=j)} {\mathrm d} W_{s}^{j} | M(0)=0 \Big];\quad t\geq 0,
\end{split} 
\end{equation}
where the expectation is taken with respect to the probability induced by the Catalan Markov chain $M(\cdot)$, independent of the Brownian motions $(W_{\cdot}^{j},\, j\in \mathbb{N}_0)$. 
This is a Feynman--Kac representation formula for the infinite particle system $\mathbf X_{\cdot}$ in (\ref{eq22}) associated with the continuous-time Markov chain $M(\cdot)$ with the generator $\mathbf{Q}$. Interestingly, we may compute quite explicitly the corresponding transition probability $(p_{i,j}(\cdot))$ 
for the generator $\mathbf Q$ in (\ref{eq: Q:sec4}). \\

\begin{prop}\label{sol_markovc}
The Gaussian process $\, X_{t}^{i} \,$, $\, i \ge 1 \,$, $\, t \ge 0 \,$ in \eqref{eq22}, corresponding to the Catalan Markov chain, is 
\begin{equation} \label{solution_mc}
\begin{split}
{X}_{t}^{i}  \, &=\, \sum\limits_{j=i}^{\infty}\int^{t}_{0}   (\exp ( Q (t-s)))_{i,j} {\mathrm d} {W}_{s}^{j} \, =\, \sum\limits_{j=i}^{\infty} \int^{t}_{0}  \frac{\,(t-s)^{2(j-i)}  \,}{\, (j-i)!\,} \cdot F^{(j-i)}(-(t-s)^{2})  {\mathrm d} W_{s}^{j}  \\
&\, =\,  \sum\limits_{j=i}^{\infty} \int^{t}_{0}  \frac{\,(t-s)^{2(j-i)}  \,}{\, (j-i)!\,} \cdot \rho_{j-i}(- (t-s)^{2}) \, e^{-(t-s)} \cdot  {\mathrm d} W_{s}^{j},
\end{split}
\end{equation}
where $\, {W}_{\cdot}^{j}  \,$, $ j \in \mathbb N \,$ are independent standard Brownian motions and $\rho_{i}(\cdot)$ is defined by  
\begin{equation} \label{eq: rhokxProp1}
\rho_{i}(x) =\frac{1}{2^i} \sum\limits_{j=i}^{2i-1}\, \frac{(i-1)!}{(2j-2i)!!(2i-j-1)!} \cdot (-x)^{\,-\frac{j}{2}}, 
\end{equation} 
for $i\geq 1$, and $\rho_{0}(x) \, =\,  1$ for $x \le 0 $.  
\end{prop}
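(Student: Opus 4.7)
The plan has three stages. First, \eqref{eq22} is a constant-coefficient linear SDE with $\mathbf{X}_0=\mathbf{0}$, so the variation-of-constants formula gives
$$ \mathbf{X}_t\,=\,\int_0^t e^{(t-s)\mathbf{Q}}\, d\mathbf{W}_s, \qquad X_t^i\,=\,\sum_{j\ge i}\int_0^t (e^{(t-s)\mathbf{Q}})_{i,j}\, dW_s^j , $$
which is the first equality in \eqref{solution_mc}. Upper triangularity of $\mathbf Q$ makes each entry of $\mathbf Q^{n}$ a finite sum, and the square integrability required for the stochastic integral will come out of the closed form derived below.

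Next, I would compute $(e^{t\mathbf Q})_{i,j}$ using generating functions. Because $\mathbf Q$ is upper triangular Toeplitz with $q_{i,j}=p_{j-i}$, every power $\mathbf Q^{n}$ is also upper triangular Toeplitz, and its diagonals have generating function $q(z)^{n}$, where $q(z):=\sum_{k\ge 0}p_{k}z^{k}$. The stationary Riccati recurrence of Section \ref{section 4} (with $\epsilon=1$) reads $\phi^{0}\phi^{0}=1$, $2\phi^{0}\phi^{1}=-1$, and $\sum_{k=0}^{n}\phi^{k}\phi^{n-k}=0$ for $n\ge 2$, which is exactly the convolution identity $\bigl(\sum_{k}\phi^{k}z^{k}\bigr)^{2}=1-z$. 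Taking the positive square root gives $\sum_{k}\phi^{k}z^{k}=\sqrt{1-z}$, and since $p_{k}=-\phi^{k}$ this yields $q(z)=-\sqrt{1-z}$. Hence $e^{t\mathbf Q}$ is Toeplitz with symbol $e^{tq(z)}=e^{-t\sqrt{1-z}}$, so
$$ (e^{t\mathbf Q})_{i,j}\,=\,[z^{j-i}]\,e^{-t\sqrt{1-z}} . $$

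I then rewrite this coefficient extraction as a derivative. Setting $F(x):=e^{-\sqrt{-x}}$ for $x\le 0$, the identity $-t\sqrt{1-z}=-\sqrt{-(-t^{2}+t^{2}z)}$ gives $e^{-t\sqrt{1-z}}=F(-t^{2}+t^{2}z)$. Taylor-expanding $F$ around $x_{0}=-t^{2}$ yields
$$ [z^{k}]\,e^{-t\sqrt{1-z}}\,=\,\frac{t^{2k}}{k!}\,F^{(k)}(-t^{2}) , $$
which, after replacing $t$ by $t-s$, is the second equality in \eqref{solution_mc}. For the third equality I would show by induction on $k$ that $F^{(k)}(x)=\rho_{k}(x)\,e^{-\sqrt{-x}}$, using the base identity $\frac{d}{dx}e^{-\sqrt{-x}}=\tfrac{1}{2}(-x)^{-1/2}e^{-\sqrt{-x}}$ to obtain the recursion $\rho_{k+1}=\rho_{k}'+\tfrac{1}{2}(-x)^{-1/2}\rho_{k}$ with $\rho_{0}\equiv 1$. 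Expanding $\rho_{k}(x)=\sum_{\ell}a_{k,\ell}(-x)^{-\ell/2}$ converts this into a two-term recurrence for the $a_{k,\ell}$, whose closed-form solution is \eqref{eq: rhokxProp1}.

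The most delicate step is this final combinatorial identification: producing the precise double factorials and the index range $k\le \ell\le 2k-1$. A possibly cleaner alternative is Faà di Bruno's formula applied to the composition $e^{-u}$ with $u=\sqrt{-x}$, since the resulting sum over integer compositions of $k$ naturally produces the double factorial weights and the correct range, turning \eqref{eq: rhokxProp1} into essentially a read-off. Everything preceding that step is general Toeplitz-operator calculus and analytic manipulation, and the convergence of the infinite sums in $j$ can be verified \emph{a posteriori} from the explicit formula, using $F^{(k)}(-t^{2})=O(t^{-2k+1}e^{-t})$ as $t\to\infty$.
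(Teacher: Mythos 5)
Your proposal is correct and follows essentially the same route as the paper's Appendix~\ref{CatalanMC-1}: your symbol identity $q(z)^2=1-z$ is exactly the paper's matrix identity $\mathbf{Q}^2=I-B$ read on the Toeplitz diagonals, your Taylor expansion of $F(-t^2+t^2z)$ around $-t^2$ is the paper's Jordan-block formula $F(J_\infty(-1)t^2)=\sum_k \frac{F^{(k)}(-t^2)}{k!}(t^2B)^k$, and the recursion $\rho_{k+1}=\rho_k'+\tfrac{1}{2}(-x)^{-1/2}\rho_k$ with the inductive verification of the closed form is identical. (The only slip is cosmetic: the large-$t$ decay of $F^{(k)}(-t^2)$ is $O(t^{-k}e^{-t})$, not $O(t^{-2k+1}e^{-t})$, which does not affect the a posteriori convergence check.)
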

\begin{proof}
Given in \textbf{Appendix} \ref{CatalanMC-1}.
\end{proof}

\subsection{Asymptotic Behavior of the Variances as $t \to \infty$}
It follows from (\ref{solution_mc}) that for $\, t \ge 0 \,$, the variance of the Gaussian process $X_{\cdot}^{i}$, $i \ge 1$, in \eqref{eq22} is given by  
\begin{equation} \label{eq: Var4.1}
\begin{split}
\text{Var} ( X_{t}^{i}) = \text{Var} ( X_{t}^{1}) \, &=\, \text{Var} \Big(  \sum_{j=1}^{\infty} \int^{t}_{0}  \frac{\,(t-s)^{2(j-1)} \,}{\,(j-1)!\,} F^{(j-1)}(-(t-s)^{2}) {\mathrm d} W_{s}^{j} \Big)  
\\
\, &=\,  \sum_{j=0}^{\infty} \int^{t}_{0} \frac{\,(t-s)^{4j}\,}{\,(j!)^{2} \,} \lvert  \rho_{j} (- (t-s)^{2}) \rvert^{2} e^{-2(t-s)}{\mathrm d} s.
\end{split}
\end{equation}

\begin{remark}\label{rho_k and var}
To evaluate the variance, we need some estimates of $ \rho_{j}(\cdot)\,$, $\, j \in \mathbb N\,$ in \eqref{eq: rhokxProp1}. It can be shown that
\begin{equation} \label{eq: rho-k-nu2}
\rho_j(-\nu^2)=\frac{\,1\,}{\,2^{j} \nu^{j}\,}\cdot \sqrt{\frac{\,2 \nu \,}{\,\pi \,} } \cdot  e^{\nu} \cdot K_{j-(1/2)} (\nu ) \, ; \quad j \ge 1 \, ,  
\end{equation}
where $K_n(x)$ is the modified Bessel function of the second kind defined by 
\[
K_{n}(x)\, =\,  \int^{\infty}_{0} e^{-x \cosh t} \cosh (nt ) {\mathrm d} t \, ; \quad n > -1 ,  x > 0 .  
\]

Then substituting \eqref{eq: rho-k-nu2} into \eqref{eq: Var4.1}, we obtain 
\begin{equation} \text{\rm Var} ( X_{t}^{1})=\, \sum_{k=1}^{\infty}  \int^{t}_{0} \frac{\,2\,}{\,\pi\,}  \frac{\,\nu^{2k+1}\,}{\,(k!)^{2}\, 4^{k}\,} \big( K _{k-(1/2)}(\nu) \big)^{2} {\mathrm d} \nu + \frac{\,1 - e^{-2t}\,}{\,2\,}; \quad t \ge 0 . 
\end{equation}
Details are given in the Appendix \ref{appendix3}.
\end{remark}

\begin{prop}\label{finite_limit_var} As $t\to \infty $, the asymptotic variance $\lim_{t\to \infty }\text{\rm Var} ( X_{t}^{1})$ is $1/ \sqrt{ 2 } $.  
\end{prop}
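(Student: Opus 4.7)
The plan is to start from the explicit series representation of $\Var(X_t^1)$ already furnished in Remark \ref{rho_k and var}, pass to the limit $t\to\infty$ inside the sum, and evaluate each integral in closed form by invoking a classical Bessel identity that turns the series into a Catalan-number generating function evaluated at the boundary of convergence.

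First, since every integrand $\nu^{2k+1}(K_{k-1/2}(\nu))^2$ is non-negative, monotone convergence permits interchanging the limit with the summation. The explicit term $(1-e^{-2t})/2$ contributes $1/2$ in the limit, so the task reduces to showing
\begin{equation*}
S := \sum_{k=1}^\infty \frac{2}{\pi(k!)^2 4^k}\int_0^\infty \nu^{2k+1}\bigl(K_{k-1/2}(\nu)\bigr)^2\,\mathrm{d}\nu \;=\; \frac{1}{\sqrt{2}}-\frac{1}{2}.
\end{equation*}
Each integral I would evaluate by the classical Mellin identity
\begin{equation*}
\int_0^\infty x^{2\mu-1}K_\nu(x)^2\,\mathrm{d}x = \frac{\sqrt{\pi}\,\Gamma(\mu)\,\Gamma(\mu-\nu)\,\Gamma(\mu+\nu)}{4\,\Gamma(\mu+1/2)}\qquad (\mu>|\nu|),
\end{equation*}
with the choice $\mu=k+1,\,\nu=k-1/2$. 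Applying the duplication formula $\Gamma(n+1/2)=(2n)!\sqrt{\pi}/(4^n n!)$ to the ratio $\Gamma(2k+1/2)/\Gamma(k+3/2)$ collapses the $k$-th summand to
\begin{equation*}
\frac{(4k)!}{2\cdot 16^k\,(2k)!(2k+1)!} \;=\; \frac{C_{2k}}{2\cdot 16^k},
\end{equation*}
where $C_n=(2n)!/(n!(n+1)!)$ is the $n$-th Catalan number.

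Finally I would sum the resulting series by extracting the even-index part of the Catalan generating function $C(x)=(1-\sqrt{1-4x})/(2x)$:
\begin{equation*}
\sum_{k\geq 0} C_{2k}\,x^{2k} = \frac{C(x)+C(-x)}{2} = \frac{\sqrt{1+4x}-\sqrt{1-4x}}{4x},
\end{equation*}
evaluating at $x=1/4$, which lies on the boundary of convergence but where both $C(1/4)=2$ and $C(-1/4)=2(\sqrt{2}-1)$ are finite. This gives $\sum_{k\geq 0}C_{2k}/16^k=\sqrt{2}$, hence $S=(\sqrt{2}-1)/2$, and adding the $1/2$ from the boundary term yields the claimed limit $1/\sqrt{2}$. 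The main obstacle is the collapse in the second step: matching the Gamma-function ratios produced by the Bessel identity against the Catalan numerator $(4k)!/((2k)!(2k+1)!)$ is the only point where careful bookkeeping is required, while the interchange of limits and the generating-function evaluation at the radius of convergence are routine.
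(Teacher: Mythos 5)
Your proposal is correct and follows essentially the same route as the paper's Appendix proof: the same Mellin-type integral $\int_0^\infty x^{2\mu-1}K_\nu(x)^2\,\mathrm{d}x$ with $\mu=k+1$, $\nu=k-1/2$, the same Gamma-function simplification to the summand $\tfrac{1}{2}\binom{4k}{2k}\tfrac{1}{(2k+1)\,16^k}=\tfrac{C_{2k}}{2\cdot 16^k}$, and a closed-form evaluation of the resulting series. The only difference is cosmetic: the paper sums the series via the identity $\tfrac{\sqrt{2}}{4}x\sqrt{x^2-\sqrt{x^4-16}}=\sum_{k\ge 0}\binom{4k}{2k}\tfrac{1}{2k+1}x^{-4k}$ at $x=2$, whereas you bisect the standard Catalan generating function at $x=1/4$ (which is legitimate since $C_{2k}/16^k\sim k^{-3/2}$ gives absolute convergence at the boundary), and both yield $\sum_{k\ge 0}C_{2k}/16^k=\sqrt{2}$.
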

\begin{proof}
Given in Appendix \ref{appendixvariance}.
\end{proof}

\subsection{Asymptotic Independence}

With $X_0=0$, it follows from Proposition \ref{sol_markovc} and Remark \ref{rho_k and var} that:
\begin{equation}\label{xforcovariance}
\begin{split}
X_{t}^{i}
&=\sum\limits_{j=0}^\infty \int_0^t \frac{1}{\sqrt{\pi} j!}  \frac{(t-s)^{i+1/2}}{2^{j-1/2}} K_{j-1/2}(t-s) {\mathrm d} W_{j+i}(s); \quad t \ge 0 . 
\end{split}
\end{equation}
Then the auto-covariance and cross-covariance are given respectively by: 
\begin{equation}
\begin{split}
&\mathbbm{E}[X_s^{1}X_t^{1}]=\sum\limits_{j=0}^\infty \int_0^s \frac{1}{\pi (j!)^2 2^{2j-1}}  ((t-s+u)u)^{j+1/2}  K_{j-1/2}(t-s+u) K_{j-1/2}(u) {\mathrm d} u ,\quad 0\leq s\leq t, \label{autocov}\\
&\mathbbm{E}[X_t^{1}X_t^{j+1}]= \sum\limits_{\ell=0}^\infty  \frac{1}{\pi (j+\ell)!\ell!} \frac{1}{2^{j+2\ell-1}}  \int_0^t s^{j+2\ell+1} K_{j+\ell-1/2}(s) K_{\ell-1/2}(s) {\mathrm d} s,\quad t\geq 0.
\end{split}
\end{equation}


Their asymptotic behaviors are summarized in the following. The proofs are given in Appendix \ref{appendixcovariance}.

\begin{prop}[Asymptotic behavior of the auto-covariance] \label{aurocov_sim} 
According to  \eqref{autocov}, the auto-covariance $\mathbbm{E}[X_s^{1}X_t^{1}]$ is positive since $K_n(x)>0$. Fixing $s>0$, when $t-s\to \infty$, it converges to $0$, i.e., the process is ergodic. 
\end{prop}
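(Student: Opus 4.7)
The plan splits into two pieces matching the two assertions: positivity is immediate from the explicit series, and the decay rests on the Feynman--Kac/Catalan Markov chain representation of $X^{1}$.

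For positivity, every term in the series representation of $\mathbbm{E}[X_s^1 X_t^1]$ is non-negative. The numerical coefficient $1/(\pi (j!)^2 2^{2j-1})$ is strictly positive, the factor $((t-s+u)u)^{j+1/2}$ is positive on $(0,s)$ whenever $t>s$, and the modified Bessel function $K_{\nu}$ is strictly positive on $(0,\infty)$ for every real $\nu$ (evident from the integral representation in Remark \ref{rho_k and var}). The $j=0$ term alone is already strictly positive, so $\mathbbm{E}[X_s^1 X_t^1]>0$.

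For the decay as $t-s \to \infty$ with $s$ fixed, I would return to Proposition \ref{sol_markovc} and write $X_t^{1}=\sum_{j\ge 1}\int_0^{t} p_{1,j}(t-u)\,{\mathrm d}W_u^{j}$ with $p_{1,j}(\tau)=(e^{\tau \mathbf{Q}})_{1,j}$. Independence of the Brownian motions $W^{j}$ and the It\^o isometry yield
\[
\mathbbm{E}[X_s^1 X_t^1]=\sum_{j=1}^{\infty} \int_0^{s} p_{1,j}(s-u)\,p_{1,j}(t-u)\,{\mathrm d}u.
\]
Two structural facts about the Catalan chain $M(\cdot)$ then finish the proof. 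First, the row sums of $\mathbf{Q}$ vanish: $\sum_{j} q_{1,j}=-1+\sum_{k\ge 1}(-\phi^{k})=-1+\phi^{0}=0$ by Lemma \ref{inf_sumo}, so $\sum_{j\ge 1} p_{1,j}(\tau)=1$ and in particular $0\le p_{1,j}(\tau)\le 1$. Hence $p_{1,j}(s-u)p_{1,j}(t-u)\le p_{1,j}(s-u)$, and this majorant has finite total mass $\sum_{j}\int_0^{s}p_{1,j}(s-u)\,{\mathrm d}u=s$. Second, $M(\cdot)$ is strictly upward since $\mathbf{Q}$ is upper triangular with $q_{i,i}=-1$, so if $N$ denotes the rate-$1$ Poisson process counting jumps we have $M(t)\ge 1+N(t)$, which implies $p_{1,j}(\tau)\le \mathbbm{P}(N(\tau)\le j-1)\to 0$ as $\tau \to \infty$ for every fixed $j$. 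Dominated convergence applied to the integral in $u$ and then to the sum in $j$ gives $\lim_{t\to\infty}\mathbbm{E}[X_s^1 X_t^1]=0$, which is the claimed ergodicity.

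The only nontrivial point is producing a majorant that is summable in $j$ and integrable in $u$ at the same time; both are furnished in one stroke by the row-sum identity, so the structural input is the summation identity $\sum_{k\ge 0}\phi^{k}=0$ from Lemma \ref{inf_sumo}. Everything else is routine.
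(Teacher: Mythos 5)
Your positivity argument is the same as the paper's: every term of the series \eqref{autocov} is nonnegative because $K_\nu>0$ on $(0,\infty)$, and the $j=0$ term is strictly positive. For the decay, however, you take a genuinely different route. The paper stays with the Bessel-function series: using the asymptotic expansion of $K_\alpha(z)$ for large $z$, it produces for all large $t$ a uniform bound $\sup_{i\ge0}\frac{1}{i!\,t^{i+1}}\int_0^s((t-s+v)v)^{i+1/2}K_{i-1/2}(t-s+v)K_{i-1/2}(v)\,\mathrm{d}v\le c\,e^{-(t-s)}$, sums the resulting series to get $\mathbbm{E}[X_s^1X_t^1]\le\frac{4ct}{\pi}e^{-(t-s)+t/4}$, and so obtains an explicit exponential rate of decay. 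You instead exploit the probabilistic structure of the Catalan chain: conservativeness ($\sum_{k\ge1}p_k=1$, equivalently $\sum_{k\ge0}\phi^k=0$) gives $\sum_jp_{1,j}(\tau)=1$ and $0\le p_{1,j}(\tau)\le1$, which supplies in one stroke the majorant $p_{1,j}(s-u)$ of total mass $s$; strict monotonicity of the chain with unit jump rate gives $p_{1,j}(\tau)\le\mathbb{P}(N(\tau)\le j-1)\to0$ pointwise; dominated convergence finishes. Both arguments are correct and start from the same It\^o-isometry representation. Yours is softer --- it yields convergence to zero with no rate --- but it avoids the Bessel asymptotics entirely and makes transparent that transience of the Catalan chain is the mechanism behind the decorrelation; the paper's computation buys a quantitative exponential bound. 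One small bookkeeping point: the identity you invoke is the stationary one, $\sum_{k\ge1}p_k=1$, stated at the start of Appendix \ref{CatalanMC-1} as the $T\to\infty$ analogue of Lemma \ref{inf_sumo}; it is worth citing it in that form rather than as the time-dependent statement of the lemma.
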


\begin{prop}[Asymptotic behavior of the cross-covariance] \label{crosscov_sim} 
Similarly, for every $k \ge 0 $ and  for any $t>0$ the cross-covariance $\mathbbm{E}[X_t^{0}X_t^{k}]$ is positive, and 
\[
0< \lim\limits_{t\to\infty} \mathbbm{E}[X_t^{0}X_t^{k}]=\sum\limits_{j=0}^\infty  \frac{1}{\pi (k+j)!j!} \frac{1}{2^{k+2j-1}}  \int_0^\infty s^{k+2j+1} K_{k+j-1/2}(s) K_{j-1/2}(s) {\mathrm d} s \le \frac{1}{\sqrt{2}}.
\]
The asymptotic cross-covariance is positive and bounded above, which means the states are asymptotically dependent in the directed chain game.
\end{prop}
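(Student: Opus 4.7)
The statement has three parts: (a) positivity of $\mathbb{E}[X_t^{0}X_t^{k}]$ for every finite $t>0$, (b) identification of the $t\to\infty$ limit with the explicit series, and (c) the two-sided bound $0<\lim_{t\to\infty}\mathbb{E}[X_t^{0}X_t^{k}]\le 1/\sqrt{2}$. I would take them in that order.

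For (a), start from the closed form for the cross-covariance displayed just above the statement (after re-indexing by the shift-invariance of the infinite-player system noted following \eqref{eq: OLNashInf1}, so that $\mathbb{E}[X_t^{0}X_t^{k}]$ equals $\mathbb{E}[X_t^{1}X_t^{k+1}]$). Every combinatorial prefactor is positive, and $K_\nu(s)>0$ for all $s>0$ and $\nu\in\mathbb{R}$, so each summand is the integral of a strictly positive integrand over $(0,t)$, hence strictly positive. Thus the full series is strictly positive for every $t>0$.

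For (b), because every summand and every integrand is nonnegative, the partial integrals $\int_0^t s^{k+2\ell+1}K_{k+\ell-1/2}(s)K_{\ell-1/2}(s)\,{\mathrm d} s$ are nondecreasing in $t$, and monotone convergence lets me interchange $\lim_{t\to\infty}$ with both the sum in $\ell$ and the $s$-integral, arriving at the claimed formula provided each improper integral $\int_0^\infty s^{k+2\ell+1}K_{k+\ell-1/2}(s)K_{\ell-1/2}(s)\,{\mathrm d} s$ is finite. Finiteness follows from the classical asymptotics: near $s=0$, $K_\nu(s)\sim \tfrac12\Gamma(\nu)(2/s)^\nu$ for $\nu>0$ (with the closed form $K_{-1/2}(s)=\sqrt{\pi/(2s)}\,e^{-s}$ handled separately), giving integrand behavior $\sim s^{2}$ near $0$; near $s=\infty$, $K_\nu(s)\sim\sqrt{\pi/(2s)}\,e^{-s}$, giving integrand decay $\sim s^{k+2\ell}e^{-2s}$. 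Both ends are integrable, so termwise passage is justified.

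For (c), I would invoke Cauchy--Schwarz followed by Proposition \ref{finite_limit_var}. By shift-invariance of the infinite-player dynamics, $\Var(X_t^{k})=\Var(X_t^{0})$ for every $k\ge 0$, so
\[
0<\mathbb{E}[X_t^{0}X_t^{k}]\;\le\;\sqrt{\Var(X_t^{0})\,\Var(X_t^{k})}\;=\;\Var(X_t^{0}).
\]
Sending $t\to\infty$ and applying Proposition \ref{finite_limit_var} gives the upper bound $1/\sqrt{2}$. Strict positivity of the limit is immediate from the $\ell=0$ term, which is a convergent integral of a strictly positive function. The main (mild) obstacle is the integrability verification at $s=0^{+}$ using Bessel asymptotics; everything else is nonnegativity plus monotone convergence plus Cauchy--Schwarz, so no further technical input is required.
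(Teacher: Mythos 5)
Your proposal is correct and follows essentially the same route as the paper's Appendix \ref{appendixcovariance}: positivity from $K_\nu>0$, passage to the limit $t\to\infty$ in the series of Bessel integrals, and the upper bound $1/\sqrt{2}$ via Cauchy--Schwarz combined with Proposition \ref{finite_limit_var} and the identical distribution of $X^{1}_\cdot$ and $X^{k+1}_\cdot$. Your added monotone-convergence and small-/large-$s$ integrability checks only make explicit steps the paper leaves implicit.
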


\section{Mixture of Directed Chain and Mean Field Interaction on a Finite-player System }\label{section 5}
In the spirit of the paper, we shall look at the game on a mixed system, including the directed chain interaction and the mean field interaction for finite players. This section repeats the same steps as before to analyze the mixed system game for $N$ players. 
We assume the state dynamics of all the payers are of the form: 
\begin{equation*}
    {\mathrm d} X_t^i=\a_t^i {\mathrm d}t+\s {\mathrm d}W_t^i,
\end{equation*}
for $1 \le i \le N$ as in the previous sections.
In this model, player $i$ chooses its own strategy $\alpha^i$ in order to minimize its objective function of the mixed form:
\begin{equation}
\begin{split}
   i\leq N-1:\quad\quad J^i(\a^1,\cdots,\a^N)=\mathbbm{E} \bigg\{\displaystyle \int_0^T \big( \frac{1}{2}(\a^i_t)^2&+u\cdot \frac{\e}{2}(X_t^{i+1}-X_t^{i})^2+(1-u)\cdot \frac{\e}{2}(\bar{X}_t-X_t^i)^2\big) {\mathrm d}t\\
   &+u\cdot \frac{c}{2}(X_T^{i+1}-X_T^{i})^2+(1-u)\cdot\frac{c}{2}(\bar{X}_T-X_T^i)^2\bigg\},
\end{split}
\end{equation}
for some positive constants $\e,c$ and a weight $u\in [0,1]$. Each player optimizes the cost determined by the mixture of two criteria: distance from the neighbor in the directed chain with weight $u$ and distance from the empirical mean with weight $1 - u$. The notation $\bar{X}_t$ is defined as the empirical mean, i.e., $\bar{X}_t= (X_{t}^{1} + \cdots + X_{t}^{N}) / N $ 
for $t \ge  0 $. The running cost function is defined by 
\begin{equation}
f^i(x,\a^i)=\frac{1}{2}(\a^i)^2+u \cdot \frac{\e}{2}(x^{i+1}-x^{i})^2+(1-u)\cdot \frac{\e}{2}(\bar{x}-x^i)^2 
\end{equation}
and the terminal cost function is defined by \begin{equation} g^i(x)=u \cdot \frac{c}{2}(x^{i+1}-x^{i})^2+(1-u)\cdot \frac{c}{2}(\bar{x}-x^i)^2  , 
\end{equation}
where $\bar{x}$ is defined by $\bar{x} = (x_{1}+ \cdots + x_{N})/N$. 
The system is again completed by describing the behavior of player $N$. For simplicity, we consider the boundary condition of the system where $X^N$ is attracted to $0$. Then we can compare the result with that of Section \ref{BC-attractedto0}. The cost functional for player $N$ is given by:
\begin{equation}
\begin{split}
J^N(\alpha^N)=\mathbbm{E} \bigg\{\displaystyle  \int_0^T \big(\frac{1}{2}(\alpha^N_t)^2&+u\cdot \frac{\epsilon}{2}(X_t^N)^2 +(1-u)\cdot \frac{\e}{2}(\bar{X}_t-X_t^N)^2\big) {\mathrm d}t\\
&+u\cdot \frac{c}{2}(X_T^N)^2+(1-u)\cdot \frac{c}{2}(\bar{X}_T-X_T^N)^2\bigg\}.
\end{split}
\end{equation}
The running cost function is defined by 
\begin{equation} 
f^N(x, \alpha^N)=\frac{1}{2}(\alpha^N)^2+u\cdot \frac{\epsilon}{2}(x^N)^2+(1-u)\cdot \frac{\e}{2}(\bar{x}-x^N)^2 
\end{equation} 
and the terminal cost function is defined by 
\begin{equation}
g^N(x)=u\cdot \frac{c}{2}(x^N)^2+(1-u)\cdot \frac{c}{2}(\bar{x}-x^N)^2. 
\end{equation} 
 If $u=1$, the system becomes the directed chain system discussed before. If $u=0$, it becomes a mean-field system where each player is attracted towards the mean of the system.

\subsection{Open-Loop Nash Equilibrium}
We search for an open-loop Nash equilibrium of the system among strategies $\{\a_t^i,i=1,\cdots,N\}$.
The Hamiltonian $H^{i}$ for player $i$ is given by:
\begin{equation*}
    H^i(x^1,\cdots,x^N,y^{i,1},\cdots,y^{i,N},\a^1,\cdots,\a^N)=\sum\limits_{k=1}^N \a^k y^{i,k}+\frac{1}{2}(\a^i)^2+u\frac{\e}{2}(x^{i+1}-x^i)^2 +(1-u)\frac{\e}{2}(\bar{x}-x^{i})^2,
\end{equation*}
and the Hamiltonian $H^{N}$ for player $N$ is given by:
\begin{equation*}
    H^N(x^1,\cdots,x^N,y^{i,1},\cdots,y^{i,N},\a^1,\cdots,\a^N)=\sum\limits_{k=1}^N \a^k y^{i,k}+\frac{1}{2}(\a^i)^2+u\frac{\e}{2}(x^{N})^2 +(1-u)\frac{\e}{2}(\bar{x}-x^{i})^2.
\end{equation*}

The value of $\alpha^i$ minimizing the Hamiltonian with respect to $\alpha^i$ is given by:
\begin{equation*}
\partial_{\alpha^i} H^i=y^{i,i}+\alpha^i=0\quad \text{leading to the choice:}\quad \hat{\alpha}^i=-y^{i,i}.
\end{equation*}
The adjoint processes $Y_t^i=(Y_t^{i,j};j=1,\cdots,N)$ and $Z_t^i=(Z_t^{i,j,k};j=1,\cdots,N,k=1,\cdots,N)$ for $i=1,\cdots,N$ are defined as the solutions of the backward stochastic differential equations (BSDEs):
\begin{equation}
\begin{array}{ll}
 i<N: & \left\{
  \begin{array}{ll}
    {\mathrm d}Y_t^{i,j}&=-\partial_{x^j}H^i(X_t,Y_t^i,\a_t){\mathrm d}t+ \displaystyle \sum_{k=0}^N Z_t^{i,j,k}{\mathrm d}W_t^k\\
    &= \displaystyle - \Big [ u\e (X_t^{i+1}-X_t^i)(\d_{i+1,j}-\d_{i,j})+(1-u)\e(\bar{X}_t-X_t^i) \Big (\frac{1}{N}-\d_{i,j} \Big)\Big ]  {\mathrm d}t+ \sum_{k=0}^N Z_t^{i,j,k} {\mathrm d}W_t^k,\\
    Y_T^{i,j}&=\partial_{x^j}g_i(X_T)=uc(X_T^{i+1}-X_T^i)(\d_{i+1,j}-\d_{i,j})+(1-u)c(\bar{X}_T-X_T^i)(\frac{1}{N}-\d_{i,j}).
  \end{array}
\right.\\
i=N: & \left\{
    \begin{array}{ll}
       {\mathrm d}Y_t^{N,j}&= \displaystyle -\Big [ u\e X_t^{N}\d_{N,j}+(1-u)\e(\bar{X}_t-X_t^N)\Big (\frac{1}{N}-\d_{N,j} \Big)\Big]{\mathrm d}t+\sum\limits_{k=0}^N Z_t^{N,j,k}{\mathrm d}W_t^k,\\
    Y_T^{N,j}&= \displaystyle ucX_T^{N}\d_{N,j}+(1-u)c(\bar{X}_T-X_T^N) \Big (\frac{1}{N}-\d_{N,j}\Big).
    \end{array}
    \right.
\end{array}
\end{equation}
When $j=i$, it becomes:
\begin{equation}\label{ito-decomp-first}
\begin{array}{lll}
 &\left\{
  \begin{array}{ll}
    {\mathrm d}Y_t^{i,i}
    &= \displaystyle \Big [ u\e (X_t^{i+1}-X_t^i)+(1-u)\e(\bar{X}_t-X_t^i)(1-\frac{1}{N})\Big] {\mathrm d}t+\sum_{k=0}^N Z_t^{i,i,k}{\mathrm d}W_t^k,\\
    
    Y_T^{i,i}&= \displaystyle -uc(X_T^{i+1}-X_T^i)-(1-u)c(\bar{X}_T-X_T^i)\Big(1-\frac{1}{N}\Big),\quad i<N \\
  
       {\mathrm d}Y_t^{N,N}&= \displaystyle \Big[ -u\e X_t^{N}+(1-u)\e(\bar{X}_t-X_t^N)\Big(1-\frac{1}{N}\Big)\Big]  {\mathrm d}t+\sum_{k=0}^N Z_t^{N,N,k} {\mathrm d}W_t^k,\\
    Y_T^{N,N}&= \displaystyle ucX_T^{N}-(1-u)c(\bar{X}_T-X_T^N)\Big(1-\frac{1}{N}\Big).
    \end{array}
    \right.
\end{array}
\end{equation}

Considering the BSDE system and the initial condition, we then make the following ansatz with function parameters depending on $N$:
\begin{equation}\label{ansatz-mixed-finite}
    Y_t^{i,i}=u\sum\limits_{j=i}^N \phi_t^{N,i,j}X_t^j-(1-u)(\bar{X}_t-X_t^i)\theta_t^N,
\end{equation}
for some deterministic scalar functions $\phi_t,\theta_t$ satisfying the terminal condition: when $i<N$, $\phi_T^{N,i,i}=c,\phi_T^{N,i,i+1}=-c,\phi_T^{N,i,j}=0$ for $N\geq j\geq i+2$; $\phi_T^{N,N,N}=c$ and $\theta_T^N=c(1-\frac{1}{N})$. For simplicity of notation, we denote $\theta_t=\theta_t^N$.
Using the ansatz \eqref{ansatz-mixed-finite}, the optimal strategy and forward equation become:
\begin{equation*}
    \left\{
  \begin{array}{ll}
    &\Hat{\a}^i=-Y_t^{i,i}=-u\displaystyle \sum_{j=i}^N \phi_t^{N,i,j}X_t^j+(1-u)(\bar{X}_t-X_t^i)\theta_t,\\
    & {\mathrm d} X_t^j=\Big[-u \displaystyle \sum_{k=j}^N \phi_t^{N,j,k}X_t^k+(1-u)(\bar{X}_t-X_t^j)\theta_t \Big]{\mathrm d}t+\s {\mathrm d}W_t^j.
  \end{array}
\right.
\end{equation*}
By taking the average, 
we obtain 
\begin{equation*}
\begin{split}
{\mathrm d}\bar{X}_t&=-u \cdot 
\frac{1}{N} \sum_{j=1}^N\sum_{k=j}^N \phi_t^{N,j,k}X_t^k
{\mathrm d}t+\s \cdot \frac{1}{N}\sum_{j=1}^N {\mathrm d}W_t^j 
=-u \cdot \frac{1}{N} \sum_{k=1}^N(\sum_{j=1}^k \phi_t^{N,j,k})X_t^k {\mathrm d}t+\s \cdot \frac{1}{N}\sum_{k=1}^N {\mathrm d}W_t^k 
\end{split} 
\end{equation*}
and then 
\begin{equation}\label{mean-diff-eqn}
\begin{split}
{\mathrm d}(\bar{X}_t-X_t^i)
&=-u \cdot \frac{1}{N} \sum\limits_{k=1}^{i-1}(\sum\limits_{j=1}^k \phi_t^{N,j,k})X_t^k {\mathrm d}t+u\sum\limits_{k=i+1}^N \Big(\phi_t^{N,i,k}-\frac{1}{N}\sum\limits_{j=1}^k \phi_t^{N,j,k} \Big)X_t^k {\mathrm d}t\\
& \quad {}+\Big(u\phi_t^{N,i,i}-u\frac{1}{N}\sum\limits_{j=1}^i \phi_t^{N,j,i}+(1-u)\theta_t\Big)X_t^i {\mathrm d}t-(1-u)\bar{X}_t\theta_t {\mathrm d}t\\
& \quad {}+\s \Big(\frac{1}{N}\sum\limits_{k=1}^N {\mathrm d}W_t^k-ud	W_t^i \Big).
\end{split} 
\end{equation}
Differentiating the ansatz (\ref{ansatz-mixed-finite}) and using (\ref{mean-diff-eqn}), we obtain 
\begin{equation}\label{ito-dicomp-mix}
  \begin{array}{ll}
    {\mathrm d}Y_t^{i,i}&=u\cdot \displaystyle \sum_{j=i}^N [X_t^j\Dot{\phi}_t^{N,i,j} {\mathrm d}t+\phi_t^{N,i,j}dX_t^j]-(1-u)\cdot \left(\Dot{\theta}_t (\bar{X}_t-X_t^i){\mathrm d}t+\theta_t {\mathrm d}(\bar{X}_t-X_t^i)\right)\\
    &\stackrel{\text{def}}{=}u\cdot \Romannum{1}-(1-u)\cdot\Romannum{2}.
    \end{array}
\end{equation}
For the first term, we have 
\begin{equation*}
\begin{split} 
  \Romannum{1}&=\sum\limits_{j=i}^N [X_t^j\Dot{\phi}_t^{N,i,j} {\mathrm d}t+\phi_t^{N,i,j} {\mathrm d}X_t^j]\\
     &=\sum\limits_{k=i}^N \big( \dot{\phi}_t^{N,i,k}-u\sum\limits_{j=i}^k \phi_t^{N,i,j}\phi_t^{N,j,k}-(1-u)\theta_t  \phi_t^{N,i,k}\big) X_t^k {\mathrm d}t
     +(1-u)\theta_t \sum\limits_{k=i}^N \phi_t^{N,i,k} \cdot\bar{X}_tdt+\s \sum\limits_{k=i}^N\phi_t^{N,i,k}dW_t^k.
 \end{split} 
\end{equation*}
Then, for the second term, we have 
\begin{equation}
\begin{split} 
   \Romannum{2}&=\Dot{\theta}_t(\bar{X}_t-X_t^i) {\mathrm d}t+\theta_t {\mathrm d} (\bar{X}_t-X_t^i) \\
&=-u\theta_t\frac{1}{N} \sum\limits_{k=1}^{i-1}(\sum\limits_{j=1}^k \phi_t^{N,j,k})X_t^k dt+u\theta_t \sum\limits_{k=i+1}^N(\phi_t^{N,i,k}-\frac{1}{N}\sum\limits_{j=1}^k \phi_t^{N,j,k})X_t^k dt\\
& \quad {} -[\dot{\theta}_t-u\theta_t(\phi_t^{N,i,i}-\frac{1}{N}\sum\limits_{j=1}^i\phi_t^{N,j,i} )-(1-u)\theta_t^2]X_t^idt\\
&\quad {} +(\dot{\theta}_t-(1-u)\theta_t^2) \bar{X}_tdt+\s(\frac{1}{N}\sum\limits_{k=1}^NdW_t^k-dW_t^i).
\end{split} 
\end{equation}
 
Thus $ {\mathrm d} Y_{t}^{i,i} = u\cdot \Romannum{1}-(1-u)\cdot\Romannum{2} $  in (\ref{ito-dicomp-mix})
can be written as:
\begin{equation} \label{ito-decomp-second}
\begin{split}
&\sum\limits_{k=1}^{i-1}(u(1-u)\theta_t\frac{1}{N} \sum\limits_{j=1}^k \phi_t^{N,j,k})X_t^k {\mathrm d}t\\
&\quad {} +\sum\limits_{k=i+1}^N [u\dot{\phi}_t^{N,i,k}-u^2\sum\limits_{j=i}^k \phi_t^{N,i,j}\phi_t^{N,j,k}-u(1-u)\theta_t  \phi_t^{N,i,k}-u(1-u)\theta_t(\phi_t^{N,i,k}-\frac{1}{N}\sum\limits_{j=1}^k\phi_t^{N,j,k}) ]X_t^k {\mathrm d}t\\
&\quad {} +[u\dot{\phi}_t^{N,i,i}-u^2(\phi_t^{N,i,i})^2 -2u(1-u)\theta_t\phi_t^{N,i,i}+(1-u)\dot{\theta}_t+u(1-u)\theta_t\frac{1}{N}\sum\limits_{j=1}^i\phi_t^{N,j,i} -(1-u)^2\theta_t^2 ]X_t^i {\mathrm d}t\\
&\quad {} +[ u(1-u)\theta_t \sum\limits_{k=i}^N \phi_t^{N,i,k}-(1-u)\dot{\theta}_t+(1-u)^2\theta_t^2]\bar{X}_t {\mathrm d}t       \\
&\quad {} + u\s  \sum\limits_{k=i}^N\phi_t^{N,i,k} {\mathrm d}W_t^k-(1-u)\s\theta_t (\frac{1}{N}\sum\limits_{k=1}^N {\mathrm d}W_t^k-dW_t^i)     
\end{split} 
\end{equation}

Now we compare the two It\^o's decompositions (\ref{ito-decomp-first}) and (\ref{ito-decomp-second}). The martingale terms give the processes 
$Z_t^{i,j,k}$:
\begin{equation*}
\begin{array}{ll}
&Z_t^{i,i,k}=-(1-u)\s \theta_t\frac{1}{N}\quad\text{ for } \quad k<i,\\
&Z_t^{i,i,i}=u\s\phi_t^{N,i,i}+(1-u)\s\theta_t(1-\frac{1}{N}) \quad \text{ and } \quad \ Z_t^{i,i,k}=u\s\phi_t^{N,i,k}\quad\text{ for }\quad k>i.
\end{array}
\end{equation*}

And from the drift terms, we get the following system of ordinary differential equations:\\
when $i<N$,
\begin{equation}\label{mixed-finite-riccati}
\begin{array}{lll}
 \text{for} \ i:&\displaystyle u\Dot{\phi}_t^{N,i,i}-u^2(\phi_t^{N,i,i})^2-2u(1-u)\theta_t\phi_t^{N,i,i}+(1-u)\Dot{\theta}_t \Big (1-\frac{1}{N} \Big)-(1-u)^2\theta_t^2 \Big(1-\frac{1}{N} \Big)\\
     & \quad  \displaystyle {} +u(1-u)\theta_t\frac{1}{N} \big ( \displaystyle \sum_{j=1}^{i}\phi_t^{N,j,i}+\sum_{k=i}^{N}\phi_t^{N,i,k} \big)=-u\e-(1-u)\e \Big(1-\frac{1}{N} \Big)^2 ,\quad \phi_T^{N,i,i}=c,\\
\end{array} 
\end{equation}
\begin{equation*}
\begin{array}{lll}
\hspace{-2.5cm}     \text{for} \ i+1:&\ u\Dot{\phi}_t^{N,i,i+1}-u^2(\phi_t^{N,i,i}\phi_t^{N,i,i+1}+\phi_t^{N,i,i+1}\phi_t^{N,i+1,i+1}) \\
     & \displaystyle  \quad {} -2u(1-u)\theta_t\phi_t^{N,i,i+1}-(1-u)\dot{\theta}_t\frac{1}{N}+(1-u)^2\theta_t^2\frac{1}{N}\\
     & \displaystyle \quad {} +u(1-u)\theta_t\frac{1}{N} \big (\sum\limits_{j=1}^{i+1}\phi_t^{N,j,i+1}+\sum\limits_{k=i}^{N}\phi_t^{N,i,k} \big)\\
     & = \displaystyle u\e+ (1-u)\e \Big(1-\frac{1}{N} \Big)\frac{1}{N},\quad \phi_T^{N,i,i+1}=-c,\\
\end{array}
\end{equation*}
\begin{equation*}
\begin{array}{lll}
  \displaystyle    \text{for}\ \ell\geq i+2:&\ u\Dot{\phi}_t^{N,i,\ell}-u^2 \displaystyle \sum\limits_{j=i}^l\phi_t^{N,i,j}\phi_t^{N,j,\ell}-2u(1-u)\theta_t\phi_t^{N,i,\ell}-(1-u)\dot{\theta}_t\frac{1}{N}+(1-u)^2\theta_t^2\frac{1}{N}\\
     &\quad \displaystyle  {} +u(1-u)\theta_t\frac{1}{N} (\sum\limits_{j=1}^{l}\phi_t^{N,j,\ell}+\sum\limits_{k=i}^{N}\phi_t^{N,i,k})=(1-u)\e \Big(1-\frac{1}{N} \Big)\frac{1}{N}, \quad \phi_T^{N,i,\ell}=0,\\
     \text{and  }\ \ &\ \displaystyle  u(1-u)\theta_t \sum\limits_{k=i}^N \phi_t^{N,i,k}-(1-u)\Dot{\theta}_t+(1-u)^2\theta_t^2=(1-u)\e \Big(1-\frac{1}{N}\Big),\quad \theta_T=c \Big(1-\frac{1}{N} \Big);
\end{array}
\end{equation*}
 
When $i=N$,
\begin{equation}\label{mixed-finite-riccati-boundary}
\begin{array}{lll}
 
   \displaystyle   & \displaystyle  \ u\Dot{\phi}_t^{N,N,N}-u^2(\phi_t^{N,N,N})^2-2u(1-u)\theta_t\phi_t^{N,N,N}+(1-u)\Dot{\theta}_t \Big(1-\frac{1}{N} \Big) - (1-u)^2\theta_t^2 \Big(1-\frac{1}{N} \Big)\\
     & \displaystyle  \quad {} +u(1-u)\theta_t\frac{1}{N} (\sum\limits_{j=1}^{N}\phi_t^{N,j,N}+\phi_t^{N,N,N})=-u\e-(1-u)\e \Big(1-\frac{1}{N} \Big)^2 ,\quad \phi_T^{N,i,i}=c,\\
   
     \textit{and  }\ \ &\ \displaystyle  u(1-u)\theta_t  \phi_t^{N,N,N}-(1-u)\Dot{\theta}_t+(1-u)^2\theta_t^2=(1-u)\e \Big(1-\frac{1}{N}\Big),\quad \theta_T=c \Big(1-\frac{1}{N} \Big);
\end{array}
\end{equation}

When $u=1$, the systems (\ref{mixed-finite-riccati}) and (\ref{mixed-finite-riccati-boundary}) are exactly what we obtained for finite-player directed chain game in section \ref{section 2}. We have the similar conclusion that the boundary condition does not affect the functions $\phi_t^{N,i,j}\,(j<N)$ for all $i<N$. We can also compare the system (\ref{mixed-finite-riccati}) with  the system (\ref{eq36}) - (\ref{eq39}) we introduce later. Under suitable assumptions, the system (\ref{mixed-finite-riccati}) may converge, as the number $N$ of players  goes to infinity.

\section{Infinite-Player Game Model with Mean-Field Interaction} \label{section 6}

Motivated by Section \ref{section 5} and following Section \ref{section 3}, we can define a game with infinite players on a mixed system, including the directed chain interaction and the mean field interaction. This section searches for an open-loop Nash equilibrium and repeats the same steps as before to analyse the infinite mixed system game. We have a more general Catalan Markov chain and Table \ref{table:1} below shows the asymptotic behaviors of the variances and covariances as $t\to \infty$ for the process with different types of interactions. Comparing it with Table 1 in Detering, Fouque \& Ichiba \cite{Nils-JP-Ichiba2018DirectedChain}, we have similar conclusions except that our asymptotic variance of purely directed chain does not explode.

The game model is given by:
\begin{equation}\label{eq23}
    {\mathrm d}X_t^i=\a_t^i {\mathrm d}t+\s {\mathrm d}W_t^i;\quad i=1,2,\cdots, \quad 0\leq t\leq T, 
\end{equation}
where $ (W_t^i)_{0\leq t\leq T}$, $i\in\mathbb{N}$ are independent standard Brownian motions. We assume the same drift and diffusion coefficients and the initial conditions as the finite-player game.
By choosing $\a_t^i$, player $i$ tries to minimize:
\begin{equation*}
\begin{split}
     J^i(\a^1,\a^2,\cdots)=\mathbbm{E} \bigg\{ \displaystyle \int_0^T  \big( \frac{1}{2}(\a^i_t)^2&+u\cdot \frac{\e}{2}(X_t^{i+1}-X_t^{i})^2+(1-u)\cdot \frac{\e}{2}(m_t-X_t^{i})^2 \big) {\mathrm d}t\\
      &+u\cdot \frac{c}{2}(X_T^{i+1}-X_T^{i})^2+(1-u)\cdot \frac{c}{2}(m_T-X_T^{i})^2\bigg\},
\end{split}
\end{equation*}
for some positive constants $\e$, $c$ and $u\in [0,1]$.
Here, there is an issue in the choice of 
$m_t$. Intuitively, it should come from the finite-player mixed game described in Section \ref{section 5} as 
the limit of $\bar{X}_{\cdot}$ 
as $N\to\infty$. Combined with the fact that we had $\mathbbm{E}\{X^i_t\}$ independent of $i$, it is natural to set $m_t=\mathbbm{E}\{X^i_t\}$ and check afterwards that this mean value does not depend on $i$ de facto after solving the fixed point step.
Note that the case $u=0$ is very particular, and consists in solving the same mean field game problem for every $i$. The case $u=1$ has already been studied in Section \ref{section 3}, and therefore, in what follows, we concentrate on the case $u\in (0,1)$.

\smallskip
\subsection{Open-Loop Nash Equilibrium}
We search for Nash equilibria of the system among strategies $\{\a_t^i,i=1,2,\cdots\}$.
The Hamiltonian for individual $i$ is given by:
\begin{equation} \label{eq24}
    H^i(t, x^1,x^2,\cdots,y^{i,1},y^{i,2},\cdots,\a^1,\a^2,\cdots)=\slim \a^k y^{i,k}+\frac{1}{2}(\a^i)^2+u\frac{\e}{2}(x^{i+1}-x^i)^2 +(1-u)\frac{\e}{2}(m_t-x^{i})^2.
\end{equation}
The adjoint processes $Y_t^i=(Y_t^{i,j};j\geq 1)$ and $Z_t^i=(Z_t^{i,j,k};j\geq 1, k\geq 1)$ for $i=1,2,\cdots$ are defined as the solutions of the backward stochastic differential equations (BSDEs):
\begin{equation}\label{eq26}
    \left\{
  \begin{array}{ll}
    {\mathrm d}Y_t^{i,j}&=-\big\{u\e (X_t^{i+1}-X_t^i)(\d_{i+1,j}-\d_{i,j})+(1-u)\e(m_t-X_t^i)(-\d_{i,j})\big\} {\mathrm d}t+\slim Z_t^{i,j,k} {\mathrm d}W_t^k,\\
    Y_T^{i,j}&=\partial_{x^j}g_i(X_T)=uc(X_T^{i+1}-X_T^i)(\d_{i+1,j}-\d_{i,j})+(1-u)c(m_T-X_T^i)(-\d_{i,j}).
  \end{array}
\right.
\end{equation}
When $j=i$, it becomes:
\begin{equation} \label{eq27}
 \left\{
    \begin{array}{ll}
       {\mathrm d}Y_t^{i,i}&=\big\{u\e (X_t^{i+1}-X_t^i)+(1-u)\e(m_t-X_t^i)\big\} {\mathrm d}t+\slim Z_t^{i,i,k} {\mathrm d}W_t^k,\\
    Y_T^{i,i}&=-uc(X_T^{i+1}-X_T^i)-(1-u)c(m_T-X_T^i).
    \end{array}
    \right.
\end{equation}
\\According to the Pontryagin stochastic maximum principle, by minimizing the Hamiltonian $H^i$ with respect to $\a^i$, we can get the optimal strategy: $\Hat{\a}^i=-y^{i,i}.$
Then the forward equation becomes:
\begin{equation}\label{eq29}
    {\mathrm d} X_t^i=-Y_t^{i,i} {\mathrm d}t+\s {\mathrm d}W_t^i.
\end{equation}
Similar as Carmona, Fouque, and Sun \cite{CarmonaFouqueSunSystemicRisk}, we define $m_t^X=\mathbbm{E}(X_t)$ and $m_t^Y=\mathbbm{E}(Y_t)$. In equilibrium, we have: $m_t^X=m_t$ for $t\leq T$.\\
Taking expectation in  (\ref{eq27}), we have: $dm_t^Y=0$ and $m_T^Y=0$, and hence,  
$m_t^Y=0$ for $t\leq T$.\\
Taking expectation in (\ref{eq29}) we get: $ {\mathrm d}m_t^X=-m_t^Y {\mathrm d} t=0$ and $m_0^X=\mathbbm{E}(\xi)=0$, and hence, 
 $m_t^X=0$ for $t \le T$.
\\
Now we make the ansatz:
\begin{equation}\label{eq30}
    Y_t^{i,i}=u\slimj \phi_t^{j-i}X_t^j-(1-u)(m_t-X_t^i)\psi_t,
\end{equation}
for some deterministic scalar functions $\phi_t,\psi_t$ satisfying the terminal condition $\phi_T^0=c,\phi_T^1=-c,\phi_T^k=0$ for $k\geq 2$ and $\psi_T=c$.
Using this ansatz, the forward equation (\ref{eq23}) becomes 
\begin{equation}\label{eq31}
    \left\{
  \begin{array}{ll}
    &\Hat{\a}^i=-Y_t^{i,i}=-u\slimj \phi_t^{j-i}X_t^j+(1-u)(m_t-X_t^i)\psi_t,\\
    & {\mathrm d}X_t^i= \big( -u\slimj \phi_t^{j-i}X_t^j+(1-u)(m_t-X_t^i)\psi_t \big) {\mathrm d} t+\s {\mathrm d}W_t^i.
  \end{array}
\right.
\end{equation}
\\
Using (\ref{eq31}) and $ {\mathrm d}m_t= {\mathrm d}m_t^X=0$, we can differentiate the ansatz (\ref{eq30}) to obtain 
\begin{equation}\label{eq32}
  \begin{array}{ll}
    {\mathrm d}Y_t^{i,i}&=u\cdot\slimj [X_t^j\Dot{\phi}_t^{j-i}{\mathrm d}t+\phi_t^{j-i} {\mathrm d} X_t^j]-(1-u)\cdot \left(\Dot{\psi}_t (m_t-X_t^i) {\mathrm d}t+\psi_t {\mathrm d}(m_t-X_t^i)\right)\\
    &\stackrel{\text{def}}{=}u\cdot \Romannum{1}-(1-u)\cdot\Romannum{2} 
    \end{array}
\end{equation}
where  
\begin{equation}\label{eq33}
  \begin{array}{ll}
  \Romannum{1}&=\slimj[X_t^j\Dot{\phi}_t^{j-i} {\mathrm d}t+\phi_t^{j-i} {\mathrm d}X_t^j]\\
    &= \slimo\Big( \Dot{\phi}_t^{k}-u\sum\limits_{j=0}^k \phi_t^j\phi_t^{k-j}\Big) X_t^{i+k}{\mathrm d}t+(1-u)\psi_t\slimo\phi_t^{k}(m_t-X_t^{i+k}){\mathrm d}t +\s \sum\limits_{k=i}^\infty \phi_t^{k-i}{\mathrm d}W_t^k \\
     \end{array}
\end{equation}
and 
    \begin{align}
         \Romannum{2}&=\Dot{\psi}_t(m_t-X_t^i) {\mathrm d}t+\psi_t {\mathrm d}(m_t-X_t^i)  
         =u\psi_t\slimo\phi_t^kX_t^{i+k} {\mathrm d}t+(\Dot{\psi}_t-(1-u)\psi_t^2)(m_t-X_t^i) {\mathrm d}t-\psi_t\s {\mathrm d}W_t^i.
    \end{align}

Now we compare the two It\^o's decompositions \eqref{eq32} 
and (\ref{eq27}).
First, the martingale terms give the processes 
$Z_t^{i,j,k}$:
\begin{equation*}
    Z_t^{i,i,k}=0\text{ for } k<i,\ Z_t^{i,i,i}=u\s\phi_t^{0}+(1-u)\s\psi_t \quad \text{ and } \quad \ Z_t^{i,i,k}=u\s\phi_t^{k-i} \quad \text{ for }  \quad k>i.
\end{equation*}

And from the drift terms we obtain the system of ordinary differential equations: 
\begin{align}
     \text{for} \ k=0:&\ u\Dot{\phi}_t^{0}-u^2(\phi_t^0)^2-2u(1-u)\psi_t\phi_t^0+(1-u)\Dot{\psi}_t-(1-u)^2\psi_t^2=-\e ,&\quad \psi_T=c,\,\phi^0_T=c\label{eq36}\\
     \text{for}\ k=1:&\ u\Dot{\phi}_t^1-2u^2\phi_t^0\phi_t^1-2u(1-u)\psi_t\phi_t^1=u\e,\quad &\phi^1_T=-c \label{eq37}\\
     \text{for}\ k\geq 2:&\ u\Dot{\phi}_t^k-u^2\sum\limits_{j=0}^k\phi_t^j\phi_t^{k-j}-2u(1-u)\psi_t\phi_t^k=0, &\phi^k_T=0\label{eq38}\\
     \text{and  }\ \ &\ u(1-u)\psi_t \slimo \phi_t^k-(1-u)\Dot{\psi}_t+(1-u)^2\psi_t^2=(1-u)\e, &\quad \psi_T=c.\label{eq39}
\end{align}
In Appendix \ref{infinite-sum0} we show the following result which simplifies (\ref{eq39}) considerably.
\begin{prop} \label{infinite-sum=0} The solution $\phi_{\cdot}^{j}$ to the system of Riccati equations satisfies 
$\sum_{j=0}^\infty \phi_t^{j}=0 $ for $ t \ge 0 $. 
\end{prop}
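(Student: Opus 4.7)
The plan is to derive a closed ODE for the partial sum $S_{t} := \sum_{j=0}^{\infty} \phi_{t}^{j}$, show that $S_{T}=0$, and conclude by the uniqueness of solutions that $S_{t}\equiv 0$. This is the same strategy used in Lemma \ref{inf_sumo} for the purely directed-chain infinite game, now adapted to the mixed system because the equations (\ref{eq36})--(\ref{eq38}) contain an extra linear term $2u(1-u)\psi_{t}\phi_{t}^{k}$ and because the $k=0$ equation involves $\psi_{t}$ and $\dot{\psi}_{t}$.

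First, I would sum (\ref{eq36})--(\ref{eq38}) term by term, assuming for the moment that termwise differentiation of $\sum_{j\ge 0}\phi_{t}^{j}$ and the Cauchy-product identity
\[
\sum_{k=0}^{\infty}\sum_{j=0}^{k}\phi_{t}^{j}\phi_{t}^{k-j}\;=\;S_{t}^{2}
\]
are justified. Collecting the quadratic, linear and inhomogeneous terms and using that the constants $-\epsilon+u\epsilon$ coming from equations (\ref{eq36}) and (\ref{eq37}) combine into $-(1-u)\epsilon$, the summation yields
\[
u\dot{S}_{t}\;=\;u^{2}S_{t}^{2}+2u(1-u)\psi_{t}S_{t}-(1-u)\dot{\psi}_{t}+(1-u)^{2}\psi_{t}^{2}-(1-u)\epsilon .
\]
Now I use the auxiliary equation (\ref{eq39}), which after dividing by $1-u$ reads
\[
u\psi_{t}S_{t}-\dot{\psi}_{t}+(1-u)\psi_{t}^{2}\;=\;\epsilon ,
\]
so that $-(1-u)\dot{\psi}_{t}+(1-u)^{2}\psi_{t}^{2}=(1-u)\epsilon-u(1-u)\psi_{t}S_{t}$. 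Substituting and dividing by $u\in(0,1)$ gives the clean identity
\begin{equation}\label{eq:Seq-proposal}
\dot{S}_{t}\;=\;S_{t}\bigl(uS_{t}+(1-u)\psi_{t}\bigr),\qquad S_{T}\;=\;\phi_{T}^{0}+\phi_{T}^{1}\;=\;c-c\;=\;0.
\end{equation}

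The right-hand side of \eqref{eq:Seq-proposal} is locally Lipschitz in $S$ for each fixed bounded $\psi_{t}$, so the Cauchy--Lipschitz theorem gives a unique solution of the terminal value problem. Since $S_{t}\equiv 0$ trivially solves it, we conclude $\sum_{j=0}^{\infty}\phi_{t}^{j}=0$ for all $t\in[0,T]$.

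The main obstacle is the analytic justification of the termwise summation and the Cauchy product: one needs to control the decay of $\phi_{t}^{j}$ in $j$ uniformly in $t$ so that $\sum_{j\ge 0}\phi_{t}^{j}$ converges absolutely and $\sum_{j\ge 0}\dot{\phi}_{t}^{j}$ converges uniformly on $[0,T]$. I would handle this along the lines of Lemma \ref{inf_sumo}, i.e., introduce the generating function $S_{t}(z)=\sum_{k\ge 0}z^{k}\phi_{t}^{k}$ for $|z|<1$, translate (\ref{eq36})--(\ref{eq38}) into a scalar Riccati equation for $S_{t}(z)$ (with $\psi_{t}$ as an external coefficient), solve it explicitly, and verify that $S_{t}(z)$ extends continuously to $z=1$ with value $0$. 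That continuous-in-$z$ extension provides both the analytic justification and an independent confirmation of the result.
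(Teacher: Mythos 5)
Your argument is correct and follows essentially the same route as the paper: both reduce the system to the scalar equation $\dot S_t = S_t\bigl(uS_t+(1-u)\psi_t\bigr)$ with $S_T=0$ (the paper gets there by first adding (\ref{eq36}) and (\ref{eq39}) and then evaluating the generating function at $z=1$, while you sum all equations and then substitute (\ref{eq39}) --- algebraically the same manipulation) and conclude $S\equiv 0$ by uniqueness. Your closing remark on justifying the termwise operations via the generating function for $z<1$ and a limit $z_n\to 1$ is precisely the caveat the paper itself records in its appendix.
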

Using Proposition \ref{infinite-sum=0} and $0<u< 1$, we can simplify the equations (\ref{eq36}) to (\ref{eq39}):
\begin{equation}
    \begin{array}{lll}
    &\Dot{\psi}_t=(1-u)\psi_t^2-\e,& \psi_T=c\textit{   (Riccati)},\\
    \text{for}\ k=0:&\Dot{\phi}_t^0=u\phi_t^0\cdot \phi_t^0+2(1-u)\psi_t\phi_t^0-\e, &\phi_T^0=c\textit{   (Riccati)}, \\
     \text{for}\ k=1:& \Dot{\phi}_t^1=2u\phi_t^0\cdot \phi_t^1+2(1-u)\psi_t\phi_t^1+\e, &\phi_T^1=-c,\\
     \text{for} \ k\geq 2:&\Dot{\phi}_t^k=u(\phi_t^0\cdot \phi_t^k+\phi_t^1\cdot\phi_t^{k-1}+\cdots+\phi_t^{k-1}\cdot\phi_t^1+\phi_t^k\cdot\phi_t^0)+2(1-u)\psi_t\phi_t^k, & \phi_T^k=0.
    \end{array}
\end{equation}
Looking at the stationary solution (in the limit ($T\to \infty$), and 
without loss of generality assuming $\e=1$ again, the recurrence relation can be solved by the  method of moment generating function to obtain:
\begin{equation} \label{eq: stationary psi phi k}
    \left\{
    \begin{array}{ll}
         &\psi=\sqrt{\frac{1}{1-u}}, \quad \phi^0=\frac{1-\sqrt{1-u}}{u},  \\
         & \phi^1=-\frac{1}{2}, \quad 
         \phi^k=-\dfrac{(2k-3)!}{(k-2)!k!2^{2k-2}} u^{k-1},\quad for\, k\geq 2.
    \end{array}
    \right.
\end{equation}


\subsection{Catalan Markov Chain for the Mixed Model}
As in  Section \ref{section 4}, we consider a continuous-time Markov chain $M^{(u)}(\cdot)$ in the state space $\, \mathbb N \,$ with generator matrix 
\begin{equation}
\mathbf{Q}^{(u)}\,=\left( \begin{array}{ccccc} 
-1 & q_1 &  q_2 & q_3 & \cdots \\
0 & -1 & q_1 &  q_2 & \ddots \\
0 &0 & -1 & q_1 &\ddots\\
& \ddots & \ddots &\ddots  & \ddots\\
\end{array} 
\right) 
\end{equation} 
where $q_{k} = - u \phi^{k} > 0 $ with $\phi^{k}$ in \eqref{eq: stationary psi phi k} for $k \ge 1$. Note that $ \sum_{k=1}^{\infty} q_{k} = 1 - \sqrt{ 1 - u } $. 

For $0<u<1$, $\mathbf{Q}^{(u)}$ is the generator of the Markov chain 
from $i$ to $i+k$ with rate $q_{k}$ and killed with probability $\sqrt{ 1 - u } $. 
The infinite particle system (\ref{eq31}) can be represented as the infinite-dimensional stochastic evolution equation:
\begin{equation}  \label{eq45}
{\mathrm d}\mathbf{X^{(u)}_t}=\mathbf{Q^{(u)}\,X^{(u)}_t}{\mathrm d}t+ {\mathrm d}\mathbf{W}_t,
\end{equation}
where $\mathbf{X^{(u)}_.}=(X_{.}^{i},i \in \mathbb{N} )$ with $\mathbf{X^{(u)}_0}=\mathbf{x^{(u)}_0}$ and $\mathbf{W_.}=(W_{.}^{i}, i\in \mathbb{N})$. 
The solution is formally written by 
\begin{equation}
\mathbf{X^{(u)}_t}=e^{t\mathbf{Q^{(u)}}} \mathbf{x^{(u)}_0}+ \int_0^t e^{(t-s)\mathbf{Q^{(u)}}} {\mathrm d}\mathbf{W}_s ;\quad t\geq 0.
\end{equation}

Note that the transition probabilities of the continuous-time Markov chain $M^{(u)}(\cdot)$ is : $p_{i,j}(t)=\, \mathbb P (M^{(u)}(t)=j|M^{(u)}(0)=i)=(e^{t\mathbf{Q^{(u)}}})_{i,j}$, $ i,j \ge 1\,$, $t\geq 0$. Without loss of generality, assume $\mathbf{x^{(u)}_0}=\mathbf{0}$. Then, 
\begin{equation}
\begin{array}{ll}
X_{t}^{i} 
&= \displaystyle \mathop{\mathlarger{\int^t_0}}\,  \sum\limits_{j=i}^\infty p_{i,j}(t-s) {\mathrm d} W_{s}^{j} 
 = \displaystyle \mathop{\mathlarger{\int^t_0}}\, \sum\limits_{j=i}^\infty\, \mathbb P (M(t-s)=j|M(0)=i) {\mathrm d} W_{s}^{j}\\
 &= \displaystyle \mathbbm{E}^M \Big[ \mathop{\mathlarger{\int^t_0}}\,\sum\limits_{j=i}^\infty\, \mathbf{1}_ {(M(t-s)=j)} {\mathrm d} W_{s}^{j} | M(0)=i \Big];\quad t\geq 0,
\end{array}
\end{equation}
where the expectation is taken with respect to the probability induced by the Markov chain $M^{(u)}(\cdot)$, independent of the Brownian motions $(W_{\cdot}^{k},\, k\in \mathbb{N})$. Therefore, we have a Feynman--Kac representation formula for the generator $\mathbf{Q^{(u)}}$.
Since $\sum\limits_{i=1}^{k-1} q_i q_{k-i}=u^2\sum\limits_{i=1}^{k-1}\phi_t^{(i)}\phi_t^{(k-i)}=-2u\phi_t^{(k)}=2q_k$ we have $\, (\mathbf{Q^{(u)}})^{2} \, =\,  I - uB\,$ with $\, B\,$ having $\,1\,$'s on the upper second diagonal and $\,0\,$'s elsewhere, i.e., 
\begin{equation}
(\mathbf{Q^{(u)}})^{2} \, =\, I - u B = \left( \begin{array}{ccccc} 
1 & -u &  0 & \cdots & \\
0 & 1 & -u &  \ddots &  \\
& \ddots & \ddots &\ddots  & \\
\end{array} 
\right) \, . 
\end{equation} 

\smallskip 

With a smooth function $F(x) \, :=\, \exp ( - \sqrt{ - x } )\, ,  \, \, x \in \mathbb C $, the matrix exponential of $\, \mathbf{Q^{(u)}} t\,$ can be written formally
\[
\exp (\mathbf{Q^{(u)}} t )=\, F((-I+uB)t^2)=\,\sum\limits_{j=0}^\infty \frac{F^{(j)}(-t^2)}{j!}\, (uBt^2)^j=\,\sum\limits_{j=0}^\infty \frac{u^j t^{2j}F^{(j)}(-t^2)}{j!}\, B^j 
\]
for $t \ge 0$. Then the $\,(i,j)\,$-element of $\, \exp ( \mathbf{Q} t ) \,$ is formally given by 
\[
(\exp (\mathbf{Q^{(u)}} t ))_{i,j} \, =\,  \frac{\,u^{j-i} t^{2(j-i)} \cdot F^{(j-i)}(-t^{2}) \,}{\, (j-i)!\,} \,  , \quad i \le j \, , \, \, \text{ where } \, \, F^{(j)} (x) \, :=\, \frac{\,{\mathrm d}^{j}  F \,}{\,{\mathrm d} x^{j}\,}(x)  \, ; \quad x > 0 \,, \, \, j \in \mathbb N \,  ,   
\]
and $\, (\exp ( \mathbf{Q^{(u)}} t))_{i,j} \, =\,  0 \,$, $\, i > j \,$ for $\, t \ge 0 \,$. 

\smallskip 

As in Section \ref{section 4}, we have the same solution for $\, F^{(k)}(x) \,$: $\,F^{(k)}(x) \, =\,  \rho_{k}(x) e^{ - \sqrt{-x}}\,$, where 
$\rho_{k}(\cdot)$ 
was defined in (\ref{eq: rhokxProp1}), and we can summarize our finding: 

\smallskip 

\begin{prop}\label{catalanMC-sol-general}
 The Gaussian process $\, X_t^{i} \,$, $\, i \in \mathbb N\,$, $\, t \ge 0 \,$, corresponding to the (Catalan) general Markov chain, is 
\begin{equation*}
\begin{split}
{X}^{i}_{t}  \, & =\, \sum\limits_{j=0}^{\infty}\int^{t}_{0}   (\exp ( \mathbf{Q^{(u)}} (t-s)))_{i,j} {\mathrm d} {W}_{s}^{j} \, =\, \sum\limits_{j=i}^{\infty} \int^{t}_{0}  \frac{\, u^ {j-i} (t-s)^{2(j-i)}  \,}{\, (j-i)!\,} \cdot F^{(j-i)}(-(t-s)^{2})  {\mathrm d} W_{s}^{j}   
\end{split}
\end{equation*}
\begin{equation}
\begin{split}
\hspace{-4cm} &\, =\,  \sum\limits_{j=i}^{\infty} \int^{t}_{0}  \frac{\, u^{j-i}(t-s)^{2(j-i)}  \,}{\, (j-i)!\,} \cdot \rho_{j-i}(- (t-s)^{2}) \, e^{-(t-s)} \cdot  {\mathrm d} W_{s}^{j},
\end{split}
\end{equation}
where $\, {W}_{\cdot}^{j}  \,$, $\,  j \in \mathbb N \,$ are independent standard Brownian motions. 
\end{prop}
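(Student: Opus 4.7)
The plan is to parallel the argument that established Proposition \ref{sol_markovc}, now applied to $\mathbf{Q}^{(u)}$, exploiting the algebraic identity $(\mathbf{Q}^{(u)})^{2} = I - uB$ verified in the excerpt just above the proposition.

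First, I would invoke the variation-of-constants representation for the linear stochastic evolution equation \eqref{eq45} with $\mathbf{X}_{0}^{(u)} = \mathbf{0}$, giving
\begin{equation*}
X_t^i = \sum_{j=0}^{\infty} \int_0^t \bigl( \exp((t-s)\mathbf{Q}^{(u)}) \bigr)_{i,j} \, {\mathrm d} W_s^j,
\end{equation*}
where all contributions with $j < i$ vanish since $\mathbf{Q}^{(u)}$ is upper-triangular. The whole problem then reduces to an explicit computation of the entries $(\exp(\mathbf{Q}^{(u)} \tau))_{i, i+k}$ for $\tau \ge 0$ and $k \ge 0$.

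Introducing the entire function $F(x) := \exp(-\sqrt{-x})$ and using $(\mathbf{Q}^{(u)})^{2} = I - uB$, one may write $\exp(\mathbf{Q}^{(u)} \tau) = F(-(\mathbf{Q}^{(u)})^2 \tau^2) = F((-I + uB)\tau^2)$. Since $-I\tau^2$ acts as the scalar $-\tau^2$ on every basis vector and commutes with $uB\tau^2$, a Taylor expansion around $-\tau^2$ yields the formal identity
\begin{equation*}
\exp(\mathbf{Q}^{(u)} \tau) = \sum_{j=0}^{\infty} \frac{F^{(j)}(-\tau^2)}{j!} (u\tau^2)^j \, B^j.
\end{equation*}
The matrix $B$ has a $1$ in position $(i, i+1)$ and zeros elsewhere, so $B^j$ has $1$ in position $(i, i+j)$ and zeros elsewhere. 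Reading off the $(i, i+k)$-entry gives
\begin{equation*}
\bigl( \exp(\mathbf{Q}^{(u)} \tau) \bigr)_{i, i+k} = \frac{u^k \tau^{2k}}{k!} \, F^{(k)}(-\tau^2).
\end{equation*}
Substituting $\tau = t - s$ together with the identity $F^{(k)}(x) = \rho_k(x) e^{-\sqrt{-x}}$, with $\rho_k$ as in \eqref{eq: rhokxProp1}, already established in the proof of Proposition \ref{sol_markovc} (see Appendix \ref{CatalanMC-1}), produces the two forms claimed in the statement.

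The main technical obstacle is justifying the formal functional calculus on the infinite matrix $\mathbf{Q}^{(u)}$ and the exchange of infinite summation with the stochastic integral. Because $\mathbf{Q}^{(u)}$ is upper-triangular, the $(i, i+k)$-entry of $\exp(\mathbf{Q}^{(u)} \tau)$ depends only on the finite principal submatrix indexed by $\{i, i+1, \ldots, i+k\}$, so each entry is a well-defined finite quantity obtained from the ordinary matrix exponential and no spectral-theoretic machinery is required. The stochastic Fubini interchange then reduces to checking the integrability on $[0,t]$ of the map $s \mapsto \sum_{j \ge i} |(\exp((t-s)\mathbf{Q}^{(u)}))_{i,j}|^2$, which follows from the rapid decay of $\rho_k(-\tau^2) e^{-\tau}$ via its modified Bessel function $K_{k-1/2}$ representation recalled in Remark \ref{rho_k and var}.
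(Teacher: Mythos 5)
Your argument is correct and follows essentially the same route as the paper: the paper establishes this proposition by the computation displayed immediately before its statement, namely the identity $(\mathbf{Q}^{(u)})^{2}=I-uB$, the formal functional calculus $\exp(\mathbf{Q}^{(u)}t)=F((-I+uB)t^{2})$ expanded in powers of $uBt^{2}$, and the substitution $F^{(k)}(x)=\rho_{k}(x)e^{-\sqrt{-x}}$, exactly as you do. Your additional remarks on why the upper-triangular structure makes each matrix-exponential entry a finite computation and on the stochastic Fubini interchange are a welcome touch of rigor but do not change the method.
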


\subsection{Asymptotic Behavior}

Table \ref{table:1} exhibits the asymptotic behaviors of their variances and covariances as $t\to \infty$. The calculation is given in Appendix \ref{table1}. We find that only when $u=0$ (i.e. pure mean field game), the asymptotic cross-covariance is zero, which means the states are asymptotically independent. Otherwise, they are dependent and their covariance is finite. Note that in the purely nearest neighbor interaction studied in Detering, Fouque, and Ichiba \cite{Nils-JP-Ichiba2018DirectedChain}, i.e., in the case $u=0$, the variance is not stabilized as in our ``Catalan" interaction equilibrium dynamics.

\begin{table}[h!]
\begin{center}
 \begin{tabular}{|c| c| c| c|} 
 \hline
 $u$ & Interaction Type & Asymptotic Variance & Asymptotic Independence between two players \\ [0.5ex] 
 \hline\hline
 $u=0$ & Purely mean-field & Stabilized & Independent \\ 
 \hline
  $u\in (0,1)$& Mixed interaction & Stabilized & Dependent \\
 \hline
 $u=1$ & Purely directed chain & Stabilized & Dependent \\[1ex] 
 \hline
\end{tabular}
\end{center}
\caption{Asymptotic behaviors as $t\to \infty$}
\label{table:1}
\end{table}

\section{Periodic Directed Chain Game}\label{section 7}


We consider a stochastic game with finite players on a periodic ring structure. We assume the dynamics of the states of the individual players are given by $N$ stochastic differential equations of the form:
 \begin{equation}\label{eq1}
    {\mathrm d}X_t^i=\a_t^i {\mathrm d}t+\s {\mathrm d}W_t^i,\quad  i=1,\cdots,N, \quad  0\leq t\leq T , 
\end{equation}
where $(W_t^i)_{0\leq t\leq T},\, i = 1, \cdots, N$ are one-dimensional independent standard Brownian motions. The drift coefficient function, the diffusion coefficient and the initial conditions are assumed to be the same as those in Section \ref{section 2}. In this model, player $i$ chooses its own strategy $\alpha^i$ in order to minimize its objective function of the form:
\begin{equation}
    J^i(\a^1,\cdots,\a^N)=\mathbbm{E} \left\{\int_0^T \left(\frac{1}{2}(\a^i_t)^2+\frac{\e}{2}(X_t^{i+1}-X_t^{i})^2\right) {\mathrm d}t+\frac{c}{2}(X_T^{i+1}-X_T^{i})^2\right\},
\end{equation}
for  constants $\e>0$, and $c\geq 0$, and we define $X_\cdot^{N+1}=X_\cdot^{1}$. 

\subsection{Construction of an Open-Loop Nash Equilibrium}
We search for Nash equilibria of the system among strategies $\{\a_t^i,i=1,\cdots,N\}$. We construct an open-loop Nash equilibrium by the Pontryagin stochastic maximum principle.
The Hamiltonian for player $i$ is given by:
\begin{equation}\label{eq3}
    H^i(x^1,\cdots,x^N,y^{i,1},\cdots,y^{i,N},\a^1,\cdots,\a^N)=\sum\limits_ {k=1}^N \a^k y^{i,k}+\frac{1}{2}(\a^i)^2+\frac{\e}{2}(x^{i+1}-x^i)^2.
\end{equation}
The adjoint processes $Y_t^i=(Y_t^{i,j};j=1,\cdots,N)$ and $Z_t^i=(Z_t^{i,j,k};j,k=1,\cdots,N)$ for $i=1,\cdots,N$ are defined as the solutions of the system of the backward stochastic differential equations (BSDEs):
\begin{equation}\label{eq5}
    \left\{
  \begin{array}{ll}
    dY_t^{i,j}&=-\e (X_t^{i+1}-X_t^i)(\d_{i+1,j}-\d_{i,j}){\mathrm d}t+\sum\limits_ {k=1}^N Z_t^{i,j,k}{\mathrm d}W_t^k,\\
    Y_T^{i,j}&=\partial_{x^j}g_i(X_T)=c(X_T^{i+1}-X_T^i)(\d_{i+1,j}-\d_{i,j}).
  \end{array}
\right.
\end{equation}

Based on the sufficiency part of the Pontryagin stochastic maximum principle, we can get an open-loop Nash equilibrium by minimizing the Hamiltonian $H^i$ with respect to $\a^i$:
\begin{equation} \label{eq6}
    \partial_{\a^i}H^i=y^{i,i}+\a^i=0 \quad \textit{leading to the choice:}\quad  \Hat{\a}^i=-y^{i,i}.
\end{equation}
With this choice for the controls $\a^i$'s, the forward equation (\ref{eq1}) becomes coupled with the backward equation (\ref{eq5}). We make the ansatz:
\begin{equation}\label{eq7}
    Y_t^{i,i}=\sum\limits_{j=0}^{N-1} \phi_t^{N,j}X_t^{i+j},
\end{equation}
for some deterministic scalar functions $\phi_t$ satisfying the terminal conditions: $\phi_T^{N,0}=c,\phi_T^{N,1}=-c,\phi_T^{N,k}=0$ for $k\geq 2$ and $X_t^{i+j}\stackrel{def}{=} X_t^{(i+j) \mod N}$.
Using the ansatz, the optimal strategy (\ref{eq6}) and the forward equation (\ref{eq1}) become:
\begin{equation}\label{eq8}
    \Hat{\a}^i=-Y_t^{i,i}=-\sum\limits_{j=0}^{N-1}  \phi_t^{N,j}X_t^{i+j}, 
   \quad  {\mathrm d} X_t^i=-\sum\limits_{j=0}^{N-1} \phi_t^{N,j}X_t^{i+j} {\mathrm d}t+\s {\mathrm d}W_t^i.
\end{equation}
\\
Using the equations (\ref{eq8}), we can differentiate the ansatz (\ref{eq7}):
\begin{equation}\label{eq9}
 \begin{split} 
    {\mathrm d}Y_t^{i,i}&=\sum_{j=0}^{N-1} [X_t^{i+j}\Dot{\phi}_t^{N,j} {\mathrm d}t+\phi_t^{N,j} {\mathrm d}X_t^{i+j}]\\
    &=\sum\_{j=0}^{N-1} X_t^{i+j}\Dot{\phi}_t^{N,j} {\mathrm d}t-\sum\limits_{j=0}^{N-1} \phi_t^{N,j} \sum_{k=0}^{N-1} \phi_t^{N,k}X_t^{i+j+k} {\mathrm d}t+\sum_{j=0}^{N-1}  \s\phi_t^{N,j} {\mathrm d}W_t^{i+j}\\
  \end{split}
\end{equation}
\\
Now we compare the two It\^o's decompositions (\ref{eq9}) and (\ref{eq5}) of $Y_t^{i,i}$. The martingale terms give the processes 
$Z_t^{i,j,k}$:
\begin{equation*}
    Z_t^{i,i,k}=\s\phi_t^{N,N+k-i} \quad \text{ for } \quad 1\leq k<i \quad \text{ and } \quad \ Z_t^{i,i,k}=\s\phi_t^{N,k-i} \quad \text{ for } \quad i\leq k\leq N.
\end{equation*}

And from the drift terms, we get the system of ordinary differential equations 
\begin{equation} \label{eq10}
\begin{array}{lll}
     \text{for}\ k=0:
     &\Dot{\phi}_t^{N,0}=\phi_t^{N,0}\cdot \phi_t^{N,0}+ \displaystyle \sum\limits_{i=1}^{N-1} \phi_t^{N,i} \phi_t^{N,N-i} -\e , &\phi_T^{N,0}=c,\\
     \text{for}\ k=1:
     &  \displaystyle  \Dot{\phi}_t^{N,1}=\phi_t^{N,0}\cdot \phi_t^{N,1}+ \phi_t^{N,1}\cdot\phi_t^{N,0} +\sum\limits_{i=2}^{N-1} \phi_t^{N,i} \phi_t^{N,N+1-i} +\e, &\phi_T^{N,1}=-c,\\
     \text{for}\ N-1>k\geq 2:
    & \displaystyle  \Dot{\phi}_t^{N,k}=\sum\limits_{j=0}^{k} \phi_t^{N,j} \phi_t^{N,k-j} + \sum\limits_{i=k+1}^{N-1} \phi_t^{N,i} \phi_t^{N,N+k-i} ,&\phi_T^{N,k}=0,\\
    \\
     \text{for}\ k=N-1: 
     & \displaystyle  \Dot{\phi}_t^{N,N-1}=\sum\limits_{j=0}^{N-1} \phi_t^{N,j} \phi_t^{N,N-1-j} ,&\phi_T^{N,N-1}=0.
\end{array}
\end{equation}
It can be written as a matrix Ricatti equation: 
\begin{equation} \label{matriRicc1}
\dot{\Phi}^N(t) = \Phi^N(t) \Phi^N(t) - {\mathbf E} \, , \quad \Phi^N(T) := {\mathbf C} \, , 
\end{equation}
where $\Phi^N(\cdot)$ is the $N\times N$ matrix-valued function given by 
$$
\Phi^N (t) := \left( \begin{array}{ccccc}
\phi^{N,0}_t & \phi^{N,N-1}_t & \cdots & & \phi^{N,1}_t \\
\phi^{N,1}_t & \phi^{N,0}_t & \ddots & & \phi^{N,2}_t \\
\vdots & \ddots & \ddots & \ddots & \vdots \\
\vdots & \ddots & \ddots & \ddots & \phi^{N,N-1}_t\\
\phi^{N,N-1}_t & \cdots & & \phi^{N,1}_t & \phi^{N,0}_t \\
\end{array}
\right) \, ,$$ and ${\mathbf E} $  and $\mathbf C$ are constant matrices defined by 
$$  
\mathbf E := \left ( \begin{array}{ccccc} \epsilon & 0 & \cdots & 0 & -\epsilon \\
-\epsilon & \epsilon & \ddots & \ddots & 0 \\
0 & -\epsilon & \ddots & \ddots  &\vdots \\
\vdots & \ddots & \ddots & \ddots & 0 \\
0& \cdots & 0 & -\epsilon &  \epsilon \\ 
\end{array}\right) \, , \quad
\mathbf C := \left ( \begin{array}{ccccc} c & 0 & \cdots & 0 & -c \\
-c & c & \ddots & \ddots & 0 \\
0 & -c & \ddots & \ddots  &\vdots \\
\vdots & \ddots & \ddots & \ddots & 0 \\
0& \cdots & 0 & -c &  c \\ 
\end{array}\right) .  
$$

\begin{prop} \label{finite_sumo}
The solution $\phi_{\cdot}^{N,k} $, $k = 1, \ldots , N$ to the system of Riccati equations \eqref{eq10} satisfies the relation $\sum_{k=0}^{N-1} \phi_t^{N,k}=0$ for $0 \le t \le T$. 
\end{prop}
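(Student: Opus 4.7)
The plan is to exploit the circulant structure of the matrix Riccati equation \eqref{matriRicc1} rather than manipulating the scalar equations \eqref{eq10} directly. Let $S_t := \sum_{k=0}^{N-1} \phi_t^{N,k}$ denote the quantity of interest, and let $\mathbf{1} := (1,1,\ldots,1)^\top \in \mathbb{R}^N$.

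First I would observe the following three facts. (i) The matrix $\Phi^N(t)$ is circulant (each row is a cyclic shift of the first), so every row sums to the same number $S_t$; equivalently, $\Phi^N(t)\mathbf{1} = S_t\,\mathbf{1}$. (ii) The constant matrices $\mathbf{E}$ and $\mathbf{C}$ are also circulant, and each of their rows has exactly one entry $+\epsilon$ (resp.\ $+c$) and one entry $-\epsilon$ (resp.\ $-c$), with all other entries zero; hence $\mathbf{E}\mathbf{1} = \mathbf{0}$ and $\mathbf{C}\mathbf{1} = \mathbf{0}$. (iii) The terminal data give $S_T = \phi_T^{N,0} + \phi_T^{N,1} + \sum_{k\geq 2} \phi_T^{N,k} = c + (-c) + 0 = 0$, consistent with (ii) applied to the terminal condition $\Phi^N(T)=\mathbf{C}$.

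Next I would apply \eqref{matriRicc1} to the vector $\mathbf{1}$. Using (i)--(ii),
\begin{equation*}
\dot{S}_t\,\mathbf{1} \;=\; \dot{\Phi}^N(t)\mathbf{1} \;=\; \Phi^N(t)\bigl(\Phi^N(t)\mathbf{1}\bigr) - \mathbf{E}\mathbf{1} \;=\; \Phi^N(t)\bigl(S_t\,\mathbf{1}\bigr) \;=\; S_t^{\,2}\,\mathbf{1}.
\end{equation*}
Reading off any component, this yields the scalar Riccati problem
\begin{equation*}
\dot{S}_t \;=\; S_t^{\,2}, \qquad S_T \;=\; 0.
\end{equation*}
Since the right-hand side is locally Lipschitz in $S$, this backward initial value problem admits a unique solution on $[0,T]$, and $S_t \equiv 0$ is clearly such a solution; therefore $\sum_{k=0}^{N-1}\phi_t^{N,k} = 0$ for every $t \in [0,T]$.

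The only real content of the argument is the circulant observation, and the only step that might cause hesitation is verifying that $\Phi^N(t)$ remains circulant for all $t$; but this is automatic from the ansatz \eqref{eq7}, whose translation invariance in the index $i$ is what produced a system of $N$ (rather than $N^2$) Riccati equations in \eqref{eq10} in the first place. Equivalently, one may check directly from \eqref{eq10} that summing all $N$ equations makes each cross-product $\phi_t^{N,j}\phi_t^{N,k-j}$ (indices taken mod $N$) appear exactly once as a contribution to $\dot{S}_t$, giving $\dot{S}_t = \bigl(\sum_j \phi_t^{N,j}\bigr)\bigl(\sum_k \phi_t^{N,k}\bigr) = S_t^{\,2}$, with the $\pm\epsilon$ constants cancelling, thereby confirming the matrix-level derivation.
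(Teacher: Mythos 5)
Your proof is correct and is essentially the paper's own argument in different packaging: the paper defines the generating function $S_t^N(z)=\sum_{k=0}^{N-1}z^k\phi_t^{N,k}$ and sets $z=1$, which is exactly your row-sum computation $\Phi^N(t)\mathbf{1}=S_t\mathbf{1}$, and both routes land on the same scalar problem $\dot S_t=S_t^2$, $S_T=0$, with the $\pm\epsilon$ (and $\pm c$) terms cancelling. The circulant/matrix formulation is a clean way to see it, but it is not a genuinely different proof.
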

\begin{proof}
Given in Appendix \ref{appendixsumo}.
\end{proof}

With finite $N$, these equations are not easy to solve explicitely. If we take $N=\infty$, we expect that the system converges to the Riccati system of the infinite-player game studied in Section \ref{section 3}.

\begin{conjecture}\label{conjecturefinite_phi}
The limit of each element in $\Phi^{N}(\cdot)$ in \eqref{matriRicc1} exists as $N\to \infty$, i.e., $\Phi^N (t)\to \Phi^\infty(t)$ and the limit $\Phi^\infty(t)$ is an infinite dimensional, lower triangular, matrix-valued function of $t \ge 0$ given by $\Phi^{\infty}(t) = (\Phi^{\infty, i,j}(t))_{i,j \in \mathbb N}$ with $\Phi^{\infty,i,j} (\cdot) \equiv 0 $ if $i < j$; $\Phi^{\infty,i,j} (\cdot) \equiv \phi^{i-j}$ if $i \ge j$, where the functions $\phi^{k}$'s are given by the system of ordinary differential equations  \eqref{eq18}.
\end{conjecture}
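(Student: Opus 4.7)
The plan is to exploit the cyclic ring symmetry to diagonalize the matrix Riccati equation (\ref{matriRicc1}) via the discrete Fourier transform (DFT), reducing it to $N$ scalar Riccati problems, and then to identify the limit of the inverse DFT of the scalar solutions with the Fourier coefficients of the generating function $S_t(z)$ from Lemma \ref{inf_sumo}. Specifically, observe that $\mathbf{E}$, $\mathbf{C}$, and $\Phi^N(t)$ are all circulant matrices, and the $N$-dimensional circulants form a commutative algebra simultaneously diagonalized in the DFT basis $v_k = N^{-1/2}(1,\omega^k,\ldots,\omega^{(N-1)k})^\top$ with $\omega = e^{2\pi i/N}$, $k=0,\ldots,N-1$. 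Conjugating (\ref{matriRicc1}) by the DFT matrix decouples it into the scalar complex Riccati equations
\[ \dot{\mu}_k^N(t)=(\mu_k^N(t))^2-\epsilon(1-\omega^k), \qquad \mu_k^N(T)=c(1-\omega^k), \]
and inversion yields $\phi_t^{N,j} = N^{-1}\sum_{k=0}^{N-1}\mu_k^N(t)\,\omega^{-jk}$.

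Let $m(t;z)$ denote the unique solution of $\dot{m}=m^2-\epsilon(1-z)$ with $m(T)=c(1-z)$, defined for $|z|\le 1$. Multiplying the infinite Riccati system (\ref{eq18}) by $z^j$ and summing over $j\ge 0$ shows that $S_t(z)=\sum_{j\ge 0}z^j\phi_t^j$ satisfies exactly this scalar Riccati problem, so $S_t(z)\equiv m(t;z)$ by ODE uniqueness. Hence $\mu_k^N(t)=S_t(\omega^k)$ and
\[ \phi_t^{N,j}=\frac{1}{N}\sum_{k=0}^{N-1}S_t(\omega^k)\,\omega^{-jk}, \]
which is an $N$-point trapezoidal approximation of the $j$-th Fourier coefficient of $\theta\mapsto S_t(e^{i\theta})$. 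Using the explicit formula of Lemma \ref{inf_sumo} one verifies that $z\mapsto S_t(z)$ is continuous on $|z|\le 1$ (the apparent $\sqrt{1-z}$ branch at $z=1$ cancelling, consistently with $S_t(1)=0$). Standard Riemann-sum convergence then gives, for each fixed $j$ and $t\in[0,T]$,
\[ \phi_t^{N,j}\longrightarrow \frac{1}{2\pi}\int_0^{2\pi}S_t(e^{i\theta})\,e^{-ij\theta}\,d\theta=\phi_t^j, \qquad N\to\infty, \]
the final equality being the identification of Taylor coefficients of the analytic function $S_t$ on $|z|<1$ with the Fourier coefficients of its boundary values. The lower-triangular form of $\Phi^\infty(t)$ follows by applying the same DFT identity to indices $j$ near $N-1$, which correspond to negative Fourier coefficients of $S_t$ and therefore vanish since $S_t$ extends analytically inside the unit disk.

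The principal technical hurdle is rigorous control of $S_t(z)$ near $z=1$, where $w=\sqrt{\epsilon(1-z)}$ is branched; one must verify continuity (and ideally some smoothness) of $S_t(e^{i\theta})$ in $\theta$ near $\theta=0$ in order to invoke Riemann-sum convergence. This requires expanding the numerator and denominator of the closed-form expression in Lemma \ref{inf_sumo} in powers of $w$ and showing that only even powers survive, so that $S_t$ is in fact a convergent power series in $1-z$ near $z=1$. Once that analytic-continuity verification is complete, the remainder of the argument reduces to standard numerical Fourier analysis, and the pointwise convergence $\phi_t^{N,j}\to \phi_t^j$ for every $j\in\mathbb N$ and $t\in[0,T]$ follows at once.
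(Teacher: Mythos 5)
First, be aware that the paper does not prove this statement: it is explicitly a conjecture, supported only by the numerical evidence of Figure \ref{fig:finitephi}, and Remark \ref{sum_phi_to_0} records it as an open problem (reduced there to showing $\sum_{k=j+1}^{N-1}\phi^{N,k}_t\phi^{N,N+j-k}_t\to 0$). So there is no paper proof to compare against; your proposal is an actual attack on the open question, and the core of it is sound. The observation that $\Phi^N$, $\mathbf E$, $\mathbf C$ all lie in the commutative circulant algebra generated by the cyclic shift, so that \eqref{matriRicc1} decouples under the DFT into the scalar Riccati problems $\dot\mu_k=\mu_k^2-\epsilon(1-\omega^k)$, $\mu_k(T)=c(1-\omega^k)$ with $\mu_k=S^N_t(\omega^k)$, is correct and is in fact implicit in the paper's own computation in Appendix \ref{appendixsumo}: evaluating $\dot S^N_t(z)=(S^N_t(z))^2+(1-z^N)[\cdots]-(1-z)\epsilon$ at $z=\omega^k$ kills the coupling term. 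Your Riemann-sum/Fourier-coefficient argument then delivers exactly the conjectured limit, including the vanishing of the superdiagonal entries as negative Fourier coefficients of a function analytic in the disk.

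Three points must be nailed down to make this complete. (i) Global existence on $[0,T]$ of the complex scalar Riccati solutions: for $z=\omega^k\ne 1$ one has $\mathrm{Re}\,\sqrt{\epsilon(1-z)}>0$, and writing the closed form of Lemma \ref{inf_sumo} as $S_t(z)=w(AE+B)/(AE-B)$ with $w=\sqrt{\epsilon(1-z)}$, $A=w+c(1-z)$, $B=-w+c(1-z)$, $E=e^{2w(T-t)}$, one checks $|B/A|<1\le |E|$ (using $\mathrm{Re}(w\,\overline{c(1-z)})>0$), so the denominator never vanishes and there is no finite-time blow-up; this simultaneously gives global existence and uniqueness of the circulant solution of \eqref{matriRicc1}, since the circulant subalgebra is invariant under the locally Lipschitz Riccati flow. (ii) The regularity at $z=1$ that you flag does work out: in the notation above, the substitution $w\mapsto -w$ swaps $A$ and $B$ and replaces $E$ by $E^{-1}$, leaving $S_t$ invariant, so $S_t$ is a single-valued analytic function of $w^2=\epsilon(1-z)$ with a removable singularity at $z=1$ (where $S_t\to 0$), and continuity of $\theta\mapsto S_t(e^{i\theta})$ follows. (iii) The final identification of the $j$-th Fourier coefficient with $\phi^j_t$ requires that the Taylor coefficients of the closed-form $S_t$ coincide with the solution of \eqref{eq18}; this is the content of Lemma \ref{inf_sumo}, whose proof in the paper is itself formal (term-by-term differentiation of the generating series), so you should either supply a growth bound on $\phi^j_t$ guaranteeing a positive radius of convergence or verify directly that the Taylor coefficients of the closed form satisfy the triangular, hence uniquely solvable, system \eqref{eq18}. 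With these verifications your argument closes the conjecture and, as a by-product, establishes the claim of Remark \ref{sum_phi_to_0}.
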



\begin{remark} \label{sum_phi_to_0}
Proving this conjecture is equivalent to show that $\sum_{k=j+1}^{N-1} \phi_t^{N,k}\phi_t^{N,N+j-k}\to 0$ as $N\to\infty$. For instance, for $j=0$, one needs to show that $\sum_{k=1}^{N-1} \phi_t^{N,k}\phi_t^{N,N-k}\to 0$. As of now, this remains an open problem.
\end{remark}

Our conjecture  is substantiated by numerical evidences presented below. 

\subsection{Numerical Results}
Using the methods given in \cite{VanghanIEEE69}, we can get the numerical solution of the matrix Riccati equation (\ref{matriRicc1}). Taking $\epsilon=2,\, c=1,\, T=10$ (large terminal time), Figure \ref{fig:finitephi} (a)-(b) show the behaviors of the $\phi$ functions defined by  the system of differential equations  (\ref{eq18}) for $N=4$ and $N=100$. They converge to the constant solutions of the infinite game given in Section \ref{section 4}, except in the tail close to maturity as $T$ is large but not infinite. This result confirms our conjecture stated in the previous section. Figure 
\ref{fig:finitephi}  (c) shows the behavior of the function $\sum_{k=1}^{N-1} \phi_t^{N,k}\phi_t^{N,N-k}$ for different values of $N$. As we can see, the sum converges to $0$ when $N$ becomes larger, which supports the statement in Remark \ref{sum_phi_to_0}. 
Though these numerical results give us strong evidence and confidence that the conjecture is true, a mathematical proof is still needed and it is part of our ongoing research.

\begin{figure}[H]
  \centering
  \begin{subfigure}[b]{0.33\linewidth}
    \includegraphics[width=\linewidth]{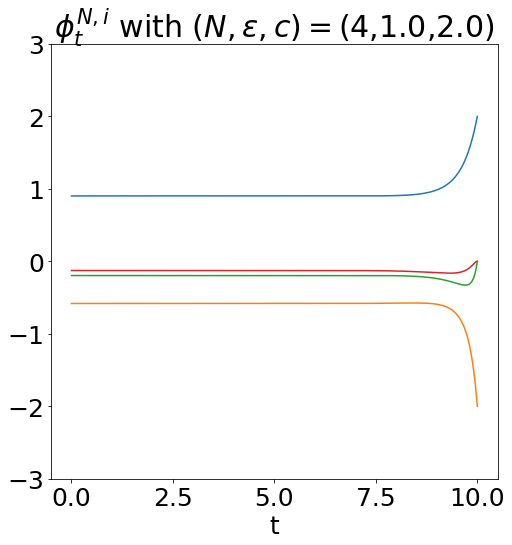}
    \caption{N=4}
  \end{subfigure}
  \begin{subfigure}[b]{0.33\linewidth}
    \includegraphics[width=\linewidth]{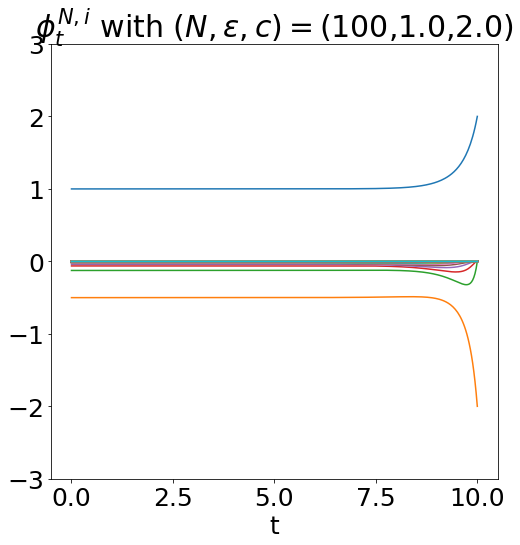}
    \caption{N=100}
  \end{subfigure}
  \begin{subfigure}[b]{0.33\linewidth}
  \includegraphics[scale = 0.16]{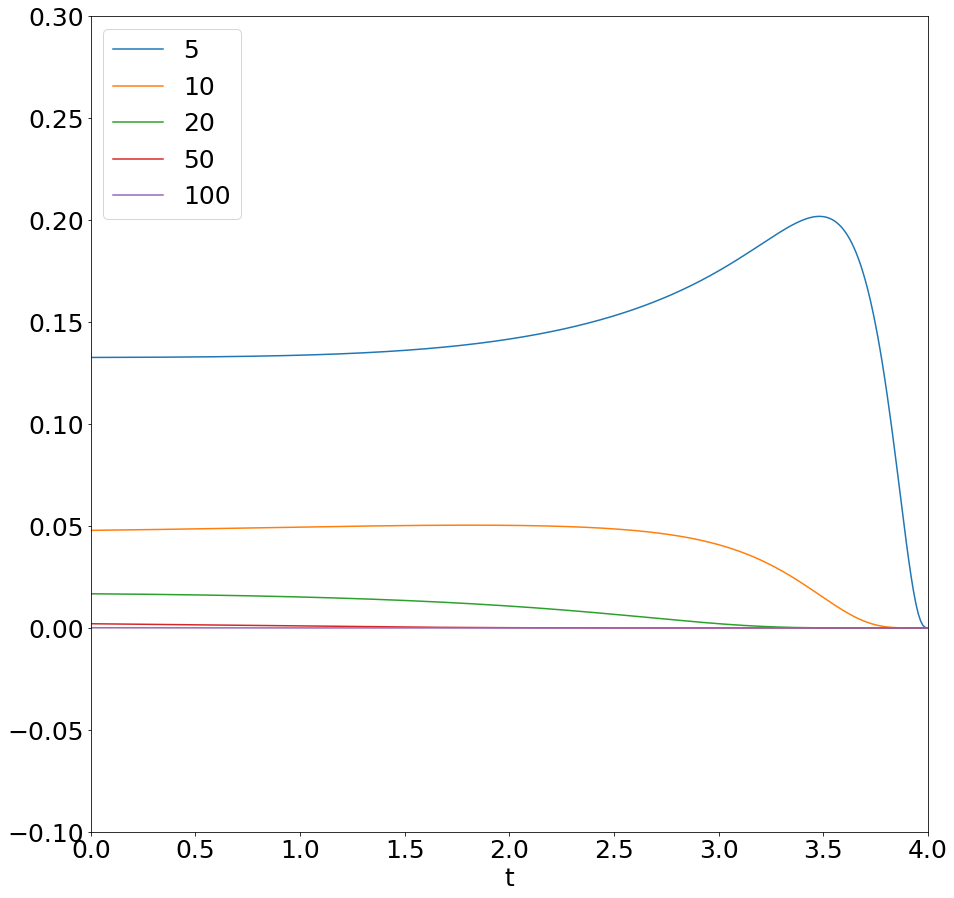}
  \caption{$\sum\limits_{k=1}^{N-1} \phi_t^{N,k}\phi_t^{N,N-k}$} 
  \end{subfigure} 
  \caption{As $N$ increases, the blue line $\phi_t^{N,0}\to 1$, the orange line $\phi_t^{N,1}\to -\frac{1}{2}$, and $\phi_t^{N,k}\to 0$ for $\geq 2$ in (a)-(b). $\sum\limits_{k=1}^{N-1} \phi_t^{N,k}\phi_t^{N,N-k}$ for different values of $N$ in (c).}
  \label{fig:finitephi}
\end{figure}


\section{Directed Infinite Tree Game}\label{section-tree-model}

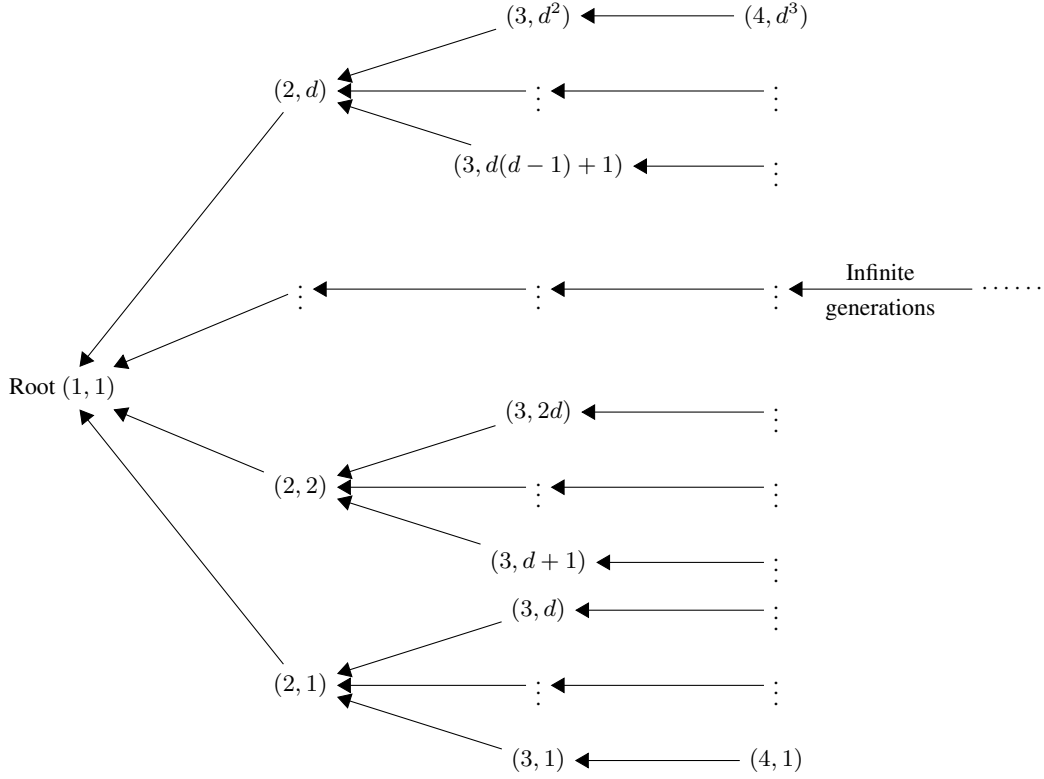
\begin{figure}[!h]
		\centering
		\begin{tikzpicture}[>={triangle 60},grow= right, 
		                                sibling distance = 7.5em,
    									level distance = 9em,
    									edge from parent/.style = {draw,{triangle 60}-}, 
    									every node/.style       = {font=\footnotesize},
  level 2/.style={sibling distance=1cm}]
  \node {Root $(1,1)$}
    child {node {$(2,1)$}
      child {node {$(3,1)$}
     	 child {node {$(4,1)$}
     	 }}
      child {node {$\vdots$}
      		child {node {$\vdots$}}}
      child {node {$(3,d)$}
      		child {node {$\vdots$}}}
   			 }
    child {node {$(2,2)$}
    child {node {$(3,d+1)$}
    			child {node {$\vdots$}}}
      child {node {$\vdots$}
     	 child {node {$\vdots$}}}
      child {node {$(3,2d)$}
      		child {node {$\vdots$}}}
    			}
    	child {node {$\vdots$}
    		child {node {$\vdots$}
    			child {node {$\vdots$}
    			child {node {$\cdots\cdots$}
    			edge from parent node [above] {Infinite}
                           node [below] {generations }}
    			}}
    	}
    	child {node {$(2,d)$}
    	child {node {$(3,d(d-1)+1)$}
    			child {node {$\vdots$}}}
      child {node {$\vdots$}
      		child {node {$\vdots$}}}
      child {node {$(3,d^2)$}
      	child {node {$(4,d^3)$}}
      		}
    	};
\end{tikzpicture}
	\caption{Directed Tree Network}
	\label{fig:tree}
\end{figure}

We describe a stochastic game on a directed tree structure with $N \ge 2$ generations first. 
Starting with one player in the root node denoted by $(1,1)$ in the first generation, recursively each parent has a fixed, common number of descendants, denoted by $d\,\geq\, 1$, and there are $d^{n-1}$ players in the $n$-th generation for $n \ge 1$. 
For $1\leq n\leq N, 1\leq k\leq d^{n-1}$, $X^{n,k}$ represents the state of the $k$-th individual of the $n$-th generation, and its direct descendants in the $(n+1)$st generation are labelled as $\{X^{n+1,(k-1)d+1},X^{n+1,(k-1)d+2},\cdots,X^{n+1,kd}\}$. We consider the stochastic differential game of players in the $N$ generations and then we generalize to a stochastic differential game in a directed infinite tree by considering its limit as $N \to \infty$. The network is shown in Figure \ref{fig:tree}.

We assume the dynamics of the states of the players are given by the stochastic differential equations of the form:
\begin{equation}\label{dynamic-tree}
    {\mathrm d}X_t^{n,k}=\a_t^{n,k}  {\mathrm d}t+\s  {\mathrm d}W_t^{n,k}, \quad 0\leq t\leq T,
\end{equation}
where $(W_t^{n,k})_{0\leq t\leq T},1\leq n\leq N,1\leq k\leq d^{n-1}$ are one-dimensional independent standard Brownian motions. Similarly, we assume that the diffusion is one-dimensional and the diffusion coefficients are constant and identical denoted by $\sigma>0$. The drift coefficients $\alpha^{n,k}$'s are adapted to the filtration of the Brownian motions and satisfy $\mathbbm{E}[\int_0^T |\alpha_t^{n,k}|^2 dt]<\infty$. 
The system starts at time $t = 0$ from $i.i.d.$ square-integrable random variables $X_0^{n,k} = \xi_{n,k}$ independent of the Brownian motions and, without loss of generality,  we assume ${\mathbbm E}(\xi_{n,k}) = 0$ for every pair of $(n,k)$. 

In this model, among the first $N-1$ generations, each player $({n,k})$ chooses its own strategy $\alpha^{n,k}$ in order to minimize its objective function of the form: for $ 1\leq n< N $
\begin{equation}\label{cost-tree}
\begin{split} 
& J^{n,k}(\a^{m,l};1\leq m \leq N, 1\leq l \leq d^m)  \\
& =\mathbbm{E} \bigg\{ \displaystyle \int_0^T \Big(\frac{1}{2}(\a^{n,k}_t)^2 
+\frac{\e}{2} \Big (\frac{1}{d}\sum_{i=(k-1)d+1}^{kd} X_t^{n+1,i}-X_t^{n,k} \Big)^2\Big) {\mathrm d}t 
+\frac{c}{2}\Big(\frac{1}{d}\sum_{i=(k-1)d+1}^{kd} X_T^{n+1,i}-X_T^{n,k} \Big)^2\bigg\},
\end{split} 
\end{equation}
for some constants $\e>0$ and $c\geq 0$. The running cost and the terminal cost functions are defined by $f^{n,k}(x,\a^{n,k})=\frac{1}{2}(\a^{n,k})^2+\frac{\e}{2}(\frac{1}{d}\sum_{i=(k-1)d+1}^{kd} x^{n+1,i}-x^{n,k})^2$ and $g^{n,k}(x)=\frac{c}{2}(\frac{1}{d}\sum_{i=(k-1)d+1}^{kd} x^{n+1,i}-x^{n,k})^2$, respectively. For simplicity, the behaviours of the $N$-th generation are described by the boundary condition where all the players $\{X^{N,k},1\leq k\leq d^{N-1}\}$ are attracted to $0$. The cost functional for player $(N,k)$ is given by:
\begin{equation*}
\quad J^{N,k}(\a^{N,k})=\mathbbm{E} \bigg\{ \displaystyle \int_0^T \big(\frac{1}{2}(\a^{N,k}_t)^2+\frac{\e}{2}(X_t^{N,k})^2\big) {\mathrm d}t +\frac{c}{2}(X_T^{N,k})^2\bigg\} 
\end{equation*}
for $k = 1, \ldots , d^{N-1}$. Since players of the last generation do not  depend on the other players, the boundary condition defines a self-controlled problem for the last generation. 

Now, inspired by the conclusion in Section \ref{section 2}, as the number $N$ of generations goes to infinity, i.e., $N \to \infty$, the effect of the boundary condition should vanish. Thus it is natural and reasonable that we decide to pass the $N$-generation finite tree to an infinite tree with infinite number of generations, and study the Nash equilibrium of the infinite-tree game. We still assume each parent has $d$ descendants. The dynamics of the states of players and the cost  functions are the same as (\ref{dynamic-tree}) and (\ref{cost-tree}) with $n\geq 1 $.

\subsection{Open-Loop Nash Equilibria}
We search for an open-loop Nash equilibrium of the directed infinite-tree system among strategies $\{\a^{n,k};n\geq 1,1\leq k\leq d^{n-1}\}$.
The Hamiltonian for player $(n,k)$ is of the form:
\begin{equation*}
\begin{split} 
& H^{n,k}(x^{m,l},y^{n,k;m,l},\a^{m,l};m\in \mathbb N,1\leq l\leq d^{m-1}) \\
& =\sum_{m=1}^{M_n}\,\sum\limits_{l=1}^{d^{m-1}} \a^{m,l} y^{n,k;m,l}+\frac{1}{2}(\a^{n,k})^2+\frac{\e}{2}  \Big(\frac{1}{d}\sum_{i=(k-1)d+1}^{kd} x^{n+1,i}-x^{n,k} \Big)^2,
\end{split} 
\end{equation*}
assuming it is defined on $Y^{n,k}_t$'s where {\it only finitely many}  $Y^{n,k;m,l}_t$'s are non-zero for every given $(n,k)$. Here, $M_n$ represents a depth of this finite dependence, a finite number depending on $n$ with $M_n>n$ for $n \ge 1$. This assumption is checked in Remark \ref{finiteYs-tree} below. Thus, the Hamiltonian $H^{n,k}$ for player $(n,k)$ is well defined for $n \ge 1$.

The adjoint processes $Y_t^{n,k}=(Y_t^{n,k;m,l};m\in \mathbb N, 1\leq l\leq d^{m-1})$ and $Z_t^{n,k}=(Z_t^{n,k;m,l;p,q};m,p\in \mathbb N, 1\leq l\leq d^{m-1},1\leq q\leq d^{p-1})$ for $n\in \mathbb N,1\leq k\leq d^{n-1}$ are defined as the solutions of the backward stochastic differential equations (BSDEs):
\begin{equation} \label{eq: last BSDEs}
    \left\{
  \begin{array}{ll}
     {\mathrm d}Y_t^{n,k;m,l}&=-\partial_{x^{m,l}}H^{n,k} (X_t,Y_t^{n,k},\a_t){\mathrm d}t+ \displaystyle \sum\limits_{p=1}^\infty \sum\limits_{q=1}^{d^{p-1}} Z_t^{n,k;m,l;p,q}{\mathrm d}W_t^{p,q}\\
    &=-\e\bigg[  \displaystyle \Big(\frac{1}{d}\sum_{i=(k-1)d+1}^{kd} X_t^{n+1,i}-X_t^{n,k} \Big)\Big(\frac{1}{d}\sum_{i=(k-1)d+1}^{kd}  \d_{(n+1,i),(m,l)}-\d_{(n,k),(m,l)} \Big)\bigg]{\mathrm d}t \\
    &\quad+ \displaystyle \sum\limits_{p=1}^\infty \sum\limits_{q=1}^{d^{p-1}} Z_t^{n,k;m,l;p,q}{\mathrm d}W_t^{p,q},\\
    Y_T^{n,k;m,l}&=\partial_{x^{m,l}}g_{n,k}(X_T) \\
    & = c\cdot  \Big(\displaystyle \frac{1}{d}\sum_{i=(k-1)d+1}^{kd} X_T^{n+1,i}-X_T^{n,k}\Big)\Big(\frac{1}{d}\sum_{i=(k-1)d+1}^{kd} \d_{(n+1,i),(m,l)}-\d_{(n,k),(m,l)}\Big).
  \end{array}
\right.
\end{equation}

\begin{remark}\label{finiteYs-tree}
For every $(m,l)\neq (n,k)$ or $(n+1,i)$ where $(k-1)d+1\leq i\leq kd$, $ {\mathrm d}Y_t^{n,k;m,l}=\sum_{p=1}^\infty \sum_{q=1}^{d^{p-1}} Z_t^{n,k;m,l;p,q}{\mathrm d}W_t^{p,q}$ and $Y_T^{n,k;m,l}=0$ implies $Z_t^{n,k;m,l;p,q}=0$ for all $(p,q)$. Thus, there must be finitely many non-zero $Y^{n,k;m,l}$'s for every $(n,k)$. Hence, the Hamiltonian can be rewritten as 
\begin{equation*}
\begin{split} 
&H^{n,k}(x^{m,l},y^{n,k;n,k},y^{n,k;n+1,i},\a^{m,l};m\in N,1\leq l\leq d^{m-1},(k-1)d+1\leq i \leq kd)\\
&= \a^{n,k} y^{n,k;n,k}+ \sum_{i=(k-1)d+1}^{kd} \a^{n+1,i} y^{n,k;n+1,i}+\frac{1}{2}\Big(\a^{n,k})^2+\frac{\e}{2} (\frac{1}{d}\sum_{i=(k-1)d+1}^{kd} x^{n+1,i}-x^{n,k}\Big)^2.
\end{split} 
\end{equation*}
\end{remark}

When $(m,l)=(n,k)$, \eqref{eq: last BSDEs} becomes:
\begin{equation} \label{BSDE-tree}
 \left\{
    \begin{array}{ll}
        {\mathrm d}Y_t^{n,k;n,k}&=\e  \Big (\displaystyle \frac{1}{d}  \sum_{i=(k-1)d+1}^{kd}X_t^{n+1,i}-X_t^{n,k} \Big){\mathrm d}t +\sum_{p=1}^\infty \sum_{q=1}^{d^{p-1}} Z_t^{n,k;n,k;p,q}{\mathrm d}W_t^{p,q},\\
    Y_T^{n,k;n,k}&=-c \Big ( \displaystyle \frac{1}{d}\sum_{i=(k-1)d+1}^{kd}X_T^{n+1,i}-X_T^{n,k} \Big).
    \end{array}
    \right.
\end{equation}

By minimizing the Hamiltonian with respect to $\a^{n,k}$, we can get an open-loop Nash equilibrium:  $\hat{\a}^{n,k}=-y^{n,k;n,k}$ for all $(n,k)$. Considering the BSDE system, we make the ansatz of the form:
\begin{equation}\label{ansatz-tree}
 Y_t^{n,k;n,k}=\sum\limits_{i=0}^\infty\phi_t^i\sum\limits_{j=0}^{d^i-1}X_t^{n+i,d^i k-j} =\sum\limits_{m=n}^\infty\phi_t^{m-n}\sum\limits_{j=0}^{d^{m-n}-1}X_t^{m,d^{m-n}k-j},
\end{equation}
for some deterministic scalar function $\phi_t$ satisfying the terminal conditions: $\phi_T^0=c,\phi_T^1=-\frac{c}{d},\phi_T^k=0$ for $k\geq 2$.
Using the ansatz, the optimal strategy $\hat{\alpha}^{n,k}$  and the forward equation for $X_{\cdot}^{n,k}$ in (\ref{dynamic-tree}) become:
\begin{equation}\label{particle_system_tree}
 \left\{
  \begin{array}{ll}
    \hat{\a}_t^{n,k}&=-Y_t^{n,k;n,k}=- \displaystyle \sum_{m=n}^\infty\phi_t^{m-n}\sum_{j=0}^{d^{m-n}-1}X_t^{m,d^{m-n}k-j},\\
     {\mathrm d}X_t^{n,k}&=- \displaystyle  \sum_{m=n}^\infty\phi_t^{m-n} \displaystyle \sum_{j=0}^{d^{m-n}-1}X_t^{m,d^{m-n}k-j} {\mathrm d}t+\s  {\mathrm d}W_t^{n,k},
  \end{array}
  \right.
\end{equation}
which gives: for $0 \le t \le T$
\[
{\mathrm d}X_t^{r,d^{r-n}k-s}= -\sum\limits_{p=r}^\infty\phi_t^{p-r}\sum\limits_{j=0}^{d^{p-r}-1}X_t^{p,d^{p-n} k-d^{p-r}s-j} {\mathrm d}t+\s {\mathrm d}W_t^{r,d^{r-n}k-s}.
\]

Differentiating the ansatz (\ref{ansatz-tree}) and substituting \eqref{particle_system_tree}, we obtain:
\begin{equation}\label{decomp-tree}
 \begin{split}
    {\mathrm d}Y_t^{n,k;n,k}&=\sum\limits_{r=n}^\infty\dot{\phi}_t^{r-n}\sum\limits_{s=0}^{d^{r-n}-1}X_t^{r,d^{r-n}k-s}{\mathrm d}t+ \sum\limits_{r=n}^\infty\phi_t^{r-n}\sum\limits_{s=0}^{d^{r-n}-1}{\mathrm d}X_t^{r,d^{r-n}k-s}\\
    &\stackrel{def}{=} \Romannum{1}\,{\mathrm d}t-\Romannum{2}\,{\mathrm d}t+\s \sum\limits_{r=n}^\infty\phi_t^{r-n}\sum\limits_{s=0}^{d^{r-n}-1} {\mathrm d}W_t^{r,d^{r-n}k-s}, 
 \end{split}
\end{equation}
where the drift term consists of two terms. First,
\begin{equation*}
  \Romannum{1} =\sum_{r=n}^\infty\dot{\phi}_t^{r-n}\sum_{s=0}^{d^{r-n}-1}X_t^{r,d^{r-n}k-s}=\sum_{r=0}^\infty\dot{\phi}_t^{r}\sum_{s=0}^{d^{r}-1}X_t^{n+r,d^{r}k-s}, 
\end{equation*}
and then, the second term is 
\begin{equation*}
\begin{split}
 \Romannum{2} 
 =\sum_{r=n}^\infty\phi_t^{r-n}\sum_{s=0}^{d^{r-n}-1} \sum_{p=r}^\infty\phi_t^{p-r}\sum_{j=0}^{d^{p-r}-1}X_t^{p,d^{p-n} k-d^{p-r}s-j}  
        = \sum_{r=0}^\infty \sum_{i=0}^{r}\phi_t^{i} \phi_t^{r-i}\sum_{s=0}^{d^{i}-1}\sum_{j=0}^{d^{r-i}-1}X_t^{n+r,d^{r} k-d^{r-i}s-j}.
\end{split}
\end{equation*}
for $0 \le t \le T$. 
Thus, (\ref{decomp-tree}) can be rewritten as:
\begin{equation}\label{ito-tree}
 \begin{split}
         {\mathrm d}Y_t^{n,k;n,k}&=\Romannum{1}\,{\mathrm d}t-\Romannum{2}\,{\mathrm d}t+\s \sum\limits_{r=n}^\infty\phi_t^{r-n}\sum\limits_{s=0}^{d^{r-n}-1} {\mathrm d}W_t^{r,d^{r-n}k-s}\\
        &= \sum\limits_{r=0}^\infty\dot{\phi}_t^{r}\sum\limits_{s=0}^{d^{r}-1}X_t^{n+r,d^{r}k-s}{\mathrm d}t-\sum\limits_{r=0}^\infty \sum\limits_{i=0}^{r}\phi_t^{i} \phi_t^{r-i}\sum\limits_{s=0}^{d^{i}-1}\sum\limits_{j=0}^{d^{r-i}-1}X_t^{n+r,d^{r} k-d^{r-i}s-j}{\mathrm d}t\\
        & \quad {} +\s \sum\limits_{r=n}^\infty\phi_t^{r-n}\sum\limits_{s=0}^{d^{r-n}-1} {\mathrm d}W_t^{r,d^{r-n}k-s}.
\end{split}
\end{equation}

Now comparing the two It\^o's decompositions (\ref{BSDE-tree}) and (\ref{ito-tree}), we obtain first the processes 
$Z_t^{n,k;n,k;p,q}$ from the martingale terms :
\begin{equation*}
   Z_t^{n,k;n,k;p,q}=\s\phi_t^{p-n} \text{ for } p\geq n \text{ and } 1\leq q\leq d^{p-n}\, ;\quad Z_t^{n,k;n,k;p,q}=0,  \text{ otherwise}, 
\end{equation*}
and we obtain second from the drift terms: 
\begin{equation} \label{riccati_tree}
\begin{array}{rll}
     \text{for}\ k=0:&\Dot{\phi}_t^0=\phi_t^0\cdot \phi_t^0-\e , &\phi_T^0=c, \\
     \text{for}\ k=1:& \Dot{\phi}_t^1=2\phi_t^0\cdot \phi_t^1+\frac{\e}{d} , &\phi_T^1=-\frac{c}{d},\\
\text{for}\ k\geq 2:& \Dot{\phi}_t^k=\phi_t^0\cdot \phi_t^k+\phi_t^1\cdot\phi_t^{k-1}+\cdots+\phi_t^{k-1}\cdot\phi_t^1+\phi_t^k\cdot\phi_t^0 ,&  \phi_T^k=0.
\end{array}
\end{equation}

This Riccati system is closely related to the one in (\ref{eq18}) for the infinite-player directed chain game and we can have a similar lemma.

\begin{Lemma}\label{inf_sumo_tree}
Let $\phi_t^{(k)}=\phi_t^{k}$ in (\ref{riccati_tree}) to avoid confusion. We have $\sum_{k=0}^{\infty} d^k \phi_t^{k}=0$, and the functions $\phi^k$'s can be obtained by a series expansion.
\end{Lemma}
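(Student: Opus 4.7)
The plan is to reduce this lemma to Lemma \ref{inf_sumo} by a simple change of variables that collapses the branching factor $d$ of the tree into the already-solved chain system. Define the rescaled sequence $\tilde{\phi}_t^k := d^k \phi_t^k$ for $k \ge 0$ and $0 \le t \le T$, and verify that $(\tilde{\phi}_t^k)_{k \ge 0}$ solves the Riccati system (\ref{eq18}) with the same parameters $\varepsilon$ and $c$, and with the same terminal conditions $\tilde{\phi}_T^0 = c$, $\tilde{\phi}_T^1 = -c$, $\tilde{\phi}_T^k = 0$ for $k \ge 2$.

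The verification consists of a one-line computation per equation in (\ref{riccati_tree}). For $k=0$ the multiplier is $d^0 = 1$ and nothing changes. For $k=1$, multiplying the equation $\dot{\phi}_t^1 = 2\phi_t^0 \phi_t^1 + \varepsilon/d$ by $d$ gives $\dot{\tilde{\phi}}_t^1 = 2\tilde{\phi}_t^0 \tilde{\phi}_t^1 + \varepsilon$, and the terminal value becomes $d \cdot (-c/d) = -c$. For $k \ge 2$, multiplying the convolution identity by $d^k$ and distributing the factor as $d^j \cdot d^{k-j}$ yields $\dot{\tilde{\phi}}_t^k = \sum_{j=0}^{k} \tilde{\phi}_t^j \tilde{\phi}_t^{k-j}$, which is exactly the form required. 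Thus the scaled sequence satisfies the chain Riccati system (\ref{eq18}) with positive constants $\varepsilon, c > 0$.

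Applying Lemma \ref{inf_sumo} then gives $\sum_{k=0}^\infty \tilde{\phi}_t^k = 0$, which is precisely $\sum_{k=0}^\infty d^k \phi_t^k = 0$. For the series expansion, Lemma \ref{inf_sumo} provides a closed-form generating function $\tilde{S}_t(w) = \sum_{k=0}^\infty w^k \tilde{\phi}_t^k$ for $w \le 1$. The generating function for the tree coefficients is therefore
\begin{equation*}
S_t^{\text{tree}}(z) := \sum_{k=0}^\infty z^k \phi_t^k = \sum_{k=0}^\infty (z/d)^k \tilde{\phi}_t^k = \tilde{S}_t(z/d) \qquad \text{for } z \le d,
\end{equation*}
and in particular $S_t^{\text{tree}}(d) = \tilde{S}_t(1) = 0$, while the individual coefficients $\phi_t^k$ are recovered by extracting the Taylor coefficients of $\tilde{S}_t(z/d)$ at $z=0$, or equivalently $\phi_t^k = d^{-k} \tilde{\phi}_t^k$.

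The only subtle point is ensuring the uniqueness/class of solutions is preserved under the rescaling, so that the application of Lemma \ref{inf_sumo} to $(\tilde{\phi}_t^k)$ is legitimate. Since the map $(\phi_t^k) \mapsto (d^k \phi_t^k)$ is a linear bijection on the space of sequences in which well-posedness of the Riccati system is proved in Appendix \ref{appendix1}, this transfers without modification; the hardest work has already been absorbed into Lemma \ref{inf_sumo}, and no new analytical obstacle arises.
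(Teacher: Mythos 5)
Your proposal is correct and follows essentially the same route as the paper: Appendix \ref{appendix9} likewise introduces the rescaled sequence $\psi_t^{(k)}=d^k\phi_t^k$, observes that it satisfies the chain Riccati system, and then runs the generating-function argument of Lemma \ref{inf_sumo} to conclude $\sum_k d^k\phi_t^k=0$. The only cosmetic difference is that you invoke Lemma \ref{inf_sumo} directly for the rescaled system rather than re-deriving the ODE $\dot S_t(z)=(S_t(z))^2-\epsilon(1-z)$, which is a legitimate shortcut.
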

\begin{proof} Given in Appendix \ref{appendix9}.\end{proof}

\subsection{Catalan Markov Chain for the Directed Tree Model}
Without loss of generality, we assume $\e=1$ and $\sigma=1$.  Following section \ref{section 4}, by taking $T\to \infty$, we look at the stationary long-time behavior of the Riccati system (\ref{riccati_tree}) satisfying $\dot{\phi}^k=0$ for all $k$. Then the system gives the recurrence relation: 
\[
\phi^0=1, \quad \phi^1=-\frac{1}{2d}, \quad  \text{ and } \quad \sum\limits_{k=0}^n \phi^k\phi^{n-k}=0. 
\]
By using a moment generating function method as in Appendix \ref{moment_generating_method_tree}, we obtain the stationary solution (cf. \eqref{eq: phi01j}):
\[
\phi^0=1,\quad \phi^1=-\frac{1}{2d} ,\quad  \text{ and } \quad \phi^k=-\frac{(2k-3)!}{(k-2)!k!2^{2k-2}d^k}  \quad \text{ for } \quad k\geq 2. 
\]

Let $q_0=-\phi^0=-1, q_1=-d\phi^1=\frac{1}{2} $,  and $q_k=-d^k\phi^k=\frac{(2k-3)!}{(k-2)!k!}\,\frac{1}{2^{2k-2}} $ for $k\geq 2$.
We consider the continuous-time Markov chain with state space $\, \mathbb N \,$ and generator matrix: 
$$\mathbf{Q}_{d\text{-tree}}\,=-\left( \begin{array}{cccccc} 
\phi^0 & d\phi^1 &  \cdots&   d^k\phi^k & \cdots& \cdots \\
0 & \phi^0 & d\phi^1 &  \cdots &   d^k\phi^k& \ddots \\
0 &0 & \phi^0& \ddots& \ddots& \ddots\\
& \ddots & \ddots &\ddots  & \ddots& \ddots\\
\end{array} 
\right)
\,= \left( \begin{array}{ccccc} 
q_0 & q_1 &  \cdots & q_k & \cdots \\
0 & q_0 & q_1&  \ddots & \ddots \\
0 &0 & q_0 & \ddots &\ddots\\
& \ddots & \ddots &\ddots  & \ddots\\
\end{array} 
\right)
= \mathbf{Q}
,
$$ 
where $\mathbf{Q}$ is the generator matrix \eqref{eq: Q:sec4} of the continuous-time Markov chain for the  infinite-player directed chain game in section \ref{section 4}. 

Thus, as $T \to \infty$, the limit of the infinite particle system (\ref{particle_system_tree}) can be rewritten as a linear stochastic evolution equation of Ornstein-Uhlenbeck type: 
\begin{equation}  \label{evoution_eqn}
{\mathrm d}\mathbf{X}_t=\mathbf{Q_{d\text{-tree}}\,X}_t {\mathrm d}t+ {\mathrm d}\mathbf{W}_t,  \quad t \ge 0 
\end{equation}
where $\mathbf{X_.}=(\overline{X}_t^{k} = \sum_{i=1}^{d^{k-1}}X_{.}^{k,i}/d^{k-1},k\in \mathbb{N})$ with $\mathbf{X}_0=\mathbf{x}_0$ and $\mathbf{W_.}=( \overline{W}_t^{k} = \sum_{i=1}^{d^{k-1}}W_{.}^{k,i} / d^{k-1},k\in \mathbb{N})$ is a vector of averaged Brownian motions with mean $0$ and variance $t / d^{k} $ in each generation $k \in \mathbb N$. Its solution is formally given by 
\begin{equation}
\mathbf{X}_t=e^{t\mathbf{Q}_{d\text{-tree}}}\mathbf{x_0}+ \int_0^t e^{(t-s)\mathbf{Q}_{d\text{-tree}}}{\mathrm d}\mathbf{W}_s ;\quad t\geq 0.
\end{equation}


Similar to proposition \ref{sol_markovc} and appendix \ref{appendix3} and \ref{appendixvariance}, we can find the formula for $\overline{X}_t^{1} $ and its asymptotic variance. Proof is shown Appendix \ref{solution_X^1bar}. Since by definition: $X_t^{1,1}=\overline{X}_t^{1}$, then we have the following result  by the Markov chain method for the directed tree model:
\begin{prop}
The formula for the root node $X_t^{1,1}$ in \eqref{evoution_eqn} is:
\[
X_t^{1,1}\, =\, \sum\limits_{j=1}^{\infty}\int^{t}_{0}  \frac{\,(t-s)^{2(j-1)}  \,}{\, (j-1)!\,} \cdot \rho_{j-1}(- (t-s)^{2}) \, e^{-(t-s)} \cdot  {\mathrm d} \overline{W}_{s}^{j} , \quad t \ge 0 , 
\]
where $\rho_{i}(\cdot)$ is defined in \eqref{eq: rhokxProp1}. 
Moreover, the asymptotic variance of $X_t^{1,1}$ is finite, i.e.   
\[
\lim\limits_{t \to \infty}\text{Var} ( X_{t}^{1,1})\,=\,\frac{\,\sqrt{2}\,}{\,2\,}\cdot \dfrac{1}{\sqrt{1+\sqrt{\frac{d-1}{d}}}} \in \Big(\frac{1}{2}, \frac{\,\sqrt{2}\,}{\,2\,} \Big].
\]
\end{prop}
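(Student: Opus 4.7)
The plan has two parts: deriving the explicit formula for $X_t^{1,1}$, and then computing the limit of its variance. For the formula, I would first observe that the root generation has exactly one node, so $X_t^{1,1}=\overline{X}_t^{1}$, and that the generator $\mathbf Q_{d\text{-tree}}$ in \eqref{evoution_eqn} is, by construction, identical to the Catalan generator $\mathbf Q$ of Section \ref{section 4}. Consequently the formal solution $\mathbf X_t=\int_0^t e^{(t-s)\mathbf Q}\,d\overline{\mathbf W}_s$ admits the same matrix-exponential expansion $e^{\mathbf Qt}=\sum_{j\ge 0}\tfrac{t^{2j}}{j!}F^{(j)}(-t^{2})B^{j}$ used in the proof of Proposition \ref{sol_markovc}, and the identity $F^{(k)}(x)=\rho_k(x)e^{-\sqrt{-x}}$ with $\rho_k$ as in \eqref{eq: rhokxProp1} yields the claimed representation, the only change being that the driving noises are now the averaged Brownian motions $\overline{W}^j$.

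For the variance, It\^o isometry applied to the formula for $X_t^{1,1}$, using independence of the $\overline W^j$'s and $d\langle\overline W^j\rangle_s=d^{-(j-1)}ds$, gives
\begin{equation*}
\Var(X_t^{1,1})\,=\,\sum_{k=0}^{\infty}\frac{1}{d^{k}}\int_0^t\frac{(t-s)^{4k}}{(k!)^2}\,\rho_k(-(t-s)^2)^2\,e^{-2(t-s)}\,ds .
\end{equation*}
For $d=1$ this is precisely the expression handled in Proposition \ref{finite_limit_var}, returning $1/\sqrt2$ in the limit. To handle general $d$ as $t\to\infty$, I would use a generating-function / Parseval argument. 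Define $\varphi_k(s):=(e^{s\mathbf Q})_{1,k+1}=\tfrac{s^{2k}}{k!}F^{(k)}(-s^2)\ge 0$ and $\Phi(z,s):=\sum_{k\ge 0}\varphi_k(s)z^k$. A Taylor expansion of $F$ immediately yields the closed form $\Phi(z,s)=F(-s^2(1-z))=e^{-s\sqrt{1-z}}$, consistent with the Catalan generating function $S(z)=\sqrt{1-z}$ from Lemma \ref{inf_sumo}. Parseval on the circle $|z|=r:=1/\sqrt d$ gives
\begin{equation*}
\sum_{k\ge 0}\frac{\varphi_k(s)^2}{d^k}\,=\,\frac{1}{2\pi}\int_0^{2\pi}e^{-2s\,\mathrm{Re}\sqrt{1-re^{i\theta}}}\,d\theta ,
\end{equation*}
and interchanging the $s$- and $\theta$-integrations, followed by the algebraic simplification $\tfrac{1}{u+\bar u}=\tfrac{u-\bar u}{u^2-\bar u^2}$ with $u=\sqrt{1-re^{i\theta}}$ and the symmetry $\theta\mapsto-\theta$, reduces the asymptotic variance to
\begin{equation*}
\lim_{t\to\infty}\Var(X_t^{1,1})\,=\,\frac{\sqrt{2}}{2\pi}\int_0^\pi\frac{d\theta}{\sqrt{\sqrt{1-2r\cos\theta+r^2}+1-r\cos\theta}} .
\end{equation*}

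The remaining obstacle, and the technical heart of the proof, is to evaluate this definite integral in closed form. The target identity is
\begin{equation*}
\int_0^\pi\frac{d\theta}{\sqrt{\sqrt{1-2r\cos\theta+r^2}+1-r\cos\theta}}\,=\,\frac{\pi}{\sqrt{1+\sqrt{1-r^2}}} ,
\end{equation*}
from which the stated formula follows via $r^2=1/d$ and $\sqrt{1-r^2}=\sqrt{(d-1)/d}$. I would attempt this by introducing $\lambda=\sqrt{1-2r\cos\theta+r^2}$, so that the integrand becomes rational in $\lambda$, and then applying a tangent half-angle substitution to reduce to a standard form. Sanity checks are cheap: at $d=1$ the inner square root vanishes and the integral evaluates to $\pi$ (giving $1/\sqrt2$, consistent with Proposition \ref{finite_limit_var}), while in the $d\to\infty$ limit the integrand collapses to $1/\sqrt{2(1-r\cos\theta)}\to 1/\sqrt2$, giving variance $1/2$; the bounds $\lim_{t\to\infty}\Var(X_t^{1,1})\in(1/2,\sqrt2/2]$ then follow from monotonicity of $\sqrt{(d-1)/d}$ on $[1,\infty)$.
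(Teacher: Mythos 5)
Your derivation of the representation for $X_t^{1,1}=\overline{X}_t^1$ is correct and is exactly the paper's: the generator $\mathbf Q_{d\text{-tree}}$ coincides with the Catalan generator $\mathbf Q$, so the matrix-exponential expansion of Proposition \ref{sol_markovc} carries over verbatim, the only change being the averaged noises $\overline W^j$ with $d\langle\overline W^j\rangle_s=d^{-(j-1)}ds$. For the variance you then take a genuinely different route. The paper stays with the term-by-term series: it applies the Bessel moment identity $\int_0^\infty t^{\alpha-1}K_\nu(t)^2\,dt=\frac{\sqrt\pi}{4\Gamma((\alpha+1)/2)}\Gamma(\frac\alpha2)\Gamma(\frac\alpha2-\nu)\Gamma(\frac\alpha2+\nu)$ to each summand, reducing the limit to $\frac12\sum_{k\ge0}\binom{4k}{2k}\frac{1}{(2k+1)}(2d^{1/4})^{-4k}$, and closes with the known generating function $\sum_k\binom{4k}{2k}\frac{x^{-4k}}{2k+1}=\frac{\sqrt2}{4}x\sqrt{x^2-\sqrt{x^4-16}}$ evaluated at $x=2d^{1/4}$. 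Your Parseval argument is an attractive alternative: the computation $\sum_k\varphi_k(s)z^k=F(-s^2(1-z))=e^{-s\sqrt{1-z}}$, the identity $\mathrm{Re}\sqrt{w}=\sqrt{(|w|+\mathrm{Re}\,w)/2}$, and the interchange of the $s$- and $\theta$-integrals are all sound, and they correctly reduce the asymptotic variance to the stated single integral over $[0,\pi]$ with $r=d^{-1/2}$.

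The gap is that this final integral identity,
\begin{equation*}
\int_0^\pi\frac{d\theta}{\sqrt{\sqrt{1-2r\cos\theta+r^2}+1-r\cos\theta}}\;=\;\frac{\pi}{\sqrt{1+\sqrt{1-r^2}}}\,,
\end{equation*}
is asserted, not proved, and it is precisely where all the work lives. Your proposed substitution $\lambda=\sqrt{1-2r\cos\theta+r^2}$ turns the denominator into $\sqrt{((\lambda+1)^2-r^2)/2}$ and $d\theta$ into $2\lambda\,d\lambda/\sqrt{(\lambda^2-(1-r)^2)((1+r)^2-\lambda^2)}$; after the cancellation of the common factor $\lambda+1-r$ one is left with
\begin{equation*}
2\sqrt2\int_{1-r}^{1+r}\frac{\lambda\,d\lambda}{(\lambda+1-r)\sqrt{(\lambda-1+r)(1+r-\lambda)(1+r+\lambda)}}\,,
\end{equation*}
a cubic under the radical against a rational factor --- generically an elliptic integral, and a tangent half-angle substitution does not visibly reduce it to a ``standard form.'' The identity is true (it is forced by consistency with the series evaluation), but as written you have replaced one nontrivial summation by an equally nontrivial integral without closing either. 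To complete the argument you should either supply an honest evaluation of this integral, or simply revert at the last step to the paper's termwise route: integrate each $k$-summand via the Bessel moment formula and sum the resulting central-binomial series in closed form. The endpoint checks ($d=1$ giving $1/\sqrt2$, $d\to\infty$ giving $1/2$) and the monotonicity claim are fine once the formula itself is established.
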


\begin{remark}
When $d$ goes to infinity, we are in the regime of the mean field game. The asymptotic variance is $\frac{1}{2}$ which is consistent with the variance of an Ornstein--Uhlenbeck process where the particle is attracted to $0$ and the volatility and the mean reversion constant are both $1$.
\end{remark}

\section{Conclusion} \label{section 8}

We studied a linear-quadratic stochastic differential game on a directed chain network. We were able to identify Nash equilibria in the case of finite chain with various boundary conditions and in the case of an infinite chain. This last case allows for more explicit computation in terms of Catalan functions and Catalan Markov chain. The Catalan open-loop Nash equilibrium that we obtained is characterized by interactions with all the neighbors in one direction of the chain weighted by Catalan functions, event though the interaction in the objective functions is only with the nearest neighbor. Under equilibrium the variance of a state converges in the infinite time limit as opposed to the diverging behavior observed in the nearest neighbor dynamics studied in Detering, Fouque \& Ichiba \cite{Nils-JP-Ichiba2018DirectedChain}. Our analysis is extended to mixed games with directed chain and mean field interaction so that our game model includes the two extreme network interactions, fully connected and only one neighbor connection. It is also extended to game on a deterministic tree structure. Our ongoing and future research concerns games with interactions on directed tree-like  stochastic networks modeled as branching processes.


\newpage

\appendix
\section{Appendix}\label{Appendix}
\subsection{Proof of Lemma \ref{inf_sumo} in Section \ref{section 3}}\label{appendix1}
Define the generating function $S_t(z)=\sum_{k=0}^\infty z^k\ \phi_{t}^{(k)}$ where $0\leq z< 1$ with $\phi_{t}^{(k)}=\phi_{t}^{k}$ in (\ref{eq18}) to avoid confusion. Then substituting (\ref{eq18}), we obtain 
\begin{equation} \label{S_t}
\begin{split} 
\Dot{S}_t(z) &= \sum\limits_{k=0}^\infty z^k\Dot{\phi}_{t}^{(k)}\\
&= \big(\phi_{t}^{(0)}\phi_{t}^{(0)}-\epsilon\big)+z \big( \phi_{t}^{(0)}\phi_{t}^{(1)}+\phi_{t}^{(1)}\phi_{t}^{(0)}+\epsilon \big)+\cdots\\
&+z^k\,\big( \phi_{t}^{(0)}\phi_{t}^{(k)}+\phi_{t}^{(1)}\phi_{t}^{(k-1)}+\cdots +\phi_{t}^{(k-1)}\phi_{t}^{(1)}+\phi_{t}^{(k)}\phi_{t}^{(0)} \big)+\cdots\\
&=\bigg( \phi_{t}^{(0)}S_t(z)+z\phi_{t}^{(1)}S_t(z)+\cdots+z^k\phi_{t}^{(k)}S_t(z)+\cdots\bigg)-\epsilon+z\epsilon\\
&=(S_t(z))^2-\epsilon(1-z),\\
S_T(z)&=c(1-z).
\end{split} 
\end{equation}

\noindent $\,\bullet\,$ 
For $z=1$, we get the ODE: 
\begin{equation}\label{S ODE}
\Dot{S}_t(1)=(S_t(1))^2\, , \quad \quad  S_T(1)=0.
\end{equation}
The solution is $S_t(1)=0$, and we deduce:
\begin{equation*}
\sum\limits_{k=0}^\infty  \phi_{t}^{(k)}=0,\quad i.e.,\quad \phi_{t}^{(0)}=-\sum\limits_{k=1}^\infty  \phi_{t}^{(k)}.
\end{equation*}

One needs to be careful when taking $z=1$ because the series defining $S_t(1)$ may not converge. Instead, we take a sequence $\{z_n\}$ converging to $1$, the limit of $S_t(z_n)$ converges to the ODE (\ref{S ODE}), and we get the conclusion.

\noindent $\,\bullet\,$  For $z\neq 1$, the solution to the Riccati equation (\ref{S_t}) satisfies 
\begin{equation}\label{sol_St}
     \begin{split} 
     S_t(z)& = \dfrac{-\e(1-z)\big(e^{2\sqrt{\e(1-z)}(T-t)}-1\big)-c(1-z)\big(\sqrt{\e(1-z)}e^{2\sqrt{\e(1-z)}(T-t)}+\sqrt{\e(1-z)}\big)}{\big(-\sqrt{\e(1-z)}e^{2\sqrt{\e(1-z)}(T-t)}-\sqrt{\e(1-z)}\big)-c(1-z)\big(e^{2\sqrt{\e(1-z)}(T-t)}-1\big)}\\
     &=\dfrac{\big(-\e(1-z)-c\sqrt{\e(1-z)}(1-z)\big)e^{2\sqrt{\e(1-z)}(T-t)}+\e(1-z)-c\sqrt{\e(1-z)}(1-z)}{\big(-\sqrt{\e(1-z)}-c(1-z)\big)e^{2\sqrt{\e(1-z)}(T-t)}-\sqrt{\e(1-z)}+c(1-z)}\\
     &\xrightarrow[T\to \infty]{} 
     \sqrt{\e(1-z)}.
     \end{split} 
\end{equation}

\subsection{Catalan Markov Chain and Proposition \ref{sol_markovc} in Section \ref{section 4}} \label{CatalanMC-1}
We have the Catalan probabilities: $\sum\limits_{k=1}^\infty p_k=1$ and $p_k=\frac{1}{2}\, \sum\limits_{i=1}^{k-1}p_i p_{k-i}$. Then, it is easily seen that $\, \mathbf{Q}^{2} \, =\,  I - B\,$ with $\, B\,$ having $\,1\,$'s on the upper second diagonal and $\,0\,$'s elsewhere, i.e., 
\[
\mathbf{Q}^{2} \, =\, \left( \begin{array}{ccccc} 
1 & -1 &  0 & \cdots & \\
0 & 1 & -1 &  \ddots &  \\
& \ddots & \ddots &\ddots  & \\
\end{array} 
\right) \, = \, - J_{\infty} (-1)  \, , \quad J_{\infty} (\lambda) \, :=\,  \left( \begin{array}{ccccc} 
\lambda & 1 &  0 & \cdots & \\
0 & \lambda & 1 &  \ddots &  \\
& \ddots & \ddots &\ddots  & \\
\end{array} 
\right) \, . 
\]
Here, $\, J_{\infty}(\lambda) \,$ is the infinite Jordan block matrix with diagonal components $\,\lambda\,$.

The matrix exponential of $\, \mathbf{Q} t\,$, $\, t \ge 0 \,$, is written formally as
\[
\exp ( \mathbf{Q} t ) \, =\,  F(-\mathbf{Q}^{2} t^{2}) \, =\,  F ( J_{\infty} (-1) \cdot t^{2} ) \, , \, \, t \ge 0 \, ,  \quad F(x) \, :=\, \exp ( - \sqrt{ - x } )\, ,  \, \, x \in \mathbb C \, . 
\]
Since a smooth function of a Jordan block matrix can be expressed as 
\[
F(J_{\infty} (\lambda) ) \, =\, F(\lambda I+B)=\,\sum\limits_{k=0}^\infty \frac{F^{(k)}(\lambda)}{k!}\, B^k =\, \left( \begin{array}{cccccc} 
F(\lambda) & F^{(1)}(\lambda) & \frac{\,F^{(2)}(\lambda) \,}{\,2!\,} & \cdots & \frac{\,F^{(k)}(\lambda) \,}{\,k!\,} & \cdots \\
& \ddots & \ddots & \ddots & & \ddots \\
 & & \ddots & \ddots & \ddots & \\ 
\end{array} \right ) \, ,
\]
we get
\[
\exp (\mathbf{Q} t )=\, F(J(-\infty)\cdot t^2)=\, F((-I+B)t^2)=\,\sum\limits_{k=0}^\infty \frac{F^{(k)}(-t^2)}{k!}\, (Bt^2)^k=\,\sum\limits_{k=0}^\infty \frac{t^{2k}F^{(k)}(-t^2)}{k!}\, B^k  .
\]
The $\,(j,k)\,$-element of $\, \exp ( \mathbf{Q} t ) \,$ is formally given by 
\[
(\exp (\mathbf{Q} t ))_{j,k} \, =\,  \frac{\,t^{2(k-j)} \cdot F^{(k-j)}(-t^{2}) \,}{\, (k-j)!\,} \,  , \quad j \le k \, , \, \, \text{ where } \, \, F^{(k)} (x) \, :=\, \frac{\,{\mathrm d}^{k}  F \,}{\,{\mathrm d} x^{k}\,}(x)  \, ; \quad x > 0 \,, \, \, k \in \mathbb N \,  ,   
\]
and $\, (\exp ( \mathbf{Q} t))_{j,k} \, =\,  0 \,$, $\, j > k \,$ for $\, t \ge 0 \,$. Here the $\,k\,$-th derivative $\, F^{(k)}(x) \,$ of $\, F(\cdot) \,$ can be written as $\,F^{(k)}(x) \, =\,  \rho_{k}(x) e^{ - \sqrt{-x}}\,$, where $\, \rho_{k}(x) \,$ satisfies the recursive equation 
\[
\rho_{k+1}(x) \, =\, \rho^{\prime}_{k}(x) +\frac{\,\rho_{k}(x)\,}{\,2 \sqrt{ - x} \,} \, ; \quad k \ge 0 \, , \,  
\]
with $\, \rho_{0}(x) \, =\,  1 \, $, $\, x \in \mathbb C \,$. For example, 
\[
\rho_{0}(x) \, =\,  1\,,\quad \rho_{1}(x) \, :=\, \frac{+1}{2} (-x)^{-\frac{1}{2}}\,,\quad \rho_{2}(x) \, :=\, \frac{1}{4} (-x)^{-\frac{2}{2}}+\frac{+1}{4}(-x)^{-\frac{3}{2}},
\]
\[
\rho_{3}(x) \, :=\, \frac{1}{8} (-x)^{-\frac{3}{2}}+\frac{3}{8} (-x)^{-\frac{4}{2}}+\frac{3}{8} (-x)^{-\frac{5}{2}},
\]
\[
\rho_{4}(x) \, :=\, \frac{1}{16} (-x)^{-\frac{4}{2}}+\frac{6}{16} (-x)^{-\frac{5}{2}}+\frac{15}{16} (-x)^{-\frac{6}{2}}+\frac{15}{16} (-x)^{-\frac{7}{2}},
\]
\[
\rho_{5}(x) \, :=\, \frac{1}{32} (-x)^{-\frac{5}{2}}+\frac{10}{32} (-x)^{-\frac{6}{2}}+\frac{45}{32} (-x)^{-\frac{7}{2}}+\frac{105}{32} (-x)^{-\frac{8}{2}}+\frac{105}{32} (-x)^{-\frac{9}{2}}.
\]
More generally, by mathematical induction, we may verify 
\begin{eqnarray}
\rho_k(x) 
                &=&\frac{1}{2^k} \sum\limits_{j=k}^{2k-1}\, \frac{(j-1)!}{(2j-2k)!!(2k-j-1)!} \, (-x)^{\,-\frac{j}{2}}, \quad 
                k\geq 1. \ \label{rhok}
\end{eqnarray}
Therefore, substituting them into \eqref{solution_mc}, we obtain Proposition \ref{sol_markovc}. 

%

\subsection{Proof of Remark \ref{rho_k and var} in Section \ref{section 4}}\label{appendix3}
By $\rho_k$'s formulae in (\ref{rhok}), we have for $\nu \ge 0 $, $ k \ge 1$, 
\begin{equation*}
\begin{split}
\rho_k(-\nu^2)=&\frac{1}{2^k} \sum\limits_{j=k}^{2k-1}\,  \frac{(j-1)!}{(2j-2k)!!(2k-j-1)!} \,
\, =\,  \frac{\,1\,}{\,2^{k} \nu^{k}\,}\cdot \sqrt{\frac{\,2 \nu \,}{\,\pi \,} } \cdot  e^{\nu} \cdot K_{k-(1/2)} (\nu ) , 
\end{split}
\end{equation*}
 where $\, K_{n}(x)\,$ is the modified Bessel function of the second kind, i.e., 
\[
K_{n}(x) \, =\,  \int^{\infty}_{0} e^{-x \cosh t} \cosh (nt ) {\mathrm d} t \, ; \quad n > -1 , \,\,\, x > 0 \, . 
\]
Then, by the change of variables, we obtain 
\[
\text{Var} ( X_{0} (t))\, =\,  \sum\limits_{k=0}^{\infty} \int^{t}_{0} \frac{\,(t-s)^{4k}\,}{\,(k!)^{2} \,} \lvert  \rho_{k} (- (t-s)^{2}) \rvert^{2} e^{-2(t-s)}{\mathrm d} s 
\]
\[
{\, =\, \sum_{k=1}^{\infty}  \int^{t}_{0} \frac{\,2\,}{\,\pi\,}  \frac{\,\nu^{2k+1}\,}{\,(k!)^{2}\, 4^{k}\,} \big( K _{k-(1/2)}(\nu) \big)^{2} {\mathrm d} \nu + \frac{\,1 - e^{-2t}\,}{\,2\,} \, ; \quad t \ge 0 \, . } 
\]

\subsection{Proof of  Proposition \ref{finite_limit_var} in Section \ref{section 4}}\label{appendixvariance}
Using the following identities from the special functions 
\begin{equation*}
\begin{split} 
&\int_0^\infty t^{\alpha-1}(K_\nu(t))^2 dt= \frac{\sqrt{\pi}}{4\Gamma ((\alpha+1)/2 )} \Gamma\Big(\frac{\alpha}{2}\Big)\Gamma\Big(\frac{\alpha}{2}-\nu\Big)\Gamma \Big(\frac{\alpha}{2}+\nu\Big),\\
\\
&
\frac{\sqrt{2}}{4}x \sqrt{x^2-\sqrt{x^4-16}}=
\sum\limits_{k=0}^\infty {4k \choose 2k}\frac{1}{2k+1}\frac{1}{x^{4k}},\quad \text{for } x\geq 2. \\
\end{split} 
\end{equation*}
based on Remark \ref{rho_k and var}, we obtain the limit of variance of $X_{t}^{1}$, as $\,t \to \infty\,$, i.e.,  $\lim_{t\to \infty} \text{Var} ( X_{t}^{1}) =  $ 
\begin{equation*}
\begin{split}
& \frac{\,1\,}{\,2\,} + \sum_{k=1}^{\infty} \int^{\infty}_{0} \frac{\,2\,  s^{2k+1}\,}{\,\pi (k!)^{2} 4^{k}\,} \cdot [ K_{k-(1/2)}(s)]^{2} {\mathrm d} s 
=\, \frac{\,1\,}{\,2\,} + \sum_{k=1}^{\infty} \frac{\,2\,}{\,\pi \, (k!)^{2} 4^{k}\,} \int^{\infty}_{0} s^{2k+1} [ K_{k-(1/2)}(s)]^{2}{\mathrm d} s \,\\
\, \\
&=\, \frac{\,1\,}{\,2\,} + \sum_{k=1}^{\infty} \frac{\,2 \,}{\,\pi (k!)^{2} 4^{k}\,} \cdot \frac{\, \pi  \, \Gamma ( k + 1) \,  \Gamma ( 2k + (1/2)) \,}{\, 8\,  \Gamma ( k + (3/2)) \,}
{=\, \frac{\,1\,}{\,2\,} + \frac{1}{2} \sum_{k=1}^{\infty}   {4k \choose 2k} \frac{1}{2k+1} \frac{1}{2^{4k}} \,=\, \frac{1}{2} \sum_{k=0}^{\infty}   {4k \choose 2k} \frac{1}{2k+1} \frac{1}{2^{4k}} \,}\\
\, &=\,  \frac{\,1\,}{\,2\,}\, \cdot \, \frac{\sqrt{2}}{4}2\sqrt{2^2-0}=\frac{1}{\sqrt{2}}.  
\end{split}
\end{equation*}

\subsection{Proofs of Propositions \ref{aurocov_sim}- 
 \ref{crosscov_sim} in Section \ref{section 4}}\label{appendixcovariance}
From the expression (\ref{xforcovariance}) for $X_t^{1}$, 
the auto-covariance $\mathbbm{E}[X_s^{1}X_t^{1}] $ and the cross covariance $\mathbbm{E}[X_t^{1}X_t^{j+1}]$ are  
\begin{equation} \label{eq: autocov-crosscov}
\begin{split}
\mathbbm{E}[X_s^{1}X_t^{1}] 
&=\sum\limits_{i=0}^\infty  \frac{1}{\pi (i!)^2 2^{2i-1}}  \int_0^s(t-v)^{i+1/2} (s-v)^{i+1/2} K_{i-1/2}(t-v) K_{i-1/2}(s-v) {\mathrm d} v\\
&= \sum\limits_{i=0}^\infty \frac{1}{\pi (i!)^2 2^{2i-1}} \int_0^s  ((t-s+v)v)^{i+1/2}  K_{i-1/2}(t-s+v) K_{i-1/2}(v) {\mathrm d} v >  0 ;  \\
\mathbbm{E}[X_t^{1}X_t^{j+1}]
&=\sum\limits_{i=j}^\infty \int_0^t \frac{1}{\pi i!(i-j)! } \frac{(t-\nu)^{2i-j+1}}{2^{2i-j-1}}   K_{i-1/2}(t-\nu) K_{i-j-1/2}(t-\nu) {\mathrm d} \nu\\
&= \sum\limits_{i=0}^\infty  \frac{1}{\pi (j+i)!j!} \frac{1}{2^{j+2i-1}}  \int_0^t s^{j+2i+1} K_{j+i-1/2}(s) K_{i-1/2}(s) {\mathrm d} s \\
& \xrightarrow[t\to \infty]{} \sum\limits_{i=0}^\infty  \frac{1}{\pi (j+i)!j!} \frac{1}{2^{j+2i-1}}  \int_0^\infty s^{j+2i+1} K_{j+i-1/2}(s) K_{i-1/2}(s) {\mathrm d} s > 0 \, . 
\end{split}
\end{equation}

By the Cauchy-Schwarz inequality, as  $ t\to \infty$, the asymptotic cross covariance between $X_{t}^{1}$ and $X_{t}^{j+1}$ is bounded by  
\begin{equation} \label{eq: bound cross covariance}
\, \lim_{t \to \infty}\mathbbm E [ X_{t}^{1} X_{t}^{j+1} ] \le \lim_{t\to \infty} (\mathbb E [ (X_{t}^{1})^{2} ])^{1/2} \cdot (\mathbbm E [ (X_{t}^{j+1})^{2} ])^{1/2} \, =\,  \lim_{t\to \infty} \text{Var} ( X_{t}^{1}) \, =\, \frac{\,1\,}{\,\sqrt{2} \,}  
\end{equation}
for $j \ge 0$, because $X_{\cdot}^{1}$ and $X_{\cdot}^{j+1}$ have the same distribution. 

To compute the asymptotic auto-covariance, fix $s > 0 $ and let $t \to \infty$. By the asymptotic expansion of the modified Bessel function $K_{\alpha}(z) $, $z > 0 $, there exists a positive constant $c > 0 $ such that for every sufficiently large $t  (> s) $ 
\[
\sup_{i \ge 0  } \frac{\,1\,}{\,i! \, t^{i+1}\,}\int_0^s  ((t-s+v)v)^{i+1/2}  K_{i-1/2}(t-s+v) K_{i-1/2}(v) {\mathrm d} v \le c \cdot e^{- (t-s) } \,. 
\]
Then combining this estimate with (\ref{eq: autocov-crosscov}), we obtain 
\[
\mathbbm E [ X_{s}^{1} X_{t}^{1} ] \le \sum_{i=0}^{\infty} \frac{\,4c t^{i+1} e^{-(t-s)} \,}{\,\pi \, i ! \, 4^{i}\,}  \le \frac{\,4c t \,}{\,\pi \,} e^{-(t-s) + (t/4)} \xrightarrow[t \to \infty]{} 0 . 
\]

\subsection{Proof of Proposition \ref{infinite-sum=0} in Section \ref{section 6}}\label{infinite-sum0}
Adding equations (\ref{eq36}) and (\ref{eq39}), for $0<u<1$, we get:
\begin{align*}
    u\Dot{\phi}_t^{0}&=u^2(\phi_t^0)^2+2u(1-u)\psi_t\phi_t^0-u(1-u)\psi_t \sum\limits_{k=0}^\infty \phi_t^k-u\e .
\end{align*}
Then (\ref{eq38}) and (\ref{eq39}) can be written as:
\begin{align*}
    u\Dot{\phi}_t^1=2u^2\phi_t^0\phi_t^1+2u(1-u)\psi_t\phi_t^1+u\e, 
\quad     u\Dot{\phi}_t^k=u^2\sum\limits_{j=0}^k\phi_t^j\phi_t^{k-j}+2u(1-u)\psi_t\phi_t^k, \quad\text{ for}\ k\geq2.
\end{align*}
Define $S_t(z)=\sum\limits_{k=0}^\infty z^k \phi_{t}^{(k)}$ where $0\leq z\leq 1$ and $\phi_{t}^{(k)}=\phi_{t}^{k}$ in equations above to avoid confusion. Then
\begin{equation} 
\begin{array}{ll} \label{eq40}
u\Dot{S}_t(z) &= \displaystyle \sum\limits_{k=0}^\infty z^k u\Dot{\phi}_{t}^{(k)} 
= u^2(S_t(z))^2+u(1-u)\psi_tS_t(z)-u(1-z)\e,\\
uS_T(z)&=u(1-z)c. 
\end{array}
\end{equation}
For  $z=1$, we obtain the ODE:
\begin{equation}
u\Dot{S}_t(1)=u^2(S_t(1))^2+u(1-u)\psi_tS_t(1)\, ,\quad uS_T(1)=0.
\end{equation}
The solution is given by $S_t(1)=0$ and we deduce $\sum_{k=0}^\infty \phi_t^{(k)}=0.$

\subsection{About Table \ref{table:1} in Section \ref{section 6}}\label{table1}
From Proposition \ref{catalanMC-sol-general}, for $\, t \ge 0 \,$, we have:
\begin{equation}
\begin{split}
\text{Var} ( X_{t}^{1} ) \, &=\, \text{Var} \Big(  \sum_{k=0}^{\infty} \int^{t}_{0}  \frac{\,u^k (t-s)^{2k} \,}{\,k!\,} F^{(k)}(-(t-s)^{2}) {\mathrm d} W_{k}(s) \Big)  \\
&\, =\,  \sum_{k=0}^{\infty} \int^{t}_{0} \frac{\,u^{2k} (t-s)^{4k}\,}{\,(k!)^{2} \,} \lvert  \rho_{k} (- (t-s)^{2}) \rvert^{2} e^{-2(t-s)}{\mathrm d} s\\
&=  \sum_{k=1}^{\infty} \int^{t}_{0} \frac{\,2 u^{2k} \,}{\,\pi (k!)^{2} 4^k\,}\nu^{2k+1} (K_{k-\frac{1}{2}}(\nu))^2  {\mathrm d} \nu+\frac{1-e^{-2t}}{2}.
\end{split}
\end{equation}
As $t\to \infty$, for $u<1$, we obtain 
\begin{equation*}
\begin{split}
\lim_{t\to \infty} \text{Var} (X^{1}_{t})
\, &=\,  \frac{\,1\,}{\,2\,} + \sum_{k=1}^{\infty} \int^{\infty}_{0} \frac{\,2u^{2k}\,  s^{2k+1}\,}{\,\pi (k!)^{2} 4^{k}\,} \cdot [ K_{k-(1/2)}(s)]^{2} {\mathrm d} s 
= \frac{\,1\,}{\,2\,} \Big(1-\frac{u^2}{2} \Big)^{-\frac{1}{2}}<\infty.
\end{split}
\end{equation*}

Since we have 
\begin{equation*}
\begin{split}
X_{t}^{j}&= \sum\limits_{k=j}^\infty \bigintssss_0^t \frac{u^{k-j}(t-s)^{2(k-j)}}{(k-j)!} \rho_{k-j}(- (t-s)^{2}) \, e^{-(t-s)} {\mathrm d} W_{k}(s)
= \sum\limits_{i=0}^\infty \int_0^t \frac{u^i}{\sqrt{\pi} i!}  \frac{(t-s)^{i+1/2}}{2^{i-1/2}} K_{i-1/2}(t-s) {\mathrm d} W_{j+i}(s),
\end{split}
\end{equation*}
the (auto)covariance is: 
\begin{equation*}
\begin{split}
\mathbb{E}[X_{s}^{1} X_{t}^{1}]  
&=\sum\limits_{k=0}^\infty \int_0^s \frac{u^{2k}}{\pi (k!)^2 2^{2k-1}}  (t-\nu)^{k+1/2} (s-\nu)^{k+1/2} K_{k-1/2}(t-\nu) K_{k-1/2}(s-\nu) {\mathrm d} \nu\\
&= \sum\limits_{k=0}^\infty \int_0^s \frac{u^{2k}}{\pi (k!)^2 2^{2k-1}}  ((t-s+\alpha)\alpha)^{k+1/2}  K_{k-1/2}(t-s+\alpha) K_{k-1/2}(\alpha) {\mathrm d} \alpha  \neq 0 . 
\end{split}
\end{equation*}
\bigskip
The cross-covariance is:
\begin{equation*}
\begin{split}
\mathbbm{E}[X_{t}^{1} X_{t}^{k+1}]  
&=\sum\limits_{i=k}^\infty \int_0^t \frac{u^{2i-k}}{\pi i!(i-k)!)} \frac{(t-\nu)^{2i-k+1}}{2^{2i-k-1}}   K_{i-1/2}(t-\nu) K_{i-k-1/2}(t-\nu) {\mathrm d} \nu\\
&= \sum\limits_{j=0}^\infty  \frac{u^{k+2j}}{\pi (k+j)!j!} \frac{1}{2^{k+2j-1}}  \int_0^t s^{k+2j+1} K_{k+j-1/2}(s) K_{j-1/2}(s) {\mathrm d} s\,,\\
\end{split}
\end{equation*}
and as $t \to \infty$ 
it converges to
\begin{equation}\label{crosscov-u}
 \sum\limits_{j=0}^\infty  \frac{u^{k+2j}}{\pi (k+j)!j!} \frac{1}{2^{k+2j-1}}  \int_0^\infty s^{k+2j+1} K_{k+j-1/2}(s) K_{j-1/2}(s) {\mathrm d} s \quad(\mathbf{\neq 0,\, if\, u\neq 0} ),
\end{equation} 
and as in \eqref{eq: bound cross covariance}, we deduce the asymptotic upper bound 
 \begin{equation*}
\begin{split}
&\lim\limits_{t\to\infty} \mathbbm{E}[X^{(u)}_0(t)X^{(u)}_k(t)]
<\frac{1}{2}\Big(1-\frac{u^2}{2}\Big)^{-1/2} . 
\end{split}
\end{equation*}

\subsection{Proof of Proposition \ref{finite_sumo} in Section \ref{section 7}}\label{appendixsumo}
Define $S_t^N(z)=\sum_{k=0}^{N-1} z^k\phi_t^{N,k}$, then, by  (\ref{eq10}), we have:
\begin{equation}
\begin{split}
\dot{S}_t^N(z)&=\sum\limits_{k=0}^{N-1} z^k\dot{\phi}_t^{N,k} 
=(S_t^N(z))^2+(1-z^N)\big[\sum\limits_{j=0}^{N-2}z^j \cdot \sum\limits_{k=j+1}^{N-1} \phi_t^{N,k}\phi_t^{N,N+j-k}\big]-(1-z)\e, \\
\end{split}
\end{equation}
with $S_{T}^{N}(z) = (1-z) c $. For $z=1$, $\dot{S}_t^N(1)=(S_t^N(1))^2,\, S_T^N(1)=0$, and 
\begin{equation*}
S_t^N(1)=\sum\limits_{k=0}^{N-1} \phi_t^{N,k}=0,\, i.e., \quad \phi_t^{N,0}=-\sum\limits_{k=1}^{N-1} \phi_t^{N,k}.
\end{equation*} 

\subsection{Proof of Lemma \ref{inf_sumo_tree} in Section \ref{section-tree-model} }\label{appendix9}
Similar to the proof of lemma \ref{inf_sumo} in Appendix \ref{appendix1}, define $S_t(z)=\sum_{k=0}^\infty z^k\ \psi_{t}^{(k)}$where $0\leq z< 1$ and $\psi_t^{(k)}=d^k\phi_t^{k}$ in equation (\ref{riccati_tree}). The Riccati system for $\psi$ functions is given by:
\begin{equation*}
\begin{array}{rll}
     \text{for}\ k=0:&\Dot{\psi}_t^{(0)}=\psi_t^{(0)}\cdot \psi_t^{(0)}-\e , &\psi_T^{(0)}=c, \\
     \text{for}\ k=1:& \Dot{\psi}_t^{(1)}=2\psi_t^{(0)}\cdot \psi_t^{(1)}+\e , &\psi_T^{(1)}=-c,\\
\text{for}\ k\geq 2:& \Dot{\psi}_t^{(k)}=\psi_t^{(0)}\cdot \psi_t^{(k)}+\psi_t^{(1)}\cdot\psi_t^{(k-1)}+\cdots+\psi_t^{(k-1)}\cdot\psi_t^{(1)}+\psi_t^{(k)}\cdot\psi_t^{(0)} ,&  \phi_T^{(k)}=0.
\end{array}
\end{equation*}
Then similar to equation (\ref{S_t}):
\begin{equation}\label{sum_tree}
\begin{split} 
\Dot{S}_t(z) &= \sum\limits_{k=0}^\infty {z}^k\Dot{\psi}_{t}^{(k)}=(S_t(z))^2-\epsilon(1-z), \quad  0 \le t \le T \\
\end{split} 
\end{equation}
with $S_T(z)=dc(1-z)$. 
For $z=1$, we get the same ODE as (\ref{S ODE}): 
\begin{equation}
\Dot{S}_t(1)=(S_t(1))^2\, , \quad \quad  S_T(1)=0.
\end{equation}
The solution is $S_t(1)=0$. Because the series defining $S_t(1)$ may not converge, we take a sequence $\{z_n\}\to 1$. The limit of $S_t(z_n)$ converges to the ODE above, and we can get the conclusion. Then we deduce:
\begin{equation*}
\sum\limits_{k=0}^\infty  \psi_{t}^{(k)}=0,\quad i.e.,\quad \sum\limits_{k=0}^\infty d^k \phi_{t}^{(k)}=0.
\end{equation*}

\subsection{Stationary Solution of  \eqref{riccati_tree} in Section \ref{section-tree-model}}\label{moment_generating_method_tree}

Define $R_t(z)=\sum_{k=0}^\infty z^k\ \phi_{t}^{(k)}$ where $0\leq z< 1$ and $\phi_{t}^{(k)}=\phi_{t}^{k}$ in equation (\ref{riccati_tree}) to avoid confusion. Without loss of generality, we assume $\e=1$. Then $R_{T}(z) = c ( 1 - d^{-1} z) $ and for $0 \le t \le T $
\begin{equation} \label{sum_tree}
\begin{split} 
\Dot{R}_t(z) &= \sum\limits_{k=0}^\infty z^k\Dot{\phi}_{t}^{(k)}
= \sum\limits_{k=0}^\infty z^k \sum\limits_{j=0}^k \phi_{t}^{(j)}\phi_{t}^{(k-j)}-1+\frac{z}{d}
=(R_t(z))^2- \Big(1-\frac{z}{d} \Big).  
\end{split} 
\end{equation}

By taking $T\to \infty$, the constant solution of equation (\ref{sum_tree}) satisfying $\Dot{R}_t(z)=0$ is $R(z)=\sqrt{1-\frac{z}{d}}$. We can then find constant solutions for $\phi$ functions by taking Taylor expansion and comparing it with $R(z)=\sum_{k=0}^\infty z^k\ \phi^{(k)}$, because 
\begin{equation*}
\begin{split} 
R(z)&=\sqrt{1-\frac{z}{d}}=\sum\limits_{k=0}^\infty  \binom{\frac{1}{2}}{k} \Big(-\frac{z}{d}\Big)^k 
=1-\frac{1}{2d} z-\sum\limits_{k=2}^\infty \frac{(2k-3)!! }{(2d)^k k!} z^k. \\
\end{split} 
\end{equation*}

\subsection{Solution $\overline{X}_t^1$ in  \eqref{evoution_eqn} in Section \ref{section-tree-model} } 
\label{solution_X^1bar}
First, according to proposition \ref{sol_markovc}, the formula for $\overline{X}_t^1$ is: 
\[
\begin{split}
\overline{X}_t^1&\, =\, \sum\limits_{j=1}^{\infty}\int^{t}_{0}  \frac{\,(t-s)^{2(j-1)}  \,}{\, (j-1)!\,} \cdot \rho_{j-1}(- (t-s)^{2}) \, e^{-(t-s)} \cdot  {\mathrm d} \overline{W}_{s}^{j}\\
&\, =\, \sum\limits_{k=0}^{\infty}\int^{t}_{0}  \frac{\,(t-s)^{2k}  \,}{\, k!\,} \cdot \rho_{k}(- (t-s)^{2}) \, e^{-(t-s)} \cdot  {\mathrm d} \overline{W}_{s}^{k+1}.
\end{split}
\]
By definition, $(\overline{W}_t^{k})_{0\leq t\leq T},k\geq 1$ are independent Brownian motions and the variance of $\overline{W}_{s}^{k+1}$ is $\dfrac{s}{d^k}$. Then similar to Appendix \ref{appendix3} and \ref{appendixvariance}, we have
\[
\text{Var} ( \overline{X}_t^1)\, =\,  \sum\limits_{k=0}^{\infty} \int^{t}_{0} \frac{\,(t-s)^{4k}\,}{\,(k!)^{2} \,} \lvert  \rho_{k} (- (t-s)^{2}) \rvert^{2} e^{-2(t-s)}\cdot\frac{1}{d^k}{\mathrm d} s 
\]
\[
{\, =\, \sum_{k=1}^{\infty}  \int^{t}_{0} \frac{\,2\,}{\,\pi\,}  \frac{\,\nu^{2k+1}\,}{\,(k!)^{2}\, 4^{k}\,} \big( K _{k-(1/2)}(\nu) \big)^{2} \cdot\frac{1}{d^k}{\mathrm d} \nu + \frac{\,1 - e^{-2t}\,}{\,2\,} \, ; \quad t \ge 0 \, . } 
\]
And 
\begin{equation*}
\begin{split}
\lim_{t\to \infty} \text{Var} ( \overline{X}_t^1) \, &=\,  \frac{\,1\,}{\,2\,} + \sum_{k=1}^{\infty} \int^{\infty}_{0} \frac{\,2\,  s^{2k+1}\,}{\,\pi (k!)^{2} 4^{k}\,} \cdot [ K_{k-(1/2)}(s)]^{2} \cdot\frac{1}{d^k}{\mathrm d} s 
\\
\,& =\, \frac{\,1\,}{\,2\,} + \frac{1}{2} \sum_{k=1}^{\infty}   {4k \choose 2k} \frac{1}{2k+1} \frac{1}{2^{4k}d^k} \,=\, \frac{1}{2} \sum_{k=0}^{\infty}   {4k \choose 2k} \frac{1}{2k+1} \frac{1}{(2d^{1/4})^{4k}} \,\\
\, &=\,  \frac{\,1\,}{\,2\,}\, \cdot \, \frac{\sqrt{2}}{4}\,2d^{1/4}\,\sqrt{4d^{1/2}-\sqrt{16d-16}}\\
 \, &=\,  \frac{\,\sqrt{2}\,}{\,2\,}\,d^{1/4}\sqrt{\sqrt{d}-\sqrt{d-1}}
=\,  \frac{\,\sqrt{2}\,}{\,2\,}\,\dfrac{d^{1/4}}{\sqrt{\sqrt{d}+\sqrt{d-1}}}
 \\
  \, &=\,\frac{\,\sqrt{2}\,}{\,2\,}\, \Big( 1 + \sqrt{ \frac{\,d-1\,}{\,d\,}}\Big)^{-1/2}
  \in \Big (\frac{1}{2}, \frac{\,\sqrt{2}\,}{\,2\,} \Big].
\end{split}
\end{equation*} 
The limit is monotone in $d$, it achieves maximum of $\frac{\,\sqrt{2}\,}{\,2\,}$ when $d=1$.

\newpage
\bibliographystyle{acm.bst}
\bibliography{references}

\begin{thebibliography}{1}

\bibitem{CarmonaFouqueSunSystemicRisk}
{\sc {Carmona}, R., {Fouque}, J.-P., and {Sun}, L.-H.}
\newblock {Mean Field Games and Systemic Risk}.
\newblock {\em Communications in Mathematical Sciences 13}, 4 (2015), 911--933.

\bibitem{delarue:hal-01457409}
{\sc Delarue, F.}
\newblock {Mean Field Games: A Toy Model On An Erdos-Renyi Graph}.
\newblock In {\em {Journ{\'e}es MAS 2016 de la SMAI -- Ph{\'e}nom{\`e}nes
  complexes et h{\'e}t{\'e}rog{\`e}nes.}\/} (Grenoble, France, 2017), vol.~60
  of {\em ESAIM: Procs}.

\bibitem{Nils-JP-Ichiba2018DirectedChain}
{\sc {Detering}, N., {Fouque}, J.-P., and {Ichiba}, T.}
\newblock {Directed Chain Stochastic Differential Equations}.
\newblock {\em Stochastic Processes and Their Applications 130}, 4 (2020),
  2519--2551.

\bibitem{ConvNashtoMFGlimit}
{\sc {Lacker}, D.}
\newblock {On the convergence of closed-loop Nash equilibria to the mean field
  game limit}.
\newblock {\em arXiv e-prints\/} (Aug 2018), arXiv:1808.02745.

\bibitem{LargeNetworkof_InteractingDiffusions}
{\sc {Lacker}, D., {Ramanan}, K., and {Wu}, R.}
\newblock {Large sparse networks of interacting diffusions}.
\newblock {\em arXiv e-prints\/} (Apr 2019), arXiv:1904.02585.

\bibitem{VanghanIEEE69}
{\sc {Vaughan}, D.}
\newblock A negative exponential solution for the matrix riccati equation.
\newblock {\em IEEE Transactions on Automatic Control 14}, 1 (February 1969),
  72--75.

\end{thebibliography}

\end{document}